\newcommand{\Haus}{\dim_{\mathrm{H}}}
\numberwithin{equation}{section}
\numberwithin{figure}{section}
\newtheorem*{thm*}{Theorem}
\newtheorem*{conj*}{Conjecture}
\newtheorem*{ques*}{Question}
\newtheorem*{rem*}{Remark}
\newtheorem*{defn*}{Definition}
\newtheorem*{mainques*}{Main questions}
\newtheorem{thm}{Theorem}[section]
\newtheorem{lem}[thm]{Lemma}
\newtheorem{cor}[thm]{Corollary}
\newtheorem{conj}[thm]{Conjecture}
\theoremstyle{definition}
\newtheorem{defn}[thm]{Definition}
\newtheorem{rem}[thm]{Remark}
\newtheorem{ex}[thm]{Example}
\def\supp{\mathrm{supp}}
\def \dist {{\mathrm{dist}}}
\def \bxi {{\boldsymbol{\xi}}}
\def \bN {{\mathbb N}}
\def \bR {{\mathbb R}}
\def \bZ {{\mathbb Z}}
\def \cM {{\mathcal{M}}}
\def \cR {{\mathcal{R}}}
\def \cS {{\mathcal{S}}}
\def \del {{\delta}}
\def \eps {{\varepsilon}}
\def \epsilon {{\varepsilon}}
\def \lam {{\lambda}}
\def \bk {{\mathbf{k}}}
\def \bt {{\mathbf{t}}}
\def \bzero {{\boldsymbol{0}}}
\def \d {{\mathrm{d}}}
\def \bN {\mathbb N}
\def \bP {\mathbb P}
\def \bR {\mathbb R}
\def \bZ {\mathbb Z}
\def \ba {\mathbf a}
\def \bd {\mathbf d}
\def \bi {\mathbf i}
\def \bk {\mathbf k}
\def \bn {\mathbf n}
\def \bt {\mathbf t}
\def \bx {\mathbf x}
\def \by {\mathbf y}
\def \bz {\mathbf z}
\def \bzero {\mathbf 0}
\def \bgam {{\boldsymbol{\gam}}}
\def \bxi {{\boldsymbol{\xi}}}
\def \bgam {\boldsymbol{\gamma}}
\def \fd {\mathfrak d}
\def \fB {\mathfrak B}
\def \fC {\mathfrak C}
\def \cB {\mathcal B}
\def \cD {\mathcal D}
\def \cE {\mathcal E}
\def \cF {\mathcal F}
\def \cK {\mathcal K}
\def \cM {\mathcal M}
\def \cN {\mathcal N}
\def \cR {\mathcal R}
\def \cS {\mathcal S}
\def \cU {\mathcal U}
\def \le {\leqslant}
\def \leq {\leqslant}
\def \ge {\geqslant}
\def \geq {\geqslant}
\def \dim {\mathrm{dim}}
\def \dimh {{\mathrm{\dim_H}}}
\def \supp {{\mathrm{supp}}}
\def \d {{\mathrm{d}}}
\def \ds1 {\mathds{1}}
\def \del {{\delta}}
\def \eps {{\varepsilon}}
\def \kap {{\kappa}}
\def \lam {{\lambda}}
\title[Diophantine approximation on fractals]{Simultaneous and multiplicative Diophantine approximation on missing-digit fractals}
\author{Sam Chow}
\address{Sam Chow, Mathematics Institute, Zeeman Building, University of Warwick, Coventry CV4 7AL, UK}
\curraddr{}
\email{Sam.Chow@warwick.ac.uk}
\author{Han Yu}
\address{Han Yu, Mathematics Institute, Zeeman Building, University of Warwick, Coventry CV4 7AL, UK}
\curraddr{}
\email{han.yu.2@warwick.ac.uk}
\thanks{}
\subjclass[2020]{11J83 (primary); 28A80, 42B05 (secondary)}
\keywords{Diophantine approximation, Littlewood's conjecture, fractals, Fourier series}
\begin{document}

\makeatletter
\providecommand\@dotsep{5}
\makeatother

\begin{abstract}
We investigate the metric theory of Diophantine approximation on missing-digit fractals. In particular, we establish analogues of Khinchin's theorem and Gallagher's theorem, as well as inhomogeneous generalisations.
\end{abstract}

\maketitle

\section{Introduction}

\subsection{Two foundational results in metric Diophantine approximation}

We begin by recalling the foundational results of Khinchin and Gallagher. We write $\lam_k$ for $k$-dimensional Lebesgue measure, and write $\| x \|$ for the distance between a real number $x$ and the set of integers. For $\psi: \bN \to [0,1)$, we denote by $W_k^\times(\psi)$ the set of $\bx = (x_1, \ldots, x_k) \in [0,1)^k$ such that
\[
\| n x_1 \| \cdots \| n x_k \| < \psi(n)
\]
has infinitely many solutions $n \in \bN$. Similarly, we denote by $W_k(\psi)$ the set of $\bx \in [0,1)^k$ such that
\[
\max\{ \| n x_1 \|, \dots ,\| n x_k \|\} < \psi(n)
\] 
holds for infinitely many $n \in \bN$. 

\begin{thm}[Khinchin's theorem \cite{Khi1924, Khi1926}]
Let $\psi: \bN \to [0,1)$ be non-increasing, and let $k \in \bN$. Then
\[
\lam_k(W_k(\psi))
= \begin{cases}
0, &\text{if } \displaystyle
\sum_{n=1}^\infty 
\psi(n)^k
< \infty \\ \\
1, &\text{if } \displaystyle
\sum_{n=1}^\infty 
\psi(n)^k
= \infty.
\end{cases}
\]
\end{thm}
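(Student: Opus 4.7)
The plan is to split the argument into the (easier) convergence case and the (harder) divergence case, both built around the approximation sets
\[
A_n := \{\bx \in [0,1)^k : \max_{1 \leq i \leq k} \|nx_i\| < \psi(n)\},
\]
so that $W_k(\psi) = \limsup_{n \to \infty} A_n$. After truncating $\psi$ to $[0,1/2]$ (which affects neither the set $W_k(\psi)$ nor the convergence of $\sum \psi(n)^k$ up to a constant factor), a Fubini computation gives $\lam_k(A_n) = (2\psi(n))^k$.

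For the convergence case, if $\sum_n \psi(n)^k < \infty$, then $\sum_n \lam_k(A_n) < \infty$, and the Borel--Cantelli lemma delivers $\lam_k(W_k(\psi)) = 0$ at once. No monotonicity is used here.

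For the divergence case, I would combine a zero-one law with a divergence Borel--Cantelli estimate. The zero-one law (Cassels' theorem) asserts that $\lam_k(W_k(\psi)) \in \{0,1\}$; this follows because $W_k(\psi)$ is invariant under the ergodic action of $\QQ^k/\ZZ^k$ on $[0,1)^k$ by translations. To rule out the value $0$, I would apply the second-moment form of Borel--Cantelli: if one can prove the quasi-independence bound
\[
\sum_{m, n \leq N} \lam_k(A_m \cap A_n) \ll \Bigl(\sum_{n \leq N} \lam_k(A_n)\Bigr)^2
\]
for all sufficiently large $N$ along the subsequence where the partial sums of $(2\psi(n))^k$ tend to infinity, then $\lam_k(\limsup A_n) > 0$, forcing $\lam_k(W_k(\psi)) = 1$ by the zero-one law.

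The main obstacle is establishing this quasi-independence estimate. Controlling $\lam_k(A_m \cap A_n)$ for $m \neq n$ reduces, after parametrising by $\ell = \lcm(m,n)$ and counting admissible residues of each coordinate modulo $\ell$, to a $\gcd$-type sum. The off-diagonal contribution yields the product $\lam_k(A_m) \lam_k(A_n)$ up to a constant, while the behaviour of the pairs with $\gcd(m,n)$ large is precisely where the monotonicity hypothesis on $\psi$ becomes essential: a dyadic decomposition lets one pair each ``bad'' pair $(m,n)$ with the smaller of $\psi(m), \psi(n)$, and monotonicity prevents pathological accumulations. Without this input, the analogue fails already for $k=1$ (Duffin--Schaeffer counterexample). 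Tightening the bookkeeping at this step is the bulk of the work; for $k \geq 2$ there is extra room coming from the extra powers of $\psi$, which is why Gallagher was later able to dispense with monotonicity in higher dimensions.
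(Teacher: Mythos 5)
The paper does not prove this statement: Khinchin's theorem is quoted as classical background with citations to \cite{Khi1924, Khi1926}, so there is no in-paper proof to compare against. Judged on its own, your convergence half is complete and correct (Borel--Cantelli after the harmless truncation $\psi\le 1/2$; no monotonicity needed). The divergence half, however, has a genuine gap, and it sits exactly where you wrote ``tightening the bookkeeping at this step is the bulk of the work.'' The quasi-independence bound
\[
\sum_{m,n\le N}\lam_k(A_m\cap A_n)\ll\Bigl(\sum_{n\le N}\lam_k(A_n)\Bigr)^2
\]
for the \emph{full} sequence $(A_n)$ is not a bookkeeping matter: for $k=1$ it is false in general even for non-increasing $\psi$. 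The standard overlap estimate gives $\lam_1(A_m\cap A_n)\ll\psi(m)\psi(n)+\gcd(m,n)\min(\psi(m)/m,\psi(n)/n)$, and summing the error term using monotonicity yields $\sum_{n\le N}\frac{\psi(n)}{n}\sum_{d\mid n}d\,\phi(n/d)$, which in the worst case is of size $\sum_{n\le N}\tau(n)\psi(n)$. For slowly diverging $\psi$ (e.g.\ $\psi(n)\asymp(n\log n\log\log n)^{-1}$) this grows like $\log N/\log\log N$ while $\bigl(\sum_{n\le N}\psi(n)\bigr)^2$ grows like $(\log\log\log N)^2$, so the second-moment method applied naively to $(A_n)$ cannot close. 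The actual proofs of the divergence case are structurally different: continued fractions (Khinchin's original argument), the ubiquity/mass-transference framework of Beresnevich--Dickinson--Velani, or a regularisation of $\psi$ combined with quasi-independence along a restricted subsequence --- the kind of reduction this paper itself performs in Example \ref{SimOK} and Remark \ref{AssumeUpper}. Your sketch does not identify which of these mechanisms you intend, and ``pairing each bad pair with the smaller of $\psi(m),\psi(n)$'' does not resolve the divisor-sum obstruction.

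A secondary issue: your justification of the zero-one law is wrong as stated. $W_k(\psi)$ is not invariant under translation by $\ba/q$, since $\|n(x_i+a_i/q)\|$ differs from $\|nx_i\|$ unless $q\mid na_i$, and the infinitely many good $n$ for $\bx$ need not be divisible by $q$. Cassels' and Gallagher's zero-one laws are true and may of course be cited, but their proofs go through the maps $x\mapsto px$ and a density argument, not through ergodicity of rational translations.
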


\begin{thm}[Gallagher's theorem \cite{Gal1962}]
Let $\psi: \bN \to [0,1)$ be non-increasing, and let $k \in \bN$. Then
\[
\lam_k(W_k^\times(\psi))
= \begin{cases}
0, &\text{if } \displaystyle
\sum_{n=1}^\infty 
\psi(n) (\log n)^{k-1} 
< \infty \\ \\
1, &\text{if } \displaystyle
\sum_{n=1}^\infty 
\psi(n) (\log n)^{k-1} 
= \infty.
\end{cases}
\]
\end{thm}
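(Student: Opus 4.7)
\emph{Proof proposal.} The plan is to handle the two halves separately via Borel--Cantelli applied to the sets
\[
E_n = \{ \bx \in [0,1)^k : \| n x_1 \| \cdots \| n x_k \| < \psi(n) \}.
\]

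For the convergence half, I would first estimate $\lam_k(E_n)$. By $1/n$-periodicity in each coordinate, it equals the measure of the corresponding set in $[-1/(2n), 1/(2n)]^k$, which after the rescaling $u_i = n x_i$ becomes $\{ \bu \in [-1/2, 1/2]^k : |u_1 \cdots u_k| < \psi(n) \}$. A standard inductive integration yields the bound $\lam_k(E_n) \le C_k\, \psi(n)\, (\log(1/\psi(n)))^{k-1}$. Splitting according to whether $\psi(n) \ge n^{-2}$, and using that $x \mapsto x(\log(1/x))^{k-1}$ is increasing near zero, converts this into $\lam_k(E_n) \ll_k \psi(n)(\log n)^{k-1} + n^{-2}(\log n)^{k-1}$. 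Summing against the convergence hypothesis and applying Borel--Cantelli then delivers $\lam_k(W_k^\times(\psi)) = 0$.

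For the divergence half, a direct Borel--Cantelli fails because the $E_n$ are strongly correlated. One natural route is slicing: fix $(x_1, \dots, x_{k-1})$ and view $x_k$ as free. Membership in $W_k^\times(\psi)$ then becomes a one-dimensional approximation condition for $x_k$ with rate function $\tilde\psi(n) = \min\{1, \psi(n)/(\|nx_1\|\cdots\|nx_{k-1}\|)\}$. Two ingredients are required: (i) a one-dimensional Khinchin-type statement valid for the non-monotonic $\tilde\psi$, and (ii) divergence of $\sum_n \tilde\psi(n)$ for almost every $(x_1, \dots, x_{k-1})$. Input (ii) follows from the almost-sure estimate
\[
\sum_{n \le N} \frac{1}{\|nx_1\| \cdots \|nx_{k-1}\|} \gg N (\log N)^{k-1},
\]
proved by induction on $k$ using Fubini and the classical $L^2$ fact that $\sum_{n \le N} 1/\|nx\| \asymp N \log N$ for almost every $x$.

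The main obstacle is input (i): the sliced function $\tilde\psi$ has no monotonicity, so Khinchin's theorem does not apply directly. One can address this by appealing to the Duffin--Schaeffer theorem in dimension one, or by avoiding slicing entirely and running a second-moment / quasi-independence argument for the sets $E_n$ directly in $k$ dimensions, combined with a zero-one law coming from the shift-invariance of $W_k^\times(\psi)$ under rational translations (Gallagher's original approach). Either way, the technical heart of the matter is a decorrelation estimate of the form $\lam_k(E_n \cap E_m) \ll \lam_k(E_n)\,\lam_k(E_m)$ on average over $n,m \le N$, with a logarithmic saving sharp enough to match the conjectured measure.
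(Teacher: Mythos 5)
This statement is Gallagher's theorem, which the paper quotes from \cite{Gal1962} without proof, so there is no internal argument to compare against; I assess your proposal on its own terms. Your convergence half is correct and standard: the volume estimate $\lam_k(E_n) \ll_k \psi(n)\bigl(\log(1/\psi(n))\bigr)^{k-1}$, the split according to whether $\psi(n)\ge n^{-2}$, and the first Borel--Cantelli lemma together give the measure-zero conclusion (and, as your argument implicitly shows, monotonicity of $\psi$ is not needed for this half).

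The divergence half has a genuine gap, in two places. First, in the slicing route, your input (ii) only yields divergence of the \emph{untruncated} sum $\sum_n \psi(n)\prod_{i<k}\|nx_i\|^{-1}$, whereas what the one-dimensional reduction requires is divergence of $\sum_n \min\bigl\{1,\, \psi(n)\prod_{i<k}\|nx_i\|^{-1}\bigr\}$. The truncation genuinely matters: for a.e.\ $x$ the partial sums $\sum_{n\le N}\|nx\|^{-1}$ are, along subsequences, dominated by a handful of exceptionally large terms, each of which contributes only $1$ after truncation; passing from the untruncated to the truncated divergence is exactly the difficulty addressed at length in \cite{BHV2020}. (Relatedly, $\sum_{n\le N}\|nx\|^{-1}\asymp N\log N$ a.e.\ is false in the upper-bound direction; only the lower bound holds, which is admittedly all you use.) Your input (i) is also not settled by invoking Duffin--Schaeffer, since the sliced problem concerns approximation by all fractions $a/n$ rather than reduced ones. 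Second, your alternative route --- a zero-one law combined with a second-moment/quasi-independence estimate for the $E_n$ --- is indeed the correct one (it is essentially Gallagher's, and it is the route underlying the present paper's framework via Theorem \ref{LebesgueEstimate} and Lemma \ref{GenDiv}), but you have only named the required decorrelation estimate $\sum_{m,n\le N}\lam_k(E_m\cap E_n)\le (1+o(1))E_N(\lam)^2+O(1)$ without proving it. Establishing that estimate --- via the dyadic decomposition of each $E_n$ into hyperbolic rectangles and a divisor-type count of the overlaps of the $1/m$- and $1/n$-periodic pieces --- is the technical heart of the divergence half, and it is absent from the proposal.
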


Khinchin's theorem gives the simultaneous approximation rate of a generic vector, whilst Gallagher's theorem gives the generic multiplicative approximation rate. The latter is a strong form of a famous conjecture of Littlewood, except that the approximation rate is only valid for almost all vectors.

\begin{conj}
[Littlewood's conjecture in $k$ dimensions]
Let \mbox{$\bx \in \bR^k$,} where $k \ge 2$. Then
\[
\liminf_{n \to \infty}
n \| n x_1 \| \cdots \| n x_k \| = 0.
\]
\end{conj}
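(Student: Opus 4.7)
The plan is to reduce first to the case $k=2$: if $\liminf_{n \to \infty} n \|nx_1\|\|nx_2\| = 0$ for every $(x_1,x_2) \in \bR^2$, then the trivial bound $\|nx_j\| \le 1/2$ for $j \ge 3$ immediately settles the general case. With this reduction in hand, I would translate the problem into homogeneous dynamics on $X = \mathrm{SL}_3(\bR)/\mathrm{SL}_3(\bZ)$. Let $A$ denote the full diagonal subgroup of $\mathrm{SL}_3(\bR)$, let $A^+$ be the positive Weyl chamber cut out by the inequalities $a_1 \ge a_2 \ge a_3$ on $\diag(e^{a_1}, e^{a_2}, e^{a_3})$, and set
\[
u(\bx) = \begin{pmatrix} 1 & 0 & x_1 \\ 0 & 1 & x_2 \\ 0 & 0 & 1 \end{pmatrix} \mathrm{SL}_3(\bZ) \in X.
\]
A standard application of Mahler's compactness criterion (the Dani correspondence) shows that $\liminf_{n \to \infty} n \|nx_1\|\|nx_2\| = 0$ if and only if the forward orbit $A^+ \cdot u(\bx)$ is unbounded in $X$.

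Assuming for contradiction that this orbit is bounded, its closure supports at least one $A$-invariant probability measure $\mu$ on $X$ (pick any weak-$*$ limit of orbit averages over a F\o{}lner sequence in $A^+$). The next step is to invoke the measure classification theorem of Einsiedler--Katok--Lindenstrauss: any $A$-invariant, $A$-ergodic probability measure on $X$ whose entropy with respect to some one-parameter subgroup of $A$ is positive must be the $\mathrm{SL}_3(\bR)$-invariant Haar measure on $X$. Since Haar measure is not supported on a compact set, this would contradict boundedness \emph{provided} one can verify that some ergodic component of $\mu$ has positive entropy in at least one direction.

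The main obstacle, and the reason Littlewood's conjecture remains open, is precisely the production of positive entropy. A priori, a bounded $A^+$-orbit can limit onto an $A$-invariant measure of entropy zero, concentrated on a lower-dimensional compact $A$-invariant subset such as a closed orbit of the centralizer of a singular one-parameter subgroup of $A$. My plan would be to supplement the dynamical argument with geometric control on the excursions of $A^+ \cdot u(\bx)$: using the correspondence between cusp excursions and Diophantine behaviour, one would try to bound from below the Minkowski or Hausdorff dimension of the projection of the orbit closure onto the unstable foliation of a regular element of $A$, thereby forcing positive entropy via the variational principle of Ledrappier--Young. Executing this step unconditionally for \emph{every} $\bx$ is the genuine content of the conjecture; the Einsiedler--Katok--Lindenstrauss theorem accomplishes it outside an exceptional set of Hausdorff dimension zero, but extending it to all $\bx$ lies beyond current techniques.
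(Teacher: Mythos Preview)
The statement you are attempting to prove is labelled a \emph{conjecture} in the paper, and the paper does not prove it; Littlewood's conjecture remains open. The paper only mentions it to contextualise Gallagher's theorem, which gives the multiplicative approximation rate for \emph{almost all} vectors rather than all vectors. There is therefore no proof in the paper against which to compare your proposal.

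That said, your proposal is an accurate and honest summary of the state of the art. The reduction to $k=2$, the Dani correspondence, and the invocation of the Einsiedler--Katok--Lindenstrauss measure classification are all correct, and you have correctly isolated the genuine obstruction: one cannot at present rule out that the $A$-invariant measure supported on a bounded orbit closure has zero entropy for every one-parameter subgroup of $A$. Your suggested workaround --- forcing positive entropy via dimension estimates on the unstable projection and Ledrappier--Young --- is a reasonable heuristic, but as you yourself concede in the final sentence, carrying this out for \emph{every} $\bx$ is precisely the content of the conjecture and is not known. So the proposal is not a proof, and you have said as much; it is a well-informed explanation of why no proof currently exists.
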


\subsection{Diophantine approximation on fractals}
\label{sec: DAfrac}

Now that we have seen the classical results of Khinchin and Gallagher, it is natural to ask what happens if one replaces Lebesgue measure with some other measure. A probability measure $\mu$ is \emph{Khinchin} if Khinchin's theorem holds with $\mu$ in the place of $\lambda_k$, and \emph{Gallagher} if Gallagher's theorem holds with $\mu$ in the place of $\lambda_k.$ 
The philosophy is that a `natural' probability measure $\mu$ should be Khinchin and Gallagher, unless there is a simple obstruction. 

A natural class of measures is given by induced Lebesgue measures on manifolds. 
The following landmark result is the culmination of decades of devoted research; see the major breakthroughs by Kleinbock and Margulis~\cite{KM1998} and by Beresnevich \cite{Ber2012}, as well as \cite{BDV2007, BVVZ2017, BVVZ2021, Ber1977, DRV1991, VV2006}.

\begin{thm}[Beresnevich--Yang \cite{BY2023}, Beresnevich--Datta \cite{BD}]
\label{DreamTheorem}
Let $\mathcal{M}$ be a non-degenerate submanifold of $\mathbb{R}^k$, where $k\geq 2.$ Let $\cU$ be an open subset of $\cM$, and let $\mu$ be a smooth probability measure supported on $\cU \cap [0,1]^k$. Then $\mu$ is Khinchin.
\end{thm}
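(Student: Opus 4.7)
The plan is to prove the convergence and divergence halves separately, both resting on sharp counting of rationals that lie near the manifold $\cM$. One first reduces to a local chart: parametrise a small piece of $\cU$ as the graph of a non-degenerate map $\bF : U \to \bR^{k-d}$ on an open set $U \subseteq \bR^d$ (with $d = \dim \cM$), and push $\mu$ forward to a measure on $U$ that is absolutely continuous with respect to $\lam_d$ with density bounded above and below on compacts. A covering argument will eventually promote the local statement into one holding on all of $\cU$, so the entire argument may be conducted inside a single chart.

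The convergence direction is the easier of the two. I would invoke the quantitative non-divergence estimates of Kleinbock--Margulis \cite{KM1998} and their descendants, which yield sharp upper bounds for the measure of those $\bu \in U$ for which $\max \{ \| q u_i \|, \| q F_j(\bu) \| \} < \psi(q)$ admits a solution with $q$ in a dyadic range $[Q, 2Q]$. Telescoping these bounds over dyadic $Q$ and applying the convergence Borel--Cantelli lemma closes out the case $\sum_n \psi(n)^k < \infty$; the extra log factors that appear in the manifold setting can be absorbed by the monotonicity of $\psi$.

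The divergence direction is the main obstacle, and it is here that the recent breakthroughs of Beresnevich--Yang \cite{BY2023} and Beresnevich--Datta \cite{BD} are decisive. I would adopt the ubiquity framework of Beresnevich--Dickinson--Velani: show that, for a positive proportion of $\bu \in U$, a rational point near $(\bu, \bF(\bu))$ at the expected scale can be located with denominator in each dyadic interval. This reduces to a sharp \emph{lower} bound for the number of rationals $\bp/q$ with $q \asymp Q$ lying within the expected distance of $\cM$, together with a spacing estimate preventing the corresponding neighbourhoods from clumping. Producing such a bound for an arbitrary non-degenerate $\cM$ is exactly the content of the Beresnevich--Yang counting theorem; it lies much deeper than the Kleinbock--Margulis upper bounds and requires genuinely new geometric and Fourier-analytic input. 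This ubiquitous rational point count is the crux of the whole argument.

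Once ubiquity has been established on charts, the conclusion follows by the now-standard mechanism: combine ubiquity with the divergent Borel--Cantelli lemma of Beresnevich--Velani to deduce $\mu(W_k(\psi)) = 1$ whenever $\sum_n \psi(n)^k = \infty$, and patch via a countable cover of $\cU$ by charts. The decisive difficulty is therefore the sharp two-sided count of rationals near a non-degenerate manifold at all scales; the probabilistic and ubiquity machinery converting such counts into a Khinchin dichotomy is, by now, a black box one may invoke.
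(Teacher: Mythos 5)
This statement is quoted background: it is Theorem \ref{DreamTheorem}, attributed to Beresnevich--Yang \cite{BY2023} and Beresnevich--Datta \cite{BD}, and the paper gives no proof of it, so there is no internal argument to compare yours against. Judged on its own terms, your write-up is a roadmap rather than a proof: both halves are reduced precisely to the results whose proofs constitute the theorem, and you declare them black boxes. That is acceptable as a literature summary, but it does not establish anything.

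There is also a concrete inaccuracy in the convergence half. Quantitative non-divergence in the sense of Kleinbock--Margulis \cite{KM1998} yields (strong) extremality, i.e.\ the measure-zero statement for the special approximating functions $\psi_\tau$ with $\tau > 1/k$; it does not give the convergence case of Khinchin's theorem for a general non-increasing $\psi$. The latter requires sharp \emph{upper} bounds for the number of rational points $\ba/q$, $q \le Q$, lying within distance $\delta/Q$ of the manifold for $\delta$ well below the Dirichlet threshold $Q^{-1/k}$ --- a deep counting problem in its own right, solved for planar curves in \cite{VV2006}, for hypersurfaces in \cite{Hua2020}, and in general only in the forthcoming work \cite{BD}. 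This is exactly why the convergence half of the theorem carries the Beresnevich--Datta attribution, and it is the analogue of the counting problem the present paper must solve in the fractal setting (Theorem \ref{thm: Rational Counting}). So the claim that the convergence direction is ``the easier of the two'' and follows by telescoping Kleinbock--Margulis estimates is wrong; both directions hinge on two-sided rational point counts near $\cM$ at sub-Dirichlet scales, and neither is available off the shelf from the 1998 technology.
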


For the Gallagher property, much less is known. Badziahin--Levesley~\cite{BL2007} solved the convergence theory for planar curves, subject to curvature. For affine hyperplanes, see \cite{BHV2020, Cho2018, CT2020, CT2024, CY2024, Hua2024}, and finally \cite{CY}, where the problem was largely solved when $k \ge 9$.
For our purposes, the most relevant result is as follows.

\begin{thm}
[Chow--Yu \cite{CY}]
Let $\mathcal{M}$ be a hypersurface in $\mathbb{R}^k$, where $k\geq 5.$ Let $\cU$ be an open subset of $\cM$, in which the Gaussian curvature is non-zero, and let $\mu$ be a smooth probability measure supported on $\cU \cap [0,1]^k$. 
Then $\mu$ is Gallagher.
\end{thm}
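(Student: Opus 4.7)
The plan is to reduce the problem to a Fourier-analytic estimate on $\bT^k$, leveraging the fact that a smooth density on a curved hypersurface enjoys strong Fourier decay. Concretely, under the stated curvature assumption, the classical stationary-phase / Littman bound gives $|\hat \mu(\bxi)| \lesssim (1+|\bxi|)^{-(k-1)/2}$ for all $\bxi \in \bZ^k$. The point is to feed this decay into the analysis of the sets
$$
A_n := \{\bx \in [0,1)^k : \|n x_1\| \cdots \|n x_k\| < \psi(n)\},
$$
whose $\limsup$ is precisely $W_k^\times(\psi)$, in the same spirit as the Fourier-analytic proofs of Gallagher's theorem on $\bT^k$.

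For the convergence half, I would approximate $\mathbf{1}_{A_n}$ from above by a smooth function $F_n$ on $\bT^k$ whose Fourier expansion is supported on $n\bZ^k$ (which is natural because $A_n$ is $1/n$-periodic in each coordinate). Parseval then yields
$$
\mu(A_n) \,\le\, \int F_n \, \rd \mu \,=\, \lam_k(A_n) \,+\, \sum_{\bzero \ne \bm \in n\bZ^k} \hat F_n(\bm)\, \hat \mu(-\bm),
$$
where the main term is $\asymp \psi(n)(\log n)^{k-1}$ by the standard hyperbolic-cross estimate. Bounding the Fourier coefficients of the indicator of a hyperbolic region against the decay of $\hat \mu$, one obtains an error of size $n^{-\eta}$ for some $\eta > 0$ once $k \ge 5$. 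Summing over $n$ and applying Borel--Cantelli concludes the convergence case.

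For the divergence half, the route is a quantitative quasi-independence estimate
$$
\sum_{m,n \le N} \mu(A_m \cap A_n) \,\le\, C \Big( \sum_{n \le N} \mu(A_n) \Big)^{\!2},
$$
combined with the Chung--Erd\H{o}s divergent Borel--Cantelli lemma to force $\mu(W_k^\times(\psi)) > 0$, and then a zero-one argument (via the ergodicity of integer dilations on $\bT^k$ applied after pulling back to the Fourier side) to upgrade positive $\mu$-measure to full $\mu$-measure. The pairwise estimate is again established by Fourier expansion: writing $\mathbf{1}_{A_m} \mathbf{1}_{A_n}$ as a double Fourier series on $\bT^k$, the constant term contributes $\lam_k(A_m \cap A_n)$, which factorises through Gallagher's hyperbolic independence lemma, and the remaining frequencies are controlled through $|\hat\mu|$.

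The main obstacle, and the reason for the dimensional threshold $k \ge 5$, is the logarithmic structure of the hyperbolic region $A_n$: its Fourier coefficients decay only mildly along the coordinate directions, and one has to recoup the losses via the curvature-driven gain $|\hat\mu(\bm)| \lesssim |\bm|^{-(k-1)/2}$. A careful dyadic decomposition of the frequency variable, separating directions aligned with a coordinate axis from generic ones, is needed to squeeze a summable bound out of the Fourier tail in both the convergence and the quasi-independence estimate. Balancing these dyadic losses against the polynomial Fourier decay is the technical heart of the proof and is precisely what forces $(k-1)/2$ to be large enough, i.e.\ $k \ge 5$.
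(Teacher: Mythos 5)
Your overall skeleton is close to what Chow--Yu actually do: the curvature-driven decay $|\hat\mu(\bxi)|\ll(1+|\bxi|)^{-(k-1)/2}$, the observation that the $n^{-1}$-periodicity of $A_n$ confines the relevant frequencies to $n\bZ^k$, a smoothed Parseval comparison of $\mu(A_n)$ with $\lam_k(A_n)$ for the convergence half (this is the expectation transference principle of \S\ref{sec: MTP}), and a second-moment quasi-independence estimate fed into a divergence Borel--Cantelli lemma for the divergence half (the variance transference principle). The dyadic decomposition of the hyperbolic region into rectangles of equal volume, and the need to control the off-diagonal frequencies $\bxi+\bxi'$ with $n\mid\bxi$, $n'\mid\bxi'$ via divisor-type counting, are exactly the technical heart you gesture at.

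There is, however, one genuine gap: your plan to upgrade $\mu(W_k^\times(\psi))>0$ to $\mu(W_k^\times(\psi))=1$ by ``ergodicity of integer dilations on $\bT^k$''. The Cassels--Gallagher zero--one law rests on the invariance of Lebesgue measure under the maps $\bx\mapsto q\bx$ and $\bx\mapsto\bx+\ba/q$; a smooth measure on a curved hypersurface is not even quasi-invariant under these maps in any usable sense (the dilated hypersurface is a different hypersurface), so no such ergodic zero--one law is available for $\mu$. The way [CY] (and the present paper, via Lemma \ref{GenDiv} and Theorem \ref{LebesgueEstimate}) circumvents this is quantitative: one chooses the bump function $w$ supported on $\{1/2\le|x|\le1\}$ so that the \emph{Lebesgue} second moment satisfies $\sum_{m,n\le N}\lam(f_mf_n)\le C\,E_N(\lam)^2+O(1)$ with $C$ arbitrarily close to $1$, and the variance transference principle shows the same holds for $\mu$ up to $o(E_N(\mu)^2)$. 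The divergence Borel--Cantelli lemma then gives $\mu(\cE_\infty)\ge 1/C$ directly, and letting $C\to1$ yields full measure with no zero--one law at all. Without this (or some substitute), your argument only delivers positive measure. A secondary, smaller point: ``the remaining frequencies are controlled through $|\hat\mu|$'' in the quasi-independence step conceals the place where the threshold $k\ge5$ is actually earned --- one must count solutions of $n\bt+n'\bt'=\bx$ with $\bt,\bt'$ ranging over the dual rectangles, and the partial-rank (parallel $\bt,\bt'$) and full-rank cases need separate divisor-bound arguments before the decay of $\hat\mu$ can be summed.
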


In this article, we investigate \emph{missing-digit measures}. These generalise the canonical measure on the middle-third Cantor set, and we will define them in \S \ref{sec: Cantor}. A motivating goal is as follows.

\begin{conj}
[Dream Theorem for missing-digit fractals]
\label{conj: main}
Let $k$ be a positive integer, and let $\mu$ be a missing-digit measure on $\mathbb{R}^k$ that is not supported on any affine hyperplane. Then $\mu$ is Khinchin and Gallagher.
\end{conj}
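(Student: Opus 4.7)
My plan is to establish the four halves (convergence and divergence, for Khinchin and Gallagher) via a Fourier-analytic approach tailored to self-similar missing-digit measures. If $\mu$ is generated by a digit set $D \subseteq \{0,1,\ldots,b-1\}^k$, its Fourier coefficients factorise as
\[
\widehat{\mu}(\bxi) = \prod_{j \ge 1} g(\bxi / b^j),
\]
where $g$ is the Fourier transform of the uniform probability measure on $D$. The non-hyperplane hypothesis prevents $g$ from being identically equal to $1$ on any nontrivial rational direction, and I would translate it via elementary harmonic analysis into polynomial Fourier decay of $\widehat{\mu}$ off a sparse set of resonant frequencies tied to the base $b$.

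For the convergence halves, apply Borel--Cantelli. The $n$-th event $E_n$ (either simultaneous or multiplicative) is a union of $\asymp n^k$ narrow tubes spaced at scale $1/n$, so controlling $\mu(E_n)$ reduces to quantifying the equidistribution of $\mu$ at scale $1/n$. Poisson summation expresses the relevant quantity in terms of sums of $|\widehat{\mu}(n\bk)|$ over $\bk \in \bZ^k$, and polynomial decay off resonances yields $\mu(E_n) \ll \psi(n)^k$ for Khinchin and $\mu(E_n) \ll \psi(n)(\log n)^{k-1}$ for Gallagher. Summation closes both convergence halves.

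For divergence, run a second-moment / quasi-independence argument: show $\sum_n \mu(E_n) = \infty$ and $\mu(E_m \cap E_n) \ll \mu(E_m) \mu(E_n)$ on average, so that the variance of $\sum_{n \le N} \one_{E_n}$ is controlled by its mean. The correlation $\mu(E_m \cap E_n)$ requires estimating $|\widehat{\mu}(m \bk_1 + n \bk_2)|$, which is considerably more delicate than the diagonal estimate. For the Khinchin divergence I would adapt the Beresnevich--Yang and Beresnevich--Datta framework behind Theorem~\ref{DreamTheorem}, substituting manifold non-degeneracy by the self-similar product structure of $\widehat{\mu}$. For Gallagher, the hyperbolic region $E_n$ must first be dyadically decomposed into simultaneous-type pieces in the spirit of \cite{CY}, and each piece handled separately.

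The principal obstacle, I expect, is \emph{resonances}: if $g(\bxi_0) = 1$ at some nonzero rational $\bxi_0$, then $\widehat{\mu}$ fails to decay along the arithmetic progression $b^j \bxi_0$, and the Fourier input is lost on an infinite set of frequencies. For small digit sets (e.g.\ the middle-thirds Cantor measure), Fourier decay is only logarithmic and many such resonances occur. The hope is that the non-hyperplane condition prevents any single resonance from being catastrophic and that the exceptional $n$ form a set of density zero, or can be removed by restricting to $n$ coprime to $b$. Handling these resonances in the higher-dimensional multiplicative (Gallagher) setting appears to be the main difficulty, and a complete proof of the conjecture may well require a renormalisation argument exploiting the self-similar identity $\mu = |D|^{-1} \sum_{\bd \in D} (S_{\bd})_{*}\mu$, where $S_{\bd}(\bx) = (\bx + \bd)/b$.
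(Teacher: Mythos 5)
The statement you are asked to prove is labelled as a \emph{conjecture} in the paper, and the paper does not prove it: it is an open problem, and the authors only establish partial results towards it (Theorem \ref{thm: main} and Theorems \ref{thm: convergence}--\ref{thm: divergence}) under the much stronger hypothesis that the Fourier $\ell^1$ dimension of $\mu$ is very close to $k$, namely \eqref{MainAssumption} or \eqref{WeakAssumption}. Your proposal correctly identifies the machinery that the authors actually use for those partial results --- Poisson summation over the lattice of frequencies divisible by $n$, a second-moment/quasi-independence argument via the divergence Borel--Cantelli lemma, and a dyadic decomposition of the hyperbolic region into rectangles for the multiplicative problem --- so as a description of the state of the art it is accurate. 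But it is not a proof of the conjecture, and the point at which it fails is exactly the point you flag as a ``hope''.

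Concretely: the step where you ``translate the non-hyperplane hypothesis into polynomial Fourier decay of $\widehat{\mu}$ off a sparse set of resonant frequencies'' is false in the generality you need. The non-hyperplane condition only guarantees that $g$ is not identically $1$ on a rational hyperplane of frequencies; it gives no quantitative lower bound on $1-g$ near the resonances, and for measures such as the middle-third Cantor measure the averaged decay $\sum_{\|\bxi\|_\infty\le Q}|\widehat{\mu}(\bxi)|$ is of order $Q^{k-\dim_{\ell^1}(\mu)}$ with $\dim_{\ell^1}(\mu)$ strictly and substantially below the Hausdorff dimension --- indeed the paper notes explicitly that condition \eqref{WeakAssumption} \emph{fails} for the middle-third Cantor set. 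Since the first-moment (ETP) estimate already consumes a full power $Q^{k-\dim_{\ell^1}(\mu)}$ against a main term of size $\psi(n)^k n^k$, the argument only closes when $\dim_{\ell^1}(\mu)$ exceeds the thresholds in \eqref{MainAssumption}/\eqref{WeakAssumption}; restricting to $n$ coprime to $b$ does not repair this, because the loss is in the $\ell^1$ average over all frequencies, not at isolated resonant progressions. The second-moment (VTP) estimate is harder still, requiring a count of solutions to the Diophantine system $n\bt+n'\bt'=\bx$, and again only succeeds under \eqref{MainAssumption}. Closing the gap between these Fourier-dimension hypotheses and the bare non-hyperplane hypothesis is precisely the open content of the conjecture, and your proposal does not supply the missing idea.
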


\begin{rem} The so-called Dream Theorem was introduced in a survey article \cite{BRV2020}, referring to Theorem \ref{DreamTheorem} before it was proved. Conjecture \ref{conj: main} is analogous, but also includes the Gallagher property. We would expect it to extend to self-similar measures \cite{Mat2015}.
\end{rem}

A measure on $\mathbb{R}^k$ is \emph{split} if it is a Cartesian product of Borel probability measures on $\mathbb{R}$. Our main result is as follows.

\begin{thm}
[Main theorem]
\label{thm: main}
Let $\mu$ be a Borel probability measure on $\mathbb{R}^k.$ If 
\begin{equation}
\label{MainAssumption}
\dim_{\ell^1}(\mu) > k - \frac{k-1}{k+1},
\end{equation}
then $\mu$ is Khinchin. If moreover
$
\mu = \mu_1 \times \cdots \times \mu_k
$
is split, and
\begin{equation}
\label{SplitAssumption}
\dim_{\ell^1}(\mu_j) > 1 - \frac1{k+1} \qquad (1 \le j \le k),
\end{equation}
then it is Gallagher.
\end{thm}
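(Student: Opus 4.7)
My plan is to attack both the Khinchin and Gallagher statements via the second-moment method, exploiting Fourier decay properties of missing-digit measures. Write $W_k(\psi)$ as the $\limsup$ of the events $E_n = \{\bx \in [0,1]^k : \|n x_i\| < \psi(n) \text{ for all } i\}$. The convergence half is routine Borel--Cantelli: one needs only that $\mu(E_n)$ is comparable to $\psi(n)^k$ on average. The divergence half reduces to controlling pairwise correlations
\[
\sum_{n, m \le N} \bigl(\mu(E_n \cap E_m) - \mu(E_n)\mu(E_m)\bigr)
\]
against $\bigl(\sum_{n \le N} \mu(E_n)\bigr)^2$, so the whole game is Fourier-analytic: encode the indicator of $E_n$ as a Fourier series and control the off-diagonal via $\dim_{\ell^1}(\mu)$.

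For the Khinchin part, I would expand $\mathbf{1}_{E_n}(\bx)$ in Fourier series, isolating the principal term $(2\psi(n))^k$ and bounding the non-principal terms by $|\hat\mu(n \bm)|$ against a Fej\'er-type kernel. The assumption $\dim_{\ell^1}(\mu) > k - \tfrac{k-1}{k+1}$ should give just enough aggregate decay (summed over frequencies $\bm$) to absorb the off-diagonal contributions in a quantitative Borel--Cantelli. Standard reductions let one first treat $\psi$ of the form $\psi(n) = c \cdot n^{-1/k}$ on dyadic ranges and then bootstrap to arbitrary non-increasing $\psi$ via the usual "doubling trick."

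For the Gallagher part, I would use the split structure $\mu = \mu_1 \times \cdots \times \mu_k$ and a dyadic slicing:
\[
\bigl\{\bx : \|nx_1\| \cdots \|nx_k\| < \psi(n)\bigr\} = \bigcup_{\bt \in \bZ_{\ge 0}^k} \bigcap_{j=1}^k \bigl\{\bx : \|n x_j\| \asymp 2^{-t_j}\bigr\},
\]
where the union runs over $\bt$ with $t_1 + \cdots + t_k \approx \log(n / \psi(n))$. By Fubini and the product form of $\mu$, each slice factors into one-dimensional events to which the one-dimensional Khinchin estimate applies, with $\dim_{\ell^1}(\mu_j) > 1 - \tfrac{1}{k+1}$ guaranteeing the single-variable Fourier bounds survive uniformly across all relevant scales. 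Summing over admissible $\bt$ produces the expected $(\log n)^{k-1}$ factor, and then a quantitative Borel--Cantelli--type argument (modelled on the convergence--divergence dichotomy of Gallagher) finishes.

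The main obstacle I anticipate is controlling $\mu(E_n \cap E_m)$ uniformly in $n, m$: the trivial bound $\mu(E_n)$ wastes factors of $\psi$, while the naive Fourier bound loses powers of $n$. The interpolated bound required to match the exponent $\tfrac{k-1}{k+1}$ suggests splitting the Fourier sum into low, medium, and high frequencies, with crossover scales chosen optimally in terms of $\dim_{\ell^1}(\mu)$; this is where the specific arithmetic of $\tfrac{k-1}{k+1}$ should enter. In the Gallagher half, an additional subtlety is that the dyadic shells with very unbalanced $\bt$ (most mass in one coordinate) risk blowing up; monotonicity of $\psi$ and the symmetric assumption \eqref{SplitAssumption} on each $\mu_j$ should be exactly what one needs to dominate these shells uniformly.
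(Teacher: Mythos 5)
Your high-level frame (smooth the indicator, expand in Fourier series, run a quantitative Borel--Cantelli) matches the paper's, and your treatment of the first moment is essentially the paper's expectation transference principle: the periodic function $A_n^*$ has Fourier support on $n\bZ^k$, and the $\ell^1$ dimension plus a divisor bound controls the non-principal terms. But the heart of the divergence half --- the second moment --- is precisely the step you flag as "the main obstacle" and then leave unresolved, and the resolution you gesture at (splitting the frequency sum into low/medium/high ranges with optimised crossover scales) is not the mechanism that works. The paper does \emph{not} bound $\mu(E_n \cap E_m) - \mu(E_n)\mu(E_m)$ term by term; such a pointwise quasi-independence statement for a fractal measure is out of reach. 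Instead it proves a \emph{variance transference principle}: $\sum_{m,n\le N}\mu(f_mf_n) = \sum_{m,n\le N}\lam(f_mf_n) + o(E_N(\mu)^2)$, so that quasi-independence is imported wholesale from the Lebesgue case (which is itself a nontrivial input from the authors' earlier work, not "routine"). The error term in this transference is controlled by writing the cross-frequencies as $\bxi+\bxi'$ with $n\mid\bxi$, $n'\mid\bxi'$ and counting integer solutions of the linear system $T\bn=\bx$, split into a partial-rank case ($\bt,\bt'$ parallel) and a full-rank case, each handled with the divisor bound. This Diophantine counting --- not a frequency-range decomposition --- is where the exponent $\frac{k-1}{k+1}$ arises, and nothing in your sketch supplies a substitute for it.

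Two further gaps. First, divergence Borel--Cantelli only yields $\mu(\cE_\infty)\ge 1/C$, and for a fractal measure there is no zero--one law to upgrade positive measure to full measure; the paper needs the Lebesgue quasi-independence constant $C$ to be taken arbitrarily close to $1$ (via a careful choice of bump function supported on an annulus), which your argument does not address. Second, your Fubini factorisation of the Gallagher problem into one-dimensional Khinchin events is roughly how the split hypothesis \eqref{SplitAssumption} is used, but only for the \emph{convergence} direction (where the ETP for product measures factors coordinate-wise); for the divergent Gallagher statement the paper needs no splitting at all, only \eqref{MainAssumption}, because the VTP treats the dyadic rectangles of an admissible system for a general measure. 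Your proposal inverts this: it leans on the product structure exactly where it is not needed and omits the counting argument where it is.
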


The Fourier $\ell^1$ dimension $\dim_{\ell^1}$ was used in \cite{Yu} to establish a Jarn\'ik--Besicovitch-type theorem for fractal measures; we will define it in \S \ref{sec: dimension}. Computing Fourier $\ell^1$ dimension for missing-digit measures is not a simple task. In fact, it is not even immediately clear why there are proper missing-digit measures that satisfy \eqref{MainAssumption}. For this reason, we will in \S \ref{sec: special} provide concrete examples where Theorem \ref{thm: main} applies. In particular, we will see that if the `digit structure' of a missing-digit measure is simple enough, then its Fourier $\ell^1$ and Hausdorff dimensions differ only slightly. Fourier $\ell^1$ dimension was used in \cite{YuRadial, YuManifold} to deduce various results on the continuity of projections of missing-digit measures in $\mathbb{R}^k$, where $k \ge 2$.

Recently, in \cite{CVY}, a precise method to compute $\dim_{\ell^1}(\mu)$ for an arbitrary missing-digit measure $\mu$ on $[0,1]$ was presented. We discuss a higher-dimensional analogue in \S \ref{sec: compute}.

\subsection{Related work}

Similar conjectures can be found in the articles of Levesley--Salp--Velani~\cite{LSV2007} and Bugeaud--Durand~\cite{BD2016}, and the problems are also discussed in the articles of Mahler \cite{Mahler84} and Kleinbock--Lindenstrauss--Weiss~\cite{KLW}. Recent progress towards Conjecture \ref{conj: main} can be found in \cite{CVY, DJ, EFS2011, KL2020, SW2019, Yu}. We highlight the following breakthrough.

\begin{thm}
[Khalil--Luethi \cite{KL2020}]
\label{KLthm}
Let $k \in \bN$, and let $\mu$ be a missing-digit measure on $[0,1]^k$. Then there is an effectively-computable constant $\epsilon > 0$ depending only on $k$ such that if $\Haus(\mu) > k -\epsilon$ then $\mu$ is Khinchin. 
\end{thm}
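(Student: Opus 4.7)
My plan is to proceed via homogeneous dynamics. First, invoke the Dani correspondence to reformulate membership in $W_k(\psi)$: $\bx \in W_k(\psi)$ precisely when the orbit $\{g_t u(\bx) \bZ^{k+1} : t \ge 0\}$ in $\mathrm{SL}_{k+1}(\bR)/\mathrm{SL}_{k+1}(\bZ)$ makes cusp excursions at a prescribed rate determined by $\psi$, where $g_t = \diag(e^t, e^{-t/k}, \ldots, e^{-t/k})$ and $u(\bx)$ is the standard upper-triangular unipotent parametrisation. This reduces both halves of Khinchin's dichotomy to statements about the $\mu$-generic behaviour of this orbit in the space of unimodular lattices.

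The convergence half then follows from a standard Borel--Cantelli argument; its hypotheses (Ahlfors regularity, mild non-concentration near affine hyperplanes) are satisfied by all missing-digit measures. The divergence half is the substantive point: one establishes effective equidistribution of the pushed-forward measures $(g_t)_\ast u_\ast \mu$ towards Haar measure on $\mathrm{SL}_{k+1}(\bR)/\mathrm{SL}_{k+1}(\bZ)$, with an explicit polynomial rate, and then feeds this into a dynamical Borel--Cantelli to produce the required $\mu$-a.e.\ cusp excursions.

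The engine for this effective equidistribution comprises three ingredients: quantitative mixing on the space of lattices, a Margulis-type non-divergence estimate preventing too much of the mass from escaping to the cusp, and a polynomial non-concentration estimate of the form $\mu(L^{(r)}) \ll r^{\alpha}$ for every affine subspace $L \subset \bR^k$ and every $r > 0$, where $\alpha > 0$ depends effectively on $k$ and $\Haus(\mu)$. This is a quantitative strengthening of the friendliness condition of Kleinbock--Lindenstrauss--Weiss.

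The main obstacle is deriving this polynomial non-concentration estimate from purely Hausdorff-dimension data. The route is to exploit the self-similar IFS structure of $\mu$: the $\mu$-mass of $L^{(r)}$ is controlled by a count of digit strings at an appropriate scale whose associated cubes intersect $L^{(r)}$, turning the question into a combinatorial one about how concentrated the allowed digit set can be inside a thin neighbourhood of an affine subspace. When $\Haus(\mu) > k - \epsilon$ for $\epsilon$ sufficiently small in terms of $k$, the allowed digit set must be rich enough in every coordinate direction that no such neighbourhood can absorb a disproportionate share of digit strings; tracking the resulting $\alpha$ through the equidistribution and Borel--Cantelli steps then produces the effective $\epsilon$ appearing in the statement.
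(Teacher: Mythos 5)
First, note that the paper does not prove this statement at all: it is quoted verbatim as an external result of Khalil--Luethi \cite{KL2020}, and the authors explicitly contrast that random-walks-on-homogeneous-spaces proof with their own Fourier-analytic methodology (moment transference principles, ETP/VTP). So there is no internal proof to match your proposal against; the only fair comparison is with the actual argument in \cite{KL2020}.

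Measured against that, your sketch identifies the right general arena (Dani correspondence, convergence half via Borel--Cantelli plus friendliness, divergence half via effective equidistribution of $(g_t)_*u_*\mu$ and a dynamical Borel--Cantelli), but the engine you propose for the crucial step does not work, and this is a genuine gap rather than a presentational one. ``Quantitative mixing $+$ Margulis non-divergence $+$ polynomial non-concentration near affine subspaces'' suffices when the measure being pushed is Lebesgue or a smooth measure on a nondegenerate manifold, because one can thicken in the flow direction and invoke an effective ergodic theorem; for a singular fractal measure there is no such thickening, and mixing of the $g_t$-flow gives no control on $(g_t)_*u_*\mu$. The whole point of Khalil--Luethi is to replace this with a different mechanism: the self-similarity of $\mu$ realises the translates as iterates of a random walk (averaging over the digit maps composed with rational translations on an adelic/congruence extension of the space of lattices), and the effective equidistribution comes from a spectral gap for that Markov operator. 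Relatedly, you mislocate where the hypothesis $\Haus(\mu)>k-\epsilon$ enters: non-concentration near affine subspaces (friendliness) holds for any missing-digit measure not supported on a hyperplane, with no dimension threshold; the closeness of $\Haus(\mu)$ to $k$ is needed to make the operator-norm/spectral-gap estimate close, i.e.\ to beat the contribution of the cusp and of intermediate subgroups in the $L^2$-expansion. As written, your final paragraph derives $\epsilon$ from the wrong ingredient, and the middle paragraph asserts precisely the theorem whose proof is the substance of \cite{KL2020}.
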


Khalil and Luethi's method applies not only to missing-digit measures but also to more general self-similar measures with rational parameters and the open set condition. The Hausdorff dimension $\Haus(\mu)$ of a Borel measure $\mu$ is defined, for example, in \cite[Chapter 2]{Fal97}, but we do not require the notion in full generality. If $\mu$ is the Cantor--Lebesgue measure of a missing-digit fractal $\cK$, see \S \ref{sec: Cantor}, then
\[
\dim_{\ell^1}(\mu) \le \Haus(\mu) = \Haus(\cK).
\]
In this setting, we will see in \S \ref{sec: special} that the difference between $\dim_{\ell^1}(\mu)$ and $\Haus(\mu) = \Haus(\cK)$ is often small, in which case Theorem \ref{thm: main} goes beyond Theorem \ref{KLthm}.

In the special case $k=1$, there has been even more dramatic recent progress.

\begin{thm}
[B\'enard--He--Zhang \cite{BHZ}]
\label{BHZthm}
Let $\mu$ be a self-similar probability measure on $[0,1]$ whose support is not a singleton. Then $\mu$ is Khinchin.
\end{thm}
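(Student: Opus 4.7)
Writing $E_n = \{x \in [0,1) : \|nx\| < \psi(n)\}$, one has $W_1(\psi) = \limsup_n E_n$, and the task splits into a convergence half (zero measure when $\sum_n \psi(n) < \infty$) and a divergence half (full measure when $\sum_n \psi(n) = \infty$). The natural strategy is to study the push-forward $(T_n)_*\mu$ under the multiplication-by-$n$ map $T_n : x \mapsto nx \bmod 1$, since $E_n = T_n^{-1}([-\psi(n), \psi(n)])$, and to distill both halves from quantitative equidistribution of $(T_n)_*\mu$ towards Lebesgue measure on the circle.

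For the convergence half, I would aim for the uniform bound $\mu(E_n) \ll \psi(n)$ and then conclude with Borel-Cantelli. Expanding Fourier-analytically,
\[
\mu(E_n) = 2\psi(n) + \sum_{m \neq 0} \widehat{\mathbf{1}_{[-\psi(n),\psi(n)]}}(m) \, \hat{\mu}(mn),
\]
so it would suffice to control the tail via an averaged Fourier estimate for $\hat\mu$ along the arithmetic progressions $(mn)_{m \neq 0}$. The non-singleton hypothesis ensures that $\mu$ is non-atomic, and iterating the self-similarity $\mu = \sum_i p_i \,(\phi_i)_* \mu$ scale-by-scale is the natural mechanism for extracting the required decay.

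The divergence half is the heart of the matter. The plan is to establish effective equidistribution of $(T_n)_*\mu$ towards Lebesgue, quantitatively strong enough that $\mu(E_n) = 2\psi(n) + o(\psi(n))$ for most $n$, together with a quasi-independence statement controlling pairwise correlations $\mu(E_m \cap E_n)$ across different scales. A divergent Borel-Cantelli in the spirit of Chung-Erd\H{o}s then gives $\mu(W_1(\psi)) > 0$, and a zero-one law coming from ergodicity of the shift on the symbolic coding of the IFS upgrades this to full measure.

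The hard part is the quantitative equidistribution of $(T_n)_*\mu$. Since $\hat{\mu}$ may decay only logarithmically in unfriendly cases (e.g.\ Bernoulli convolutions at certain algebraic parameters), pure harmonic analysis cannot suffice. I would instead try to realise $(T_n)_*\mu$, via iteration of the self-similarity together with the base-$b$ expansion of $n$ (for a suitable auxiliary $b$), as the $N$-th step distribution of a random walk on $\bR/\bZ$, and then invoke $L^q$-flattening together with classification of stationary measures in the tradition of Bourgain-Furman-Lindenstrauss-Mozes and Lindenstrauss to force equidistribution. The non-singleton hypothesis enters solely to guarantee that this walk is genuinely non-degenerate; all structural input from $\mu$ beyond that should emerge from the self-similar relation rather than any Diophantine assumption on the contraction ratios.
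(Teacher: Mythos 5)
First, a point of order: the paper does not prove this statement. Theorem \ref{BHZthm} is quoted from B\'enard--He--Zhang \cite{BHZ}, and the authors explicitly contrast their own Fourier-analytic methods with the random-walks-on-homogeneous-spaces approach of \cite{BHZ} and \cite{KL2020}. So there is no internal proof to compare against, and your sketch has to stand on its own.

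It does not, and the first failure is concrete. In your convergence half you aim for the uniform bound $\mu(E_n) \ll \psi(n)$. This is false for the simplest example in the theorem's scope: take $\mu$ to be the Cantor--Lebesgue measure of the middle-third Cantor set (a self-similar measure with non-singleton support) and $n = 3^k$. Then $T_{3^k}$ preserves $\mu$, so $\mu(E_{3^k}) = \mu(\{y : \|y\| < \psi(3^k)\}) \asymp \psi(3^k)^{s}$ with $s = \log 2/\log 3 < 1$, which is vastly larger than $\psi(3^k)$. In particular $(T_n)_*\mu$ does not equidistribute along such $n$, and $\hat\mu$ does not decay along powers of $3$, so the "averaged Fourier estimate along $(mn)_{m\neq 0}$" you hope to extract from iterating the self-similarity is not available; the convergence case genuinely requires treating the bad scales separately. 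The same obstruction sits under your divergence half: effective equidistribution of $(T_n)_*\mu$ on $\bR/\bZ$ for most $n$, without any Diophantine hypothesis on the contraction ratios, is precisely the hard content of the theorem, and "realise it as a random walk and invoke $L^q$-flattening and BFLM" names the difficulty rather than resolving it --- the circle is a fixed compact group, the maps $T_n$ are not the steps of a single stationary random walk, and the actual argument of \cite{BHZ} instead transports the problem via the Dani correspondence to $\mathrm{SL}_2(\bR)/\mathrm{SL}_2(\bZ)$ and proves a new effective equidistribution theorem for the associated random walk there. Finally, your concluding zero--one law is unjustified: $W_1(\psi)$ is not a shift-invariant or tail event for the symbolic coding of the IFS, so ergodicity of the shift does not upgrade positive measure to full measure; in \cite{BHZ} full measure is extracted from the quantitative second-moment (quasi-independence) estimates themselves.
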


Theorems \ref{KLthm} and \ref{BHZthm} both use random walks on homogeneous spaces. Our Fourier-analytic methodology, here and in \cite{CY, Yu}, is very different.

\subsection{More refined results}

In the course of proving Theorem \ref{thm: main}, we establish some finer results. In order to state them, we introduce some more refined notions. For $\psi: \bN \to [0,1)$ and $\by =(y_1,\dots,y_k)\in\mathbb{R}^k$, we denote by $W_k(\psi,\by)$ the set of  $\bx \in [0,1)^k$ such that
\[
\max\{\| n x_1-y_1 \|, \dots ,\| n x_k-y_k \|\} < \psi(n)
\] 
has infinitely many solutions $n \in \bN$, and write $W_k^\times(\psi,\by)$ for the set of $\bx \in [0,1)^k$ such that
\[
\| n x_1 - y_1 \| \cdots \| n x_k-y_k \| < \psi(n)
\]
holds for infinitely many $n \in \bN$.

\begin{defn}
Let $k \in \bN$, and let $\psi:\mathbb{N} \to [0,1)$ be non-increasing. A probability measure $\mu$ on $[0,1]^k$ is 
\begin{itemize}
\item \emph{inhomogeneous convergent Khinchin} if \[
\sum_{n=1}^{\infty} \psi(n)^k < \infty \implies \mu(W_k(\psi,\by)) = 0 \quad (\by \in \bR^k),
\]
\item \emph{inhomogeneous divergent Khinchin} if
\[
\sum_{n=1}^{\infty} \psi(n)^k = \infty \implies \mu(W_k(\psi,\by)) = 1 \quad (\by \in \bR^k),
\]
\item \emph{inhomogeneous convergent Gallagher} if
\[
\sum_{n=1}^{\infty} \psi(n) (\log n)^{k-1} < \infty \implies \mu(W^\times_k(\psi,\by)) = 0 \quad (\by \in \bR^k),
\]
\item \emph{inhomogeneous divergent Gallagher} if
\[
\sum_{n=1}^{\infty} \psi(n) (\log n)^{k-1} = \infty \implies \mu(W^\times_k(\psi,\by)) = 1 \quad (\by \in \bR^k).
\]
\end{itemize}
\end{defn}

We establish the following new results. Theorem \ref{thm: main} follows from these by specialising $\by = \bzero$ in the conclusions.

\begin{thm}
[Convergence theory]
\label{thm: convergence}
Let $k \in \bN$, and let $\mu$ be a Borel probability measure on $[0,1]^k$ satisfying
\begin{equation}
\label{WeakAssumption}
\dim_{\ell^1}(\mu) > k - \frac{k}{k+1}.
\end{equation}
Then $\mu$ is inhomogeneous convergent Khinchin. If moreover
$
\mu = \mu_1 \times \cdots \times \mu_k
$
is split and satisfies \eqref{SplitAssumption},
then it is inhomogeneous convergent Gallagher.
\end{thm}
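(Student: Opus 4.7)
The plan is to apply the first Borel--Cantelli lemma: for each $\by \in \bR^k$ I will show that $\sum_n \mu(A_n(\by)) < \infty$, where $A_n(\by)$ denotes the $n$-th approximation set, either $\{\bx : \max_i \|nx_i - y_i\| < \psi(n)\}$ in the Khinchin case or $\{\bx : \prod_i \|nx_i - y_i\| < \psi(n)\}$ in the Gallagher case. Since the inhomogeneous shift $\by$ enters only through a unit-modulus phase in each Fourier coefficient, absolute-value bounds on $\hat\mu$ deliver the inhomogeneous statement with no additional work; thus I may focus on the structural arguments in the homogeneous case.

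For the Khinchin part, the indicator $\one_{A_n}$ factorises as $\prod_i f_n^{(i)}(x_i)$, and each $f_n^{(i)}$ has period $1/n$, so its Fourier expansion on $[0,1]$ is supported on $n\bZ$ with coefficients $\widehat{f_n^{(i)}}(nm)$ of modulus at most $\min(2\psi(n), 1/(\pi|m|))$. Pairing against $\mu$ yields
\[
\mu(A_n(\by)) = (2\psi(n))^k + \sum_{\bm \ne \bzero} \Bigl( \prod_i \widehat{f_n^{(i)}}(nm_i) \Bigr) \overline{\hat\mu(n\bm)}.
\]
The main term is summable by hypothesis. The error term I plan to estimate by truncating and dyadically decomposing in $\|\bm\|_\infty$, controlling each piece via the bound $\sum_{\|\bk\|_\infty \le N} |\hat\mu(\bk)| \lesssim_\eta N^{k-s+\eta}$ that follows from $s := \dim_{\ell^1}(\mu)$, applied at scale $\bk = n\bm$. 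The hypothesis $s > k - k/(k+1)$ is then used to ensure that the resulting error sum, after Abel summation and interpolation with the crude bound $\mu(A_n) \ll 1$, is dominated by the already-convergent series coming from $\sum \psi(n)^k < \infty$.

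For the Gallagher part, the split hypothesis lets me dyadically decompose the hyperbolic region:
\[
\mu(A_n^\times(\by)) \leq \sum_{\bj : \sum_i j_i \geq \log_2(1/\psi(n))} \prod_i \mu_i\bigl(\{ x_i : \|nx_i - y_i\| < 2^{-j_i} \}\bigr).
\]
Each one-dimensional factor is estimated by the $k=1$ version of the Fourier argument above, using $\dim_{\ell^1}(\mu_i) > 1 - 1/(k+1)$. The sum over $\bj$ produces a main contribution of order $\psi(n)(\log(1/\psi(n)))^{k-1}$, and after the standard reduction $\psi(n) \mapsto \max(\psi(n), n^{-2})$ this is dominated by $\psi(n)(\log n)^{k-1}$, summable by hypothesis. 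The chief difficulty throughout is calibrating the dyadic cutoff and the scale $N \asymp n/\psi(n)$ in the $\ell^1$ Fourier-dimension bound so that the error exponents match the stated thresholds $k - k/(k+1)$ and $1 - 1/(k+1)$; the inhomogeneity and the product structure are secondary technical issues.
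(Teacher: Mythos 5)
Your overall strategy (first Borel--Cantelli, Fourier expansion of the periodized approximating sets, support on the sublattice $n\bZ^k$, the $\ell^1$ Fourier-dimension bound, and a dyadic decomposition of the hyperbolic region in the Gallagher case) is the same as the paper's. But there is a genuine gap in the central step: you propose to estimate $\mu(A_n(\by))$ \emph{individually for each $n$}, bounding the error term by applying $\sum_{\|\bk\|_\infty \le N}|\hat\mu(\bk)| \ll N^{k-s+\eta}$ ``at scale $\bk = n\bm$''. The frequencies that actually occur are only the multiples of $n$ in the box $[-n/\psi(n), n/\psi(n)]^k$, and the cube-based $\ell^1$ hypothesis gives you no control over the $\ell^1$ mass of $\hat\mu$ on this sparse sublattice for a fixed $n$ (and indeed no such control holds: for $n$ a power of $b$ one has $|\hat\mu(n\bm)| = |\hat\mu(\bm)|$, so $\mu(A_n)$ can exceed $\lam(A_n)$ by a power of $n$). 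Replacing the sublattice sum by the full-cube sum at scale $n/\psi(n)$ loses a factor of roughly $n^{k-s}$ per term, and then $\sum_n \psi(n)^s n^{k-s}$ diverges for every admissible $s<k$ even at the critical rate $\psi(n)\asymp n^{-1/k}$. The paper's ETP recovers this loss by summing the Fourier-side error over a whole dyadic block $n\in[N,2N)$ at once and invoking the divisor bound: each frequency $\bxi$ is divisible by only $O(N^\eps)$ integers $n$ in the block, so the block-averaged error is $\ll N^\eps \psi^k (n/\psi)^{k-s+\eps}$ rather than $N$ times that, and it is exactly this saving that produces the threshold $k - k/(k+1)$. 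Your proposal never mentions this averaging over $n$, and without it the exponents cannot be made to match.

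Two secondary calibration issues. First, your reduction $\psi(n)\mapsto\max(\psi(n),n^{-2})$ in the Gallagher case is too weak: the error-versus-main-term comparison needs $\psi(n)^{1-\kappa} \gg n^{k(1-\kappa)-1+\eps}$, which fails for $\psi(n)=n^{-2}$ under the hypothesis $(k+1)(1-\kappa)<1$ alone; the correct (and still elementary) reductions are $\psi(n)\ge (n\log^{k+1}n)^{-1}$ for Gallagher and $\psi(n)\ge (n\log^2 n)^{-1/k}$ for Khinchin, both justified because the added functions satisfy the convergence hypothesis. Second, for the non-split Khinchin statement you work with sharp indicators, whose coefficients decay only like $\prod_i\min(\psi(n),1/|m_i|)$; the resulting anisotropic tails (some $|m_i|$ huge, others small) are not controlled by the cube-based $\ell^1$ dimension of a general, non-product $\mu$. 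The paper avoids this by smoothing, so that the coefficients decay rapidly outside a single dual cube; you should do the same (this costs nothing for an upper bound, since a bump function dominating the indicator suffices).
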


\begin{thm}
[Divergence theory]
\label{thm: divergence}
Let $k \in \bN$, and let $\mu$ be a Borel probability measure on $[0,1]^k$ satisfying \eqref{MainAssumption}. Then $\mu$ is inhomogeneous divergent Khinchin and inhomogeneous divergent Gallagher.
\end{thm}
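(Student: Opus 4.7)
The plan is to prove both divergence statements by a quantitative second-moment argument on a sequence of lim-sup sets, with Fourier analysis on $\bT^k$ controlling both first moments and pairwise correlations. Fix $\by \in \bR^k$ and a non-increasing $\psi$, and for each $n \in \bN$ consider the Khinchin target
\[
A_n(\by) := \{\bx \in [0,1)^k : \max_i \|nx_i - y_i\| < \psi(n)\}.
\]
Since $A_n(\by)$ is $1/n$-periodic in each coordinate, its indicator $\ds1_{A_n(\by)}$ has a Fourier expansion on $\bT^k$ supported in $n\bZ^k$, and inserting this expansion into $\mu(A_n(\by)) = \int \ds1_{A_n(\by)}\, d\mu$ yields
\[
\mu(A_n(\by)) = (2\psi(n))^k + \sum_{\bk \in n\bZ^k \setminus \{\bzero\}} c_{\bk}(n,\by)\, \hat\mu(-\bk),
\]
with $|c_{\bk}(n,\by)| \ll \prod_i \min(\psi(n), 1/|k_i|)$. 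The hypothesis $\dim_{\ell^1}(\mu) > k - (k-1)/(k+1)$ should force the error term to be dominated by the main term $(2\psi(n))^k$ for the overwhelming majority of $n$, giving $\mu(A_n(\by)) \asymp \psi(n)^k$; combined with $\sum \psi(n)^k = \infty$, this produces the divergence of $\sum_n \mu(A_n(\by))$.

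The main work lies in the pairwise overlap estimate. Expanding $\ds1_{A_m \cap A_n} = \ds1_{A_m}\,\ds1_{A_n}$ and integrating against $\mu$ gives
\[
\mu(A_m \cap A_n) = \mu(A_m)\,\mu(A_n) + E(m,n),
\]
where the error term $E(m,n)$ collects cross-contributions indexed by frequencies in the sumset $m\bZ^k + n\bZ^k = \gcd(m,n)\bZ^k$. Splitting the double sum $\sum_{m,n \le N}$ dyadically according to $\gcd(m,n)$ and invoking the $\ell^1$ decay of $\hat\mu$ should yield the quasi-independence bound
\[
\sum_{m,n \le N} \mu(A_m \cap A_n) \ll \left(\sum_{n \le N} \mu(A_n(\by))\right)^2,
\]
from which a Paley--Zygmund / divergence Borel--Cantelli argument delivers $\mu(\limsup_n A_n(\by)) = 1$, establishing inhomogeneous divergent Khinchin.

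For the multiplicative case, the strategy is to reduce to the Khinchin case by the standard dyadic decomposition of the hyperbolic region: the target set $\prod_i \|nx_i - y_i\| < \psi(n)$ decomposes into boxes $\{\|nx_i - y_i\| < 2^{-\beta_i}\}$ indexed by $\bbeta \in \bN^k$ with $\beta_1 + \cdots + \beta_k \approx \log_2(1/\psi(n))$. Since the number of such $\bbeta$ is $\asymp (\log n)^{k-1}$, the Gallagher divergence condition $\sum \psi(n)(\log n)^{k-1} = \infty$ translates into a Khinchin-type divergence for a suitably-chosen system of rectangular targets; running the preceding Fourier argument on this family (now with anisotropic box sizes) then yields $\mu(W_k^\times(\psi,\by)) = 1$.

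The hard part is calibrating the Fourier error in the second-moment step with the sharp threshold $k - (k-1)/(k+1)$. The worst case arises when $m$ and $n$ share a large common factor, since then $\gcd(m,n)\bZ^k$ is relatively dense in $\bZ^k$ and the contribution of $\hat\mu$ is greatest; coprime pairs are handled by the $\ell^1$ decay at high frequency. Executing this splitting while controlling the accumulation of logarithmic losses in the multiplicative case, and absorbing the anisotropic box sizes into the same dimension threshold without strengthening it, is the principal technical obstacle.
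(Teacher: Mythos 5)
Your outline captures the right philosophy (a second-moment/divergence Borel--Cantelli argument driven by Fourier $\ell^1$ decay and the divisibility structure $n \mid \bxi$), and your reduction of the multiplicative problem to a family of anisotropic rectangular targets matches the paper's admissible set systems. But there are two genuine gaps, both located exactly where you defer to "the principal technical obstacle".

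First, the overlap estimate. The Fourier expansion of $\mu(A_m \cap A_n)$ produces an off-diagonal error
\[
\sum_{\substack{\bxi \in m\bZ^k,\ \bxi' \in n\bZ^k \\ \bxi + \bxi' \ne \bzero}} c_\bxi\, c'_{\bxi'}\, \hat\mu(\bxi+\bxi'),
\]
and the difficulty is not only that $\bxi+\bxi'$ ranges over the dense lattice $\gcd(m,n)\bZ^k$, but that a single frequency $\bx$ receives contributions from many pairs $(\bxi,\bxi')$, whereas the $\ell^1$ dimension controls $\sum_\bx |\hat\mu(\bx)|$ over boxes with multiplicity one. Splitting by $\gcd(m,n)$ does not resolve this. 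The paper's proof sums over $m,n$ dyadically and then bounds, for each fixed $\bx \ne \bzero$, the number of solutions $(\bt,\bt',n,n')$ of $n\bt + n'\bt' = \bx$ in the relevant ranges, via a dichotomy between $\bt \parallel \bt'$ (rank one) and the full-rank case, each controlled by the divisor bound; it is precisely this count, balanced against $\sum_\bx |\hat\mu(\bx)|$, that produces the threshold $k-(k-1)/(k+1)$. Without this counting step your second moment does not close. Second, even granting the bound $\sum_{m,n\le N}\mu(A_m\cap A_n) \ll E_N(\mu)^2$ with an unspecified constant, Paley--Zygmund/divergence Borel--Cantelli yields only $\mu(\limsup A_n) \gg 1$, not full measure. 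The paper instead transfers the variance to Lebesgue measure (the VTP), where the second moment is known with constant $C$ arbitrarily close to $1$ (Theorem 2.11), so that the general divergence lemma gives measure at least $1/C \to 1$; you would need either this transference or a zero--one law. A smaller caveat: the pointwise claim $\mu(A_n(\by)) \asymp \psi(n)^k$ fails for special $n$ (e.g.\ powers of the base $b$, where $\hat\mu(n\bxi) = \hat\mu(\bxi)$ and a single Fourier mode already contributes at the scale of the main term); only the dyadically averaged ETP holds, using that each $\bxi$ has $O(N^\eps)$ divisors in $[N,2N)$.
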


Observe that, for the divergent Gallagher property and for the convergent/divergent Khinchin properties, we do not require $\mu$ to be split. We only require $\mu$ to be split for the convergent Gallagher result. 
Note also that if $k = 1$ then the assumption \eqref{MainAssumption} cannot be met. We will see in \S \ref{sec: special} that if $k \ge 2$ then \eqref{MainAssumption} can hold.

\subsection{Methods}

Observe that sets like $W_k^\times(\psi)$ are limit superior sets. To show that they have full measure, we use the divergence Borel--Cantelli lemma \cite{BV2023}. This requires us to estimate sums of measures of arcs around rationals, and of the intersections of these arcs. We achieve this using Fourier analysis, specifically via the framework of `moment transference principles' that we recently developed in \cite{CY}. Unlike in the case of non-degenerate hypersurfaces \cite{CY}, the Fourier coefficients $\hat \mu(\bk)$ do not decay pointwise: \mbox{$\hat \mu(\bk) \not \to 0$} as $\| \bk \|_\infty \to \infty$. We instead exploit their average decay using the Fourier $\ell^1$ dimension --- developed in \cite{Yu} and \cite{CVY} --- as discussed in \S \ref{sec: DAfrac}. Divisibility also plays a key role, since the $n^{-1}$-periodicity of the arcs manifests as $n$-divisibility in Fourier space $\bZ^k$. To establish the variance transference principle, we ultimately count solutions to a system of Diophantine equations.

\subsection{A bound on the Hausdorff dimension}

Our expectation transference principle is strong enough to establish a Hausdorff dimension bound for simultaneous approximation on missing-digit fractals. To put this into context, we recall Levesley's inhomogeneous version of the classical Jarn\'ik--Besicovitch theorem. For $t \ge 0$ and $n \in \bN$, let $\psi_t(n) = n^{-t}$.

\begin{thm}
[Inhomogeneous Jarn\'ik--Besicovitch \cite{Lev1998}]
\label{LevesleyThm}
Let $k \in \bN$ and $\bgam \in \bR$. Then
\[
\Haus(W_k(\psi_t,\by)) = \min \left\{ \frac{k+1}{t+1}, k \right\}.
\]
\end{thm}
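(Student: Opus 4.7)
The plan is to prove two matching inequalities separately: an upper bound on $\Haus$ by a standard covering argument, and the lower bound via the Mass Transference Principle of Beresnevich and Velani combined with the inhomogeneous Khinchin divergence theorem.

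For the upper bound I would exploit that $W_k(\psi_t, \mathbf{y})$ is a $\limsup$ set of balls, namely
\[
W_k(\psi_t, \mathbf{y}) \subseteq \bigcap_{N \ge 1} \bigcup_{n \ge N} \bigcup_{\mathbf{p} \in \bZ^k \cap [-1, n+1]^k} B\!\left( \tfrac{\mathbf{p} + \mathbf{y}}{n},\, n^{-t-1} \right).
\]
For any $s > (k+1)/(t+1)$, this furnishes the cover estimate
\[
\mathcal{H}^s_\infty(W_k(\psi_t, \mathbf{y})) \ll \sum_{n \ge N} n^k \cdot n^{-s(t+1)} = \sum_{n \ge N} n^{k - s(t+1)} \xrightarrow{N \to \infty} 0,
\]
whence $\Haus(W_k(\psi_t, \mathbf{y})) \le (k+1)/(t+1)$; the trivial bound $\Haus \le k$ handles the minimum.

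For the lower bound, if $t \le 1/k$ then $\sum_n \psi_t(n)^k = \sum_n n^{-tk} = \infty$, so the inhomogeneous Khinchin theorem (classical, via ubiquity or Fourier series) yields $\lam_k(W_k(\psi_t, \mathbf{y})) = 1$ and hence $\Haus = k$. For $t > 1/k$, set $s = (k+1)/(t+1) \in (0, k)$ and consider the auxiliary rate $\tilde\psi(n) = n^{-1/k}$, for which divergent Khinchin again gives $\lam_k(W_k(\tilde\psi, \mathbf{y})) = 1$. Enumerating the balls $B_{n,\mathbf{p}} = B((\mathbf{p} + \mathbf{y})/n,\, n^{-(k+1)/k})$ as a single sequence with radii $r_j$, I observe that $r_j^{k/s} = n^{-(k+1)/s} = n^{-(t+1)}$ matches precisely the radii defining $W_k(\psi_t, \mathbf{y})$. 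The Beresnevich--Velani Mass Transference Principle then delivers $\Haus(W_k(\psi_t, \mathbf{y})) \ge s$, completing the argument.

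The principal historical obstacle is the lower bound: Levesley's 1998 proof predates the Mass Transference Principle, and instead proceeds by a delicate nested construction of Cantor-type subsets together with a mass distribution argument, requiring a ubiquity-style statement for the shifted rationals $(\mathbf{p} + \mathbf{y})/n$ at every scale. The inhomogeneous shift $\mathbf{y}$ causes the main technical nuisance in that approach, since one loses the group-theoretic symmetry available in the homogeneous case; with MTP, however, this difficulty is absorbed entirely into the (easier) inhomogeneous Khinchin input.
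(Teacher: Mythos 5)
This theorem is not proved in the paper at all: it is imported verbatim from Levesley \cite{Lev1998}, so there is no internal proof to compare against. Judged on its own terms, your argument is correct and is the standard modern proof of the inhomogeneous Jarn\'ik--Besicovitch theorem. The upper bound is exactly the natural covering argument (and could be run through the paper's Lemma \ref{HausdorffCantelli}, since for each $n$ the set is a union of $O(n^k)$ hypercubes of diameter $\asymp n^{-t-1}$, giving convergence of $\sum n^{k-s(t+1)}$ precisely when $s>(k+1)/(t+1)$). The lower bound via the Mass Transference Principle is also correctly calibrated: with $s=(k+1)/(t+1)$ the enlarged balls $B^s$ have radius $n^{-(t+1)s/k}=n^{-(k+1)/k}$, whose limsup set is $W_k(\tilde\psi,\by)$ with $\tilde\psi(n)=n^{-1/k}$, and $\sum\tilde\psi(n)^k=\sum n^{-1}$ diverges, so the inhomogeneous divergent Khinchin theorem (Sz\"usz for $k=1$, Schmidt for $k\ge 2$) supplies the full-measure hypothesis; the MTP then gives $\mathcal H^s(W_k(\psi_t,\by))=\infty$ and hence the lower bound with no $\eps$-loss. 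Two cosmetic points: reduce $\by$ modulo $\bZ^k$ before bounding the range of $\bp$, and note that the max-norm boxes are comparable to balls so the ball-based MTP applies. Your historical remark is also accurate --- Levesley's original 1998 argument predates the MTP and instead builds a Cantor-type subset supporting a suitable mass distribution via an inhomogeneous ubiquity statement --- which is presumably why the authors simply cite the result rather than reprove it.
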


Our result is as follows.

\begin{thm}
\label{HausdorffConvThm}
Let $k \in \bN$ and $\by \in \bR^k$.
Then there exists $\eps > 0$ such that the following holds whenever
\[
t < \frac1k + \eps.
\]
Let $\cK$ be a missing-digit fractal in $[0,1]^k$ whose Cantor--Lebesgue measure $\mu$ satisfies 
\eqref{WeakAssumption}.
Then
\[
\Haus(W_{k}(\psi_t,\by) \cap \cK) \leq \Haus(W_k(\psi_t,\by)) + \Haus(\cK) - k.
\]
\end{thm}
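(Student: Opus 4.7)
The plan is to combine the Hausdorff--Cantelli lemma with a Frostman-type counting argument, the counting being powered by the expectation transference principle underpinning Theorem \ref{thm: convergence}. Set $r_n := \psi_t(n)/n = n^{-t-1}$ and write $A_n := A_n(\psi_t,\by)$. The set $A_n$ is a disjoint union of $n^k$ closed axis-parallel boxes of sidelength $2r_n$, centred at the rationals $(\by + \bp)/n$ with $\bp \in \{0,\ldots,n-1\}^k$. Let $N_n$ count those boxes that meet $\cK$. Since $W_k(\psi_t,\by) \cap \cK = \limsup_n (A_n \cap \cK)$, the Hausdorff--Cantelli lemma reduces the theorem to showing
\[
\sum_{n=1}^{\infty} N_n \, n^{-(t+1)s} < \infty
\qquad \text{for every } s > \Haus(\cK) + \frac{1 - kt}{t+1},
\]
the threshold being an algebraic rewriting of $(k+1)/(t+1) + \Haus(\cK) - k$.

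The next step converts $N_n$ into a measure estimate by Ahlfors--David regularity. The Cantor--Lebesgue measure of a missing-digit fractal is AD-regular of exponent $\Haus(\cK)$, so every box of sidelength $2 r_n$ meeting $\cK$ is contained in a concentric box of sidelength $6 r_n$ carrying $\mu$-mass at least $c\, r_n^{\Haus(\cK)}$; the enlargements have bounded overlap, so
\[
N_n \;\ll\; \mu(A_n^+)\, n^{(t+1)\Haus(\cK)},
\]
where $A_n^+$ is a fixed dilate of $A_n$. Substituting this into the Hausdorff--Cantelli sum shows that the stated Hausdorff bound follows once we prove the expectation estimate $\mu(A_n^+) \ll n^{-tk + o(1)}$.

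That final estimate is the crux, and this is where \eqref{WeakAssumption} enters. Because $A_n^+$ is $(1/n)$-periodic modulo the inhomogeneous shift by $\by$, the Fourier coefficients of $\chi_{A_n^+}$ on $\bT^k$ live on the sublattice $n\bZ^k$. Expanding and integrating against $\mu$ gives
\[
\mu(A_n^+) \;=\; \lam(A_n^+) \;+\; \sum_{\bl \in \bZ^k \setminus \{\bzero\}} \hat{\chi}_{A_n^+}(n\bl)\,\hat{\mu}(-n\bl),
\]
with main term $\lam(A_n^+) \asymp \psi_t(n)^k = n^{-tk}$. Using the product-of-sinc form of $\hat{\chi}_{A_n^+}(n\bl)$, truncating the frequency sum at the critical scale $\|\bl\|_\infty \sim 1/\psi_t(n) = n^t$, and applying the $\ell^1$ decay
\[
\sum_{0 < \|\bm\|_\infty \le M} |\hat{\mu}(\bm)| \;\ll\; M^{k - \dim_{\ell^1}(\mu) + \eta} \qquad (\eta > 0),
\]
together with a dyadic-shell summation by parts in the tail, delivers the error control $n^{-tk + o(1)}$ once $\dim_{\ell^1}(\mu) > k - k/(k+1)$ and $t$ is sufficiently close to $1/k$.

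The hardest part will be this last Fourier estimate. The naive bound $|\hat{\chi}_{A_n^+}(n\bl)| \le \psi_t(n)^k$ is too crude: using it blindly loses a factor of $n^{(t+1)(k - \dim_{\ell^1}(\mu))}$, which swamps the main term. One therefore has to genuinely exploit the coordinate-wise sinc decay and interpolate it against the dyadic $\ell^1$-mass of $\hat{\mu}$, an interpolation already developed in the transference machinery of \cite{CY, Yu}. The precise value $k - k/(k+1)$ in \eqref{WeakAssumption} and the smallness of $\eps$ in $t < 1/k + \eps$ together carve out exactly the range in which the Fourier error stays below the Lebesgue main term; outside this window the method breaks down and a different argument would be required.
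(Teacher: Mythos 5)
Your overall architecture is the same as the paper's: Hausdorff--Cantelli (Lemma \ref{HausdorffCantelli}), Ahlfors--David regularity of $\mu$ to convert the count of surviving boxes into a $\mu$-measure, and a Fourier-analytic expectation estimate powered by \eqref{WeakAssumption}. Your reduction of the theorem to the convergence of $\sum_n N_n n^{-(t+1)s}$, and the passage $N_n \ll \mu(A_n^+)\, n^{(t+1)\Haus(\cK)}$, are both correct and match the paper.

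The gap is the pivotal claim $\mu(A_n^+) \ll n^{-tk+o(1)}$ for \emph{every} $n$: this is false. Take $\by = \bzero$ and $n = b^j$ a power of the base. Then $A_n$ contains $(\#\cD)^j = n^{\Haus(\cK)}$ boxes centred at points of $\cK$ (namely the truncations $\sum_{i \le j} \bd^{(i)} b^{-i}$), each carrying $\mu$-mass $\gg r_n^{\Haus(\cK)}$ by regularity, so $\mu(A_n) \gg n^{\Haus(\cK)} r_n^{\Haus(\cK)} = n^{-t\Haus(\cK)} \gg n^{-tk}$, a genuine power larger than your claimed bound. Relatedly, the mechanism you propose for proving the estimate cannot deliver it: the surviving frequencies are $\bm = n\bl$ with $0 < \|\bl\|_\infty \le n^t$, and in that entire range $|\hat{\chi}_{A_n^+}(n\bl)| \asymp \lam(A_n^+)$ — there is no sinc decay to exploit until $\|\bl\|_\infty$ exceeds $n^t$, and the $\ell^1$ dimension gives no control on the sparse sublattice sum $\sum_{n \mid \bm} |\hat\mu(\bm)|$ for a \emph{single} $n$ beyond the full-lattice bound, which you correctly note loses a factor of $n^{(t+1)(k-\dim_{\ell^1}(\mu))}$. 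The saving actually comes from averaging over $n$ in a dyadic block: each frequency $\bm$ with $\|\bm\|_\infty \le N^{t+1}$ is divisible by only $O(N^{\eps})$ integers $n \in [N,2N)$, so the error summed over the block is $\ll N^{\eps - tk + (t+1)(k-\kappa)}$, which is $o(N^{1-tk})$, the summed main term, precisely under \eqref{WeakAssumption} and $t$ near $1/k$. This is the content of Theorem \ref{thm: ETP for general measure}, which is what the paper invokes here. Your argument is repaired by replacing the per-$n$ estimate with this dyadic-block estimate; since your weights $n^{-(t+1)(s-\Haus(\cK))}$ are essentially constant on dyadic blocks, the rest of your reduction then goes through unchanged (and the exceptional $n$ such as powers of $b$ are absorbed harmlessly into the block averages).
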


\begin{rem}
This upper bound was first conjectured with equality in \cite{BD2016}, when $\cK$ is the middle-third Cantor set and $\by = \bzero$. Despite the recent progress made in \cite{Yu} and \cite{CVY}, the problem remains open. The conjectured threshold differs for larger values of $t$, transitioning to $\Haus(\cK)/(t + 1)$.
\end{rem}

\subsection{Counting rational points on/near missing-digit fractals}

For any compact set $\cK \subset \mathbb{R}^k$, and real numbers $Q \ge 1$ and $\del \ge 0$, we define
\[
\cN_\cK(Q,\delta) =
\{ (\ba,q) \in \bZ^k \times (\bZ \cap [1,Q]): \dist(\ba/q, \cK) \le \delta/Q \}.
\]
Observe that $\mathcal{N}_\cK(Q,0)$ comprises rational points in $\cK$ of height at most $Q$. A challenging problem is to show that if $\cK$ `lacks linear structure' but has a naturally-associated dimension $\dim(\cK)$ then, for any $\varepsilon>0,$
\[
\#\mathcal{N}_\cK(Q,0) \ll_\eps Q^{\dim (\cK) + \epsilon}.
\]
For missing-digit fractals, we have $\dim (\cK) = \Haus (\cK).$ 

For irreducible projective varieties, the problem is known as Serre's dimension growth conjecture \cite{Bro2009, Serre}. It was solved by Salberger in 2009, and that solution was recently published \cite{Sal2023}.

For the middle-third Cantor set, the conjecture was made by Broderick, Fishman, and Reich \cite{BFR11}. The statement naturally extends to missing-digit fractals. In the case $k=1$, progress was made recently in \cite{CVY}, but not until now has substantial progress been made for $k \ge 2$.

For $\del \in (0,1)$, heuristic reasoning suggests that
\begin{equation}
\label{HeuristicEstimate}
\#\mathcal{N}_\cK(Q,\delta) \asymp \delta^{k-\dim (\cK)} Q^{\dim(\cK)+1}.
\end{equation}
The critical threshold, corresponding to Dirichlet's approximation theorem, is $\del = Q^{-1/k}$. The problem becomes significantly more demanding when $\delta$ is smaller than this, and the estimate can even break down when $\delta$ is extremely small. 
A related problem was introduced by Mazur~\cite{Mazur}, who asked how close a rational point can be to a smooth planar curve and still miss.

When $\cK$ is a suitably curved manifold, we now have some good counting estimates with $\delta$ below the $Q^{-1/k}$ threshold, especially for hypersurfaces, see for instance \cite{BVVZ2021, Hua2020, SchYam2022, S24}.
For certain missing-digit fractals on $[0,1]$, such a result was obtained in \cite{CVY}. Our present methods deliver \eqref{HeuristicEstimate} beyond the critical threshold for missing-digit fractals on $[0,1]^k$.

\begin{thm}
\label{thm: Rational Counting}
Let $k \in \bN$, and let $\cK$ be a proper missing-digit fractal in $[0,1]^k$ with Cantor--Lebesgue measure $\mu$ satisfying \eqref{WeakAssumption}. Then, for some constant $\eta > 0,$ we have \eqref{HeuristicEstimate}
whenever $Q$ is sufficiently large and $\del \gg Q^{-\eta-1/k}$.
\end{thm}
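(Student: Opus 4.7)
The plan is to deduce Theorem \ref{thm: Rational Counting} from the expectation transference principle that underlies the convergent Khinchin theory (Theorem \ref{thm: convergence}), combined with the Ahlfors regularity of missing-digit measures.

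First I would set $\rho = \delta/Q$ and recast the count as an integral against $\mu$. Because the Cantor--Lebesgue measure $\mu$ of a missing-digit fractal is Ahlfors $s$-regular with $s = \Haus(\cK)$ at the dyadic scales determined by the base, every rational point $\ba/q$ within $\rho$ of $\cK$ contributes $\mu$-mass comparable to $\rho^s$ in $B(\ba/q, \rho)$. Summing over such rational points yields
\[
\#\mathcal{N}_\cK(Q,\delta) \;\asymp\; \rho^{-s} \sum_{q \le Q} \mu(A_q(\rho q)), \qquad A_q(t) := \{\bx \in [0,1]^k : \|q\bx\|_\infty \le t\},
\]
so the counting problem reduces to estimating the expectation sum $\sum_{q \le Q} \mu(A_q(\rho q))$, which is precisely the kind of quantity handled by the expectation transference principle.

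Next I would Fourier-expand. Writing $\ind_{A_q(t)}(\bx) = B_t(q\bx \bmod 1)$ with $B_t$ the indicator of the box $[-t,t]^k$ on $\mathbb{T}^k$, whose Fourier coefficients satisfy $|\widehat{B}_t(\bk)| \ll \prod_j \min(2t, 1/|k_j|)$, orthogonality gives
\[
\mu(A_q(\rho q)) \;=\; (2\rho q)^k \;+\; \sum_{\bk \ne \bzero} \widehat{B}_{\rho q}(\bk)\, \widehat{\mu}(q\bk).
\]
Summing the main term over $q \le Q$ produces $\sum_q (2\rho q)^k \asymp \rho^k Q^{k+1}$, which after multiplication by $\rho^{-s}$ delivers the heuristic $\delta^{k-s} Q^{s+1}$ in both directions of \eqref{HeuristicEstimate}.

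The technical heart and main obstacle is to show that the aggregated error
\[
\mathcal{E}(Q,\rho) \;=\; \sum_{q \le Q} \sum_{\bk \ne \bzero} \widehat{B}_{\rho q}(\bk)\, \widehat{\mu}(q\bk)
\]
is of strictly smaller order than the main term $\rho^k Q^{k+1}$. Interchanging the order of summation and substituting $\bm = q\bk$ (so that $q$ divides every non-zero coordinate of $\bm$), the double sum regroups as
\[
\mathcal{E}(Q,\rho) \;=\; \sum_{\bm \ne \bzero} \widehat{\mu}(\bm) \sum_{q \le Q,\, q \mid \gcd(\bm)} \widehat{B}_{\rho q}(\bm/q),
\]
and the inner kernel is bounded by a divisor sum $\sigma_Q(\gcd \bm)$ times $\prod_{m_j \ne 0}\min(2\rho, 1/|m_j|)$ (with a factor of $2\rho$ for each zero coordinate). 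Grouping $\bm$ dyadically by $\|\bm\|_\infty$, treating the coordinates with $m_j = 0$ separately, and invoking the Fourier $\ell^1$ bound $\sum_{\|\bm\|_\infty \le N} |\widehat{\mu}(\bm)| \ll N^{k - \dim_{\ell^1}(\mu)}$ reduces the estimation to a collection of dyadic sums. The slack $k - \dim_{\ell^1}(\mu) < k/(k+1)$ provided by \eqref{WeakAssumption} is exactly what enables a power saving beyond the critical Dirichlet exponent $Q^{-1/k}$; balancing this saving against the main term $\rho^k Q^{k+1}$ then forces the range $\delta \gg Q^{-\eta - 1/k}$ in the theorem. The most delicate step is this dyadic bookkeeping, in which the divisibility constraint $q \mid \bm$ must be carefully leveraged (via a divisor-sum bound $\sigma_Q(g) \ll g^{1+\varepsilon}$) in order to extract a genuine power saving past the critical exponent.
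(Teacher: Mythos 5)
Your strategy is essentially the paper's own: convert the count to $\sum_{q\le Q}\mu(\text{neighbourhood})$ via Ahlfors regularity, Fourier-expand so the zero mode gives the Lebesgue prediction $\delta^{k-\kappa_2}Q^{\kappa_2+1}$, and control the error by combining the divisibility constraint $q\mid\bm$ (divisor bound) with the Fourier $\ell^1$ decay; the paper packages exactly this error estimate as Theorem~\ref{thm: ETP for general measure} and then runs the regularity argument in \S\ref{RationalCountingProof}, summing dyadically in $q$ rather than all at once. Your deferred ``dyadic bookkeeping'' does close: the shells $\|\bm\|_\infty\sim 2^j$ contribute $\ll Q^{k}\rho^{\kappa-\eps}$ in total, and comparing with $\rho^kQ^{k+1}$ gives the threshold $\rho\gg Q^{-1/(k-\kappa)}$, which is $\delta\gg Q^{-1/k-\eta}$ precisely because \eqref{WeakAssumption} forces $k-\kappa<k/(k+1)$.

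One repair is needed as written: you apply Parseval with the sharp box indicator $B_t$, whose Fourier series is not absolutely convergent, and $\mu$ is singular (it can even charge the boundary hyperplanes of the boxes, e.g.\ when $\cK\supseteq[0,1]^{k-1}\times\{0\}$), so the identity $\mu(A_q)=\sum_{\bk}\widehat B_t(\bk)\widehat\mu(q\bk)$ is not automatic. Replace $B_t$ by smooth majorants and minorants (the bump $w_{\cR}$ of \S\ref{sec: rectangle}, with the annular bump for the lower bound), exactly as the paper does; this costs only constants in the radius and makes every interchange legitimate.
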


We can apply Theorem \ref{thm: Rational Counting} to count rational points on missing-digit fractals.

\begin{cor}
\label{CountingRationalPoints}
Let $k \in \bN$, and let $\cK$ be a proper missing-digit fractal in $[0,1]^k$ with Cantor--Lebesgue measure $\mu$ satisfying \eqref{WeakAssumption}. Then, for some constant $\eta > 0,$ 
\[
\# \cN_{\cK}(Q,0) \ll
Q^{(1+1/k)\Haus(\cK) - \eta}.
\]
\end{cor}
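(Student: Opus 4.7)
The plan is to deduce Corollary~\ref{CountingRationalPoints} as a direct consequence of Theorem~\ref{thm: Rational Counting} by taking $\delta$ as small as the hypothesis there allows. Since trivially $\cN_{\cK}(Q, 0) \subseteq \cN_{\cK}(Q, \delta)$ for every $\delta \ge 0$, any upper bound on $\#\cN_{\cK}(Q, \delta)$ transfers at once to $\#\cN_{\cK}(Q, 0)$. In particular, we only need the upper-bound half of the $\asymp$ in \eqref{HeuristicEstimate}.

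Concretely, write $s = \Haus(\cK)$ and let $\eta > 0$ be the constant supplied by Theorem~\ref{thm: Rational Counting}. Setting $\delta = c Q^{-\eta - 1/k}$ with a constant $c > 0$ large enough to satisfy the hypothesis $\delta \gg Q^{-\eta - 1/k}$, the theorem yields, for all sufficiently large $Q$,
\[
\#\cN_{\cK}(Q, 0) \;\le\; \#\cN_{\cK}(Q, \delta) \;\ll\; \delta^{k-s} Q^{s+1} \;\asymp\; Q^{-(\eta + 1/k)(k-s) + s + 1}.
\]
Expanding the exponent gives
\[
-(\eta + 1/k)(k - s) + s + 1 \;=\; \rbr{1 + \tfrac{1}{k}} s - \eta (k - s),
\]
where I used $-(1/k)(k-s) + s + 1 = (1 + 1/k)s$.

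The final step is to note that because $\cK$ is a \emph{proper} missing-digit fractal, at least one digit is forbidden in some coordinate, and so $s < k$. Consequently $\eta' := \eta(k - s) > 0$, and after relabelling $\eta' \mapsto \eta$ one obtains
\[
\#\cN_{\cK}(Q, 0) \;\ll\; Q^{(1 + 1/k) \Haus(\cK) - \eta},
\]
as claimed. There is no real obstacle here: the corollary is essentially the $\delta \to 0$ endpoint of Theorem~\ref{thm: Rational Counting}, and the only substantive input is that properness of $\cK$ converts the factor $\delta^{k-s}$ into a genuine power saving. All the analytic and arithmetic difficulty has already been absorbed into the proof of Theorem~\ref{thm: Rational Counting}.
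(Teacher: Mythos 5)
Your proposal is correct and is essentially identical to the paper's own proof: both set $\delta \asymp Q^{-\eta-1/k}$ in Theorem~\ref{thm: Rational Counting}, use $\cN_\cK(Q,0) \subseteq \cN_\cK(Q,\delta)$, and convert the factor $\delta^{k-\Haus(\cK)}$ into a power saving via $\Haus(\cK) < k$ (the paper simply pre-scales $\eta$ to $\eta/(k-\Haus(\cK))$ where you relabel at the end). No issues.
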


\begin{proof} By Theorem \ref{thm: Rational Counting} with $\eta/(k-\Haus(\cK))$ in place of $\eta$,
\begin{align*}
\# \cN_{\cK}(Q,0) &\le \# \cN_{\cK}(Q,Q^{-\eta-1/k})  \\ &\ll
Q^{-\eta} (Q^{-1/k})^{k-\Haus(\cK)} Q^{\Haus(\cK) + 1} = Q^{(1+1/k)\Haus(\cK) -\eta}.
\end{align*}
\end{proof}

This generalises the $k=1$ case, which was demonstrated in \cite{CVY}. Counting rationals in the middle-third Cantor set is considered to be a difficult problem, see for instance \cite{RSTW2020}. As explained in \cite{CVY}, the condition \eqref{WeakAssumption} fails for the middle-third Cantor set. The condition holds, for instance, when 
\[
k = 1, \qquad b \ge 5, \qquad \# \cD = b-1.
\]
See \S \ref{sec: special} for some situations in which $k \in \bN$ is arbitrary the condition holds.

The assumption \eqref{WeakAssumption} is essential for the proof of Theorem \ref{thm: Rational Counting}, but we suspect that the weaker assumption $\Haus(\mu) > k-\frac{k}{k+1}$ should suffice for the result. It is easy to see that such a condition cannot be dropped without imposing other conditions, e.g. that $\mu$ is not supported in any proper affine subspace. We illustrate with the following example, using the notation of \S \ref{sec: prep}.

\begin{ex} \label{ex: sharp counting}
Suppose $\cK = \cK_{b,\cD}$ is a missing-digit fractal in $[0,1]^k$ with
\[
\cD \supseteq \{ 0, 1, \ldots, b-1 \}^{k-1} \times \{ 0 \}.
\]
As $\cK$ contains $[0,1]^{k-1} \times \{ 0 \}$, we have
\[
\# \cN_{\cK}(Q,0) \gg Q^k.
\]
By \eqref{HausdorffMissing}, such a set $\cK$ exists with
\[
\Haus(\cK) < k - \frac{k}{k+1}.
\]
If moreover $\del \le Q^{-1/k}$, then
\[
\delta^{k - \Haus(\cK)} Q^{\Haus(\cK) + 1} \le Q^{(k+1)\Haus(\cK)/k} = o(Q^k).
\]
\end{ex}

The constant $\eta$ in Theorem \ref{thm: Rational Counting} can be chosen according to a lower bound for $\dim_{\ell^1}(\mu)$, which can be effectively computed using Theorem~\ref{l1lower}. In special cases, we obtain a bound for $\# \cN_\cK(Q,0)$ with an exponent that is arbitrarily close to best possible.

\begin{thm} \label{OtherBound}
Fix $\eta > 0.$ Then there exists a missing-digit fractal $\cK \supseteq [0,1]^{k-1} \times \{ 0 \}$ such that
\[
\# \cN_{\cK}(Q,0) \ll Q^{k+\eta},
\qquad \Haus(\cK) > k - \eta.
\]
\end{thm}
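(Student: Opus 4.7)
The plan is to take a product-type missing-digit fractal $\cK = [0,1]^{k-1} \times \cK_0$ with $\cK_0 \subset [0,1]$ a carefully chosen one-dimensional missing-digit fractal, and to deduce the count from the one-dimensional case of Corollary~\ref{CountingRationalPoints} applied to $\cK_0$. Given $\eta > 0$, choose a large integer $b$ (to be specified) and fix a digit $d^\ast \in \{1, \ldots, b-2\}$. Let $\cK_0 \subset [0,1]$ be the missing-digit fractal in base $b$ with digit set $\cD_0 = \{0, 1, \ldots, b-1\} \setminus \{d^\ast\}$, and set
\[
\cK = [0,1]^{k-1} \times \cK_0,
\]
which is a missing-digit fractal in $[0,1]^k$ with base $b$ and digit set $\cD = \{0, \ldots, b-1\}^{k-1} \times \cD_0$. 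Since $0 \in \cD_0$, we have $0 \in \cK_0$ and so $\cK \supseteq [0,1]^{k-1} \times \{0\}$; by the product rule $\Haus(\cK) = (k-1) + \log(b-1)/\log b$, which exceeds $k-\eta$ once $b$ is sufficiently large.

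For the counting, a pair $(\ba, q) \in \cN_\cK(Q, 0)$ requires $q \le Q$ and $a_k/q \in \cK_0$, with $a_1, \ldots, a_{k-1}$ arbitrary integers in $[0, q]$; therefore
\[
\# \cN_\cK(Q, 0) = \sum_{q=1}^{Q} (q+1)^{k-1} r_0(q) \ll Q^{k-1} R_0(Q),
\]
where $r_0(q) = \# \{ a \in \bZ : 0 \le a \le q, \, a/q \in \cK_0 \}$ and $R_0(Q) = \# \cN_{\cK_0}(Q, 0)$.

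As $b \to \infty$, the Cantor--Lebesgue measure $\mu_0$ on $\cK_0$ approaches Lebesgue measure on $[0,1]$, and via Theorem~\ref{l1lower} the Fourier $\ell^1$ dimension $\dim_{\ell^1}(\mu_0)$ approaches $1$; in particular, for $b$ large, $\mu_0$ satisfies the one-dimensional form of \eqref{WeakAssumption} (namely $\dim_{\ell^1}(\mu_0) > 1/2$), and applying Corollary~\ref{CountingRationalPoints} in dimension one yields $R_0(Q) \ll Q^{2 \Haus(\cK_0) - \eta_0}$ for a constant $\eta_0 > 0$ driven by $\dim_{\ell^1}(\mu_0)$. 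Writing $\delta = 1 - \Haus(\cK_0) = \log(b/(b-1))/\log b$, we obtain
\[
\# \cN_\cK(Q, 0) \ll Q^{k+1 - 2\delta - \eta_0}.
\]
Choosing $b$ large enough so that $\delta$ is small and $\eta_0 > 1 - 2\delta - \eta$ holds, the exponent drops below $k + \eta$, giving the theorem.

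The main obstacle is controlling the constant $\eta_0$. The trivial lower bound $R_0(Q) \gg Q$ (arising from $\{0, 1\} \subset \cK_0$) forces $\eta_0 \le 2\Haus(\cK_0) - 1 = 1 - 2\delta$, so we need the effective $\eta_0$ to approach its ceiling $1 - 2\delta$ sufficiently fast as $b \to \infty$. This requires a quantitative lower bound on $\dim_{\ell^1}(\mu_0)$ that, fed into the proof of Theorem~\ref{thm: Rational Counting}, yields an $\eta_0$ exceeding $1 - 2\delta - \eta$. In principle, Theorem~\ref{l1lower} provides such an effective computation for the digit set $\cD_0 = \{0, \ldots, b-1\} \setminus \{d^\ast\}$, amounting to a direct Fourier-analytic estimate of $\hat{\mu}_0$ via the product formula for the Fourier transform of $\mu_0$.
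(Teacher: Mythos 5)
There is a genuine quantitative gap, and it sits exactly where you flagged ``the main obstacle''. Your reduction $\# \cN_\cK(Q,0) \ll Q^{k-1} R_0(Q)$ is fine, but it transfers the entire difficulty to proving $R_0(Q) \ll Q^{1+\eta}$, i.e.\ you need $\eta_0 \ge 1 - 2\del - \eta$ where $\del = 1 - \Haus(\cK_0)$. Tracing through the proofs of Theorem \ref{thm: Rational Counting} and Corollary \ref{CountingRationalPoints} in dimension one, the exponent saving $\eta_0$ is governed by the competition between the two terms $\Delta^{1-\kap_2}Q^{\kap_2+1}$ and $\Delta^{\kap-\kap_2-\eps}Q^{\kap_2+1-\kap+2\eps}$ (writing $\kap = \dim_{\ell^1}(\mu_0)$, $\kap_2 = \Haus(\cK_0)$, $\Delta$ for the approximation radius), and one finds $\eta_0$ can approach its ceiling $1-2\del$ to within $\eta$ only if the gap $\gam := \kap_2 - \kap$ satisfies roughly $\gam \lesssim \eta\,\del$. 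With your choice of $\cD_0 = \{0,\dots,b-1\}\setminus\{d^*\}$ you have made $\del = \log(b/(b-1))/\log b \asymp 1/(b\log b)$ astronomically small, so you would need $\gam = o(1/(b\log b))$. No available estimate provides this: Theorem \ref{thm: l1 bound}(a) gives no rate at all, and Theorem \ref{thm: l1 bound}(b) — which in any case does not apply to your non-interval digit set when $d^* \in \{1,\dots,b-2\}$ — only gives $\gam \le \log(2\log b)/\log b$, which is vastly larger than $1/(b\log b)$. Invoking Corollary \ref{CountingRationalPoints} as a black box with ``some constant $\eta_0>0$'' therefore cannot close the argument; the whole content of the theorem is that $\eta_0$ can be pushed essentially to its ceiling, and that is precisely what is not established.

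The fix is to decouple the two small quantities: keep $k - \Haus(\cK)$ of size $\asymp \eta$ (bounded away from $0$) while sending $b \to \infty$ to make the $\ell^1$ gap tiny \emph{relative to $\eta$} (say $< \eta^3$), rather than trying to make $k - \Haus(\cK)$ itself tiny. Concretely, the paper takes $\cD = \{0,\dots,b-1\}^{k-1} \times \{0,\dots,a-1\}$ with $a \approx b^{1-\eta/2}$, so that the last-coordinate digit set is an interval (making Theorem \ref{thm: l1 bound}(b) applicable, giving $\kap_2 - \kap < \eta^3$ for large $b$) and $k - \kap_2 \in (\eta/2,\eta)$. It then reruns the two-term bound with $\del = Q^{-2/\eta}$ — far below the $Q^{-\eta-1/k}$ threshold of Theorem \ref{thm: Rational Counting}, so that neither term need dominate: the first is $\ll Q^{\kap_2} \le Q^k$ because $k-\kap_2 > \eta/2$, and the second is $\ll Q^{k+O(\eta^2)}$ because the exponent of $\del$ is only $O(\eta^3)$ in magnitude. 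Your product-fractal framing is compatible with this, but you must change the digit set and carry out the explicit optimisation rather than cite the corollary.
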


Note that if $\Haus(\cK) > k -\frac{k}{k+1}$ then the estimate 
\[
\# \cN_{\cK}(Q,0) \ll Q^{k+\eta}
\]
does not follow trivially from Corollary \ref{CountingRationalPoints}.
Since $\cK \supseteq [0,1]^{k-1} \times \{ 0 \}$, the bound is sharp up to a factor of $O(Q^\eta)$. However, we suspect that better estimates hold for missing-digit fractals that do not concentrate too much on affine subspaces.

\subsection{Intrinsic Diophantine approximation}

Although the main focus of this paper is extrinsic Diophantine approximation, we also establish new results in the intrinsic setting, wherein we approximate by rational vectors in $\cK$. The terminology was introduced by Mahler in the influential article \cite{Mahler84}. Let $\cK \subsetneq [0,1]^k$ be a missing-digit fractal with Cantor--Lebesgue measure $\mu$, and let $\psi:\mathbb{N} \to [0,1)$. We denote by $W^\cK_k(\psi)$ the set of $\bx = (x_1,\dots,x_k) \in [0,1)^k$ such that
\[
\max\left\{\left|x_1-\frac{a_1}{n}\right|,\dots,\left|x_k-\frac{a_k}{n}\right|\right\}<\frac{\psi(n)}{n}
\]
holds for infinitely many $(a_1,\dots,a_k,n) \in\mathbb{Z}^k \times \mathbb{N}$ for which
\[
\left(\frac{a_1}{n},\dots,\frac{a_k}{n}\right) \in \cK.
\]

For $\tau>0,$ we write $W_k^\cK(\tau) = W_k^K(n \mapsto n^{-\tau}).$ It follows from a result of Weiss \cite{W01} that
\begin{equation} \label{WeissCor}
\mu(W_1^\cK(\tau))=0 \qquad (\tau > 1).
\end{equation}
More generally, it follows from a result of Kleinbock--Lindenstrauss--Weiss \cite{KLW} that if $\mu$ satisfies an affine irreducibility condition then
\begin{equation} \label{KLWcor}
\mu(W_k^\cK(\tau)) = 0
\qquad (\tau > 1/k).
\end{equation}

\begin{conj}
[Broderick--Fishman--Reich \cite{BFR11}] \label{IntrinsicVWAconjecture} Let $k \in \bN$, and let $\cK$ be a proper missing-digit fractal in $[0,1]^k$ that is not contained in any proper affine subspace. Then, for all $\tau>0,$
\[
\mu(W_k^\cK(\tau)) = 0.
\]
\end{conj}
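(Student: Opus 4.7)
The plan is a convergence Borel--Cantelli argument, reducing the conjecture to a sharp counting bound for rationals in $\cK$. For missing-digit fractals, the Cantor--Lebesgue measure $\mu$ is Ahlfors regular of dimension $s := \Haus(\cK)$; in particular $\mu(B(\bx, r)) \asymp r^s$ for all $\bx \in \cK$ and $r \in (0,1]$. Writing
\[
W_k^{\cK}(\tau) = \limsup_{n \to \infty} \bigcup_{\ba \in \bZ^k :\, \ba/n \in \cK} B(\ba/n, n^{-\tau-1}),
\]
the convergence Borel--Cantelli lemma reduces the desired conclusion $\mu(W_k^\cK(\tau)) = 0$ to
\[
\sum_{n=1}^{\infty} R(n) \, n^{-(\tau+1)s} < \infty,
\]
where $R(n) := \#\{\ba \in \bZ^k \cap [0,n)^k : \ba/n \in \cK\}$.

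Next I would feed in Corollary~\ref{CountingRationalPoints}, working under the hypothesis \eqref{WeakAssumption}. Since $\sum_{n \le Q} R(n) = \#\cN_\cK(Q, 0) \ll Q^{(1+1/k)s - \eta}$ for some $\eta > 0$, Abel summation shows the series above converges provided $\tau > 1/k - \eta/s$. This quantitatively improves on the Kleinbock--Lindenstrauss--Weiss bound \eqref{KLWcor}, establishing Conjecture~\ref{IntrinsicVWAconjecture} for $\tau > 1/k - \eta/s$ and $\cK$ satisfying \eqref{WeakAssumption}. The non-degeneracy hypothesis on $\cK$ is genuinely needed here, since Corollary~\ref{CountingRationalPoints} can fail in the presence of affine subspace concentration (see Example~\ref{ex: sharp counting}).

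The crux is the remaining regime $0 < \tau \le 1/k - \eta/s$, together with the fractals not covered by \eqref{WeakAssumption}. Closing the former gap requires a dramatically sharper counting bound, morally $R(n) \ll_\eps n^{\eps}$, which is the natural fractal analogue of Serre's dimension growth conjecture. The most promising direction, in my view, is to bootstrap the variance transference principle of the paper to higher moments, and to combine the resulting Fourier $\ell^1$ decay with the arithmetic constraints that rationals $\ba/n \in \cK$ satisfy (eventually periodic base-$b$ expansions with digits in the restricted set $\cD$, sitting on short periodic orbits of $\times b$ relative to $\cK$). I expect this to be the principal obstacle: even in the one-dimensional case of the middle-third Cantor set, an essentially optimal bound on $R(n)$ is not currently known.
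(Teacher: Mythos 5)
The statement you were given is Conjecture \ref{IntrinsicVWAconjecture}, which the paper records as an open problem and does not prove; your proposal correctly recognises this and does not claim to close it. The partial result you do establish --- $\mu(W_k^\cK(\tau))=0$ for $\tau>1/k-\eta/s$ under \eqref{WeakAssumption}, via the $\limsup$ decomposition into balls around rationals, Corollary \ref{CountingRationalPoints}, Ahlfors regularity of $\mu$, and the first Borel--Cantelli lemma --- is precisely Theorem \ref{thm: Intrinsic App}, and the paper proves it by the same argument (organised over dyadic blocks $D_m$ rather than individual denominators, an immaterial difference). Your diagnosis of the remaining obstruction, namely a near-optimal count $R(n)\ll_\eps n^\eps$ of rationals in $\cK$ with denominator $n$, is consistent with the paper's discussion of why the full conjecture remains out of reach.
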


This conjecture was originally posed for the middle-third Cantor set, but we believe that it should hold in general. A recent result in \cite{CVY} asserts that if $\dim_{\ell^1}(\mu) > 1/2$ then there exists $\eta > 0$ such that 
\begin{equation}
\label{CVYintrinsic}
\mu(W_1^\cK(\tau))=0
\qquad (\tau > 1 - \eta).
\end{equation}
This refined \eqref{WeissCor}. Presently, we extend \eqref{CVYintrinsic} to higher dimensions, thereby refining \eqref{KLWcor}.

\begin{thm}
\label{thm: Intrinsic App}
Let $k \in \bN$, and let $\cK$ be a proper missing-digit fractal in $[0,1]^k$ with Cantor--Lebesgue measure $\mu$ satisfying \eqref{WeakAssumption}.
Then, for some $\eps > 0$, 
\[
\mu(W_k^\cK(\tau)) = 0
\qquad \left( \tau > \frac1k - \eps \right).
\]
\end{thm}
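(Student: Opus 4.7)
\bigskip

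\textbf{Proof sketch.} The plan is to combine the rational point counting bound of Corollary \ref{CountingRationalPoints} with the Ahlfors regularity of the Cantor--Lebesgue measure $\mu$, and then apply the convergent Borel--Cantelli lemma. The key observation is that, unlike in the extrinsic setting, we are restricting our rational approximants to lie on $\cK$ itself, so Corollary \ref{CountingRationalPoints} provides exactly the savings needed to beat the trivial threshold $\tau = 1/k$.

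First I would reduce to a covering argument. Given $\tau > 0$, I would write $W_k^\cK(\tau) \subseteq \limsup_{j \to \infty} E_j$, where for each $j \in \bN$,
\[
E_j = \bigcup_{\substack{2^{j-1} \le n < 2^j \\ \ba \in \bZ^k,\ \ba/n \in \cK}} B(\ba/n, n^{-\tau - 1}),
\]
with $B$ denoting the $\ell^\infty$ ball (cube). By the convergent Borel--Cantelli lemma, it suffices to prove $\sum_j \mu(E_j) < \infty$ for all $\tau$ slightly below $1/k$.

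Next, since $\mu$ is the Cantor--Lebesgue measure of a missing-digit fractal, it is Ahlfors $s$-regular with $s = \Haus(\cK) = \Haus(\mu)$; in particular, $\mu(B(\by, r)) \ll r^s$ uniformly over $\by \in \cK$ and $r \in (0,1]$. Since every centre $\ba/n$ lies in $\cK$, we obtain $\mu(B(\ba/n, n^{-\tau-1})) \ll 2^{-j(\tau+1)s}$ for $n \in [2^{j-1}, 2^j)$. By \eqref{WeakAssumption}, Corollary \ref{CountingRationalPoints} provides a constant $\eta > 0$ with $\# \cN_\cK(2^j, 0) \ll 2^{j((1+1/k)s - \eta)}$. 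Combining the two estimates yields
\[
\mu(E_j) \ll 2^{j((1+1/k)s - \eta)} \cdot 2^{-j(\tau+1)s} = 2^{j(s/k \,-\, \tau s \,-\, \eta)}.
\]

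This geometric series converges precisely when $\tau s > s/k - \eta$, i.e.\ $\tau > 1/k - \eta/s$. Setting $\eps := \eta/s > 0$ then gives $\mu(W_k^\cK(\tau)) = 0$ for all such $\tau$, as required. The substantive work is entirely packaged inside Corollary \ref{CountingRationalPoints} (which in turn rests on Theorem \ref{thm: Rational Counting}), so the main potential obstacle --- obtaining an exponent for rational-point counting strictly below the trivial $Q^{(1+1/k)\Haus(\cK)}$ threshold --- has already been overcome upstream. The deduction above is then essentially routine, with the only minor ingredient being the well-known Ahlfors regularity of missing-digit measures.
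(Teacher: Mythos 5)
Your argument is exactly the paper's: the same dyadic decomposition of $W_k^\cK(\tau)$ into unions of cubes around rational points of $\cK$, the same use of Corollary \ref{CountingRationalPoints} together with the $s$-regularity of $\mu$ to bound $\mu(E_j) \ll 2^{j(s/k - \tau s - \eta)}$, and the same conclusion via the first Borel--Cantelli lemma with $\eps = \eta/s$. The proposal is correct and matches the paper's proof.
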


We will see that this is a simple consequence of Corollary \ref{CountingRationalPoints}. Note that if \eqref{WeakAssumption} holds then $\mu$ cannot be supported in any proper affine subspace.

\subsection{Further related work}

One can simplify the problems of counting rationals and intrinsic approximation by using the intrinsic height \cite{FS2014} instead of the denominator. For the middle-third Cantor set, see the recent article \cite{TWW2024}.

Mahler asked if there are very well approximable numbers in the middle-third Cantor set that are irrational and non-Liouville. To solve this problem, Levesley--Salp--Velani \cite{LSV2007} resolved the metric theory of approximation by triadic rationals in the middle-third Cantor set.

Velani asked about the analogous problem for dyadic approximation. This is related to Furstenburg's `times two, times three' problem \cite{Fur1967}, and was first investigated in \cite{ACY2023}, with subsequent developments in \cite{Bak2024} and most recently \cite{ABCY2023}. Velani's conjecture is still very much open.

\subsection*{Organisation}

In \S \ref{sec: prep}, we gather some preliminaries in describe our general setup. In \S \ref{sec: ETP}, we establish the ETP. In \S \ref{sec: VTP}, we establish the VTP. In \S \ref{sec: final proofs}, we complete the proofs of Theorems \ref{thm: convergence}, \ref{thm: divergence}, \ref{HausdorffConvThm}, \ref{thm: Rational Counting}, \ref{OtherBound}, and \ref{thm: Intrinsic App}. In Appendix \ref{sec: morel1}, we further discuss Fourier $\ell^1$ dimension for missing-digit measures.

\subsection*{Notation}

For complex-valued functions $f$ and $g$, we write $f \ll g$ or $f = O(g)$ if there exists $C$ such that $|f| \le C|g|$ pointwise, we write $f \sim g$ if $f/g \to 1$ in some specified limit, and we write $f = o(g)$ if $f/g \to 0$ in some specified limit. We will work in $k$-dimensional Euclidean space, and the implied constants will always be allowed to depend on $k$, as well as on any parameters which we describe as fixed or constant. Any further dependence will be indicated with a subscript.

For $x \in \bR$, we write $e(x) = e^{2 \pi i x}$. For $r > 0$ and $\by \in \bR^k$, we write $B_r(\by)$ for the ball of radius $r$ centred at $\by$, and put $B_r = B_r(\bzero)$.

\subsection*{Funding}

HY was supported by the Leverhulme Trust (ECF-2023-186).

\subsection*{Rights}

For the purpose of open access, the authors have applied a Creative Commons Attribution (CC-BY) licence to any Author Accepted Manuscript version arising from this submission.

\section{Preparation}
\label{sec: prep}

In this section, we gather some preliminaries and describe our general setup.

\subsection{Missing-digit fractals and measures}
\label{sec: Cantor}

Let $k \ge 1$ and $b \ge 2$ be integers, and let $\bP$ be a probability measure on 
$\{ 0,1,\ldots,b-1 \}^k$.
A \emph{missing-digit measure} is the probability measure $\mu_{b,\bP}$ whose distribution is that of the random variable
\[
\sum_{j=1}^\infty \bd^{(j)}/b^j,
\]
where the $\bd^{(j)} \in 
\{0,1,\ldots,b-1\}^k$ are i.i.d. with distribution $\bP$. A \emph{missing-digit fractal} is the support of a missing-digit measure. Equivalently, a missing-digit fractal is
\[
\cK_{b,\cD} =
\left \{ \sum_{j=1}^\infty \bd^{(j)}/b^j: \bd^{(j)} \in \cD \right \} 
\subseteq [0,1]^k,
\]
where $k,b$ are as above and $\cD \subseteq \{ 0, 1, \ldots, b-1 \}^k$.
The \emph{Cantor--Lebesgue measure} of a missing-digit fractal $\cK_{b,\cD}$ is 
\[
\mu_{b, \cD} = \mu_{b,\bP}, \qquad
\text{where} \qquad
\bP(\bd) = \# \cD^{-1}
\quad (d \in \cD).
\]

\begin{ex} The Cantor set is 
$\cK_{b, \{0,2\}}$. 
The Cantor measure is the Cantor--Lebesgue measure of the Cantor set.
\end{ex}

A missing-digit fractal $\cK$ is \emph{proper} if $\# \cK \ge 2$ and $\cK \ne [0,1]^k$. A missing-digit measure is \emph{proper} if its support is proper.

\subsection{Notions of dimension}
\label{sec: dimension}

We begin our discussion with Hausdorff measure and dimension, which are standard concepts in fractal geometry \cite{Fal2014}. For $\cU \subseteq \bR^k$, we write $|\cU|$ for the diameter of $\cU$. 

Let $k \in \bN$ and $\cF \subseteq \bR^k$. For $s \ge 0$ and $\del > 0$, define
\[
H^s_\del(\cF) = \inf \left \{
\sum_{i=1}^\infty |\cU_i|^s: \cF \subseteq \displaystyle \bigcup_{i=1}^\infty \cU_i, \quad |\cU_i| \le \del \: \forall i \right \}.
\]
The \emph{Hausdorff $s$-measure} of $\cF$ is
\[
H^s(\cF) = \lim_{\del \to 0} H^s_\del(\cF),
\]
and the \emph{Hausdorff dimension} of $\cF$ is
\[
\Haus(\cF) = \inf \{ s \ge 0: H^s(\cF) = 0 \}.
\]
Hausdorff measure refines Lebesgue measure, in that $H^k(\cF) = c_k \lam_k(\cF)$ for some constant $c_k > 0$. Note that
\begin{equation}
\label{HausdorffMissing}
\Haus(\cK_{b,\cD}) = \frac{\log \# \cD}{\log b}.
\end{equation}

We equip ourselves the following basic tool, which is commonly used to establish upper bounds for Hausdorff measure and dimension.

\begin{lem} 
[Hausdorff--Cantelli lemma]
\label{HausdorffCantelli}
Let $k \in \bN$, let $\cE_1, \cE_2, \ldots$ be hypercubes in $\bR^k$, and put $\cE_\infty = \displaystyle \limsup_{n \to \infty} \cE_n$. Let $s > 0$, and suppose
\[
\sum_{j=1}^\infty |\cE_j|^s < \infty.
\]
Then $H^s(\cE_\infty) = 0$ and $\Haus(\cE_\infty) \le s$.
\end{lem}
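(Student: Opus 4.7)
The plan is to apply the definition of Hausdorff $s$-measure directly by using the tails $\bigcup_{j \geq N} \cE_j$ as coverings of $\cE_\infty$, then letting $N \to \infty$.

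First, I would observe that, by definition of the limit superior of sets,
\[
\cE_\infty = \bigcap_{N \geq 1} \bigcup_{j \geq N} \cE_j \subseteq \bigcup_{j \geq N} \cE_j \qquad (N \in \bN),
\]
so the family $\{ \cE_j \}_{j \geq N}$ is a cover of $\cE_\infty$ for every $N$. Next, since $\sum_j |\cE_j|^s$ converges and $s > 0$, we must have $|\cE_j| \to 0$ as $j \to \infty$. Consequently, for any $\del > 0$ there exists $N_\del \in \bN$ such that $|\cE_j| \leq \del$ for all $j \geq N_\del$.

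For any $\del > 0$ and any $N \geq N_\del$, the covering of $\cE_\infty$ by $\{ \cE_j \}_{j \geq N}$ is admissible in the definition of $H^s_\del$, giving
\[
H^s_\del(\cE_\infty) \leq \sum_{j \geq N} |\cE_j|^s.
\]
The right-hand side is the tail of a convergent series, so letting $N \to \infty$ yields $H^s_\del(\cE_\infty) = 0$. Since this holds for every $\del > 0$, passing to the limit $\del \to 0$ gives $H^s(\cE_\infty) = 0$. The bound $\Haus(\cE_\infty) \leq s$ is then immediate from the definition of Hausdorff dimension as the infimum of those $s \geq 0$ with $H^s(\cE_\infty) = 0$.

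There is no real obstacle here; this is a classical and standard lemma, and the only mild point to verify is that the hypercubes $\cE_j$ themselves are legitimate covering sets, which is immediate since each has diameter exactly $|\cE_j|$. Modulo this observation, the proof is a three-line application of the definition.
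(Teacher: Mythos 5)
Your proof is correct and complete: covering $\cE_\infty \subseteq \bigcup_{j \ge N} \cE_j$, noting that convergence of $\sum_j |\cE_j|^s$ forces $|\cE_j| \to 0$ so that the tails are admissible $\delta$-covers, and letting $N \to \infty$ is exactly the standard argument. The paper does not prove the lemma itself but simply cites \cite[Lemma 3.10]{BD1999}, and your argument is precisely the classical proof of that cited result, so there is nothing to fault.
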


\begin{proof} This is \cite[Lemma 3.10]{BD1999}.
\end{proof}

\bigskip

Next, we recall the Fourier $\ell^t$ dimensions, which were brought into this subject in \cite{Yu}. Let $\mu$ be a Borel probability measure supported on $[0,1]^k$. For $\bxi \in \bZ^k$, the $\bxi^{\mathrm{th}}$ \emph{Fourier coefficient} of $\mu$ is
\[
\hat \mu(\bxi) = \int_{[0,1]^k} e(-\bxi \cdot \bx) \d \mu(\bx).
\]
The \emph{Fourier $\ell^t$ dimension} of $\mu$ is
\[
\dim_{\ell^t}(\mu) = \sup \left \{ s \ge 0:
\sum_{\| \bxi \|_\infty \le Q} |\hat \mu(\bxi)|^t \ll Q^{k - s} \right \}.
\]
It follows from the Cauchy--Schwarz inequality that
\[
\frac{\dim_{\ell^2}(\mu)}{2} \le \dim_{\ell^1}(\mu) \le \dim_{\ell^2}(\mu).
\]

For $s > 0$, we say that $\mu$ is \emph{$s$-regular} if
\[
\mu(B_r(\by)) \asymp r^s \qquad
(\by \in \supp(\mu), \quad 0 < r \le 1).
\]
Note that the Cantor--Lebesgue measure of a missing-digit fractal $\cK$ is $s$-regular with $s = \dimh(\cK)$, see \cite[Theorem 3.9]{Far2015}. By mimicking the continuous analogue in \cite[\S 3.8]{Mat2015}, it is readily confirmed that if $\mu$ is $s$-regular then
\[
\dim_{\ell^2}(\mu) = \Haus(\supp(\mu)) = s.
\]
Note also that if $\mu$ is a missing-digit measure and $\Haus(\supp(\mu)) = s$ then
\begin{equation}
\label{MeasureUpper}
\mu(B_r(\by)) \ll r^s \qquad
(\by \in \bR^k, \quad 0 < r \le 1).
\end{equation}

\subsection{Moment transference principles}
\label{sec: MTP}

For any Borel probability measure $\mu$ on  $[0,1]^k$ and any $f \in L^1(\mu)$, we write
\[
\mu(f) = \int_{[0,1]^k} f \d \mu.
\]
Let $\lam$ be Lebesgue measure on $[0,1]^k$. Let $\cE_1, \cE_2, \ldots$ be bounded Borel subsets of $\bR^k$, and put $\cE_\infty = \displaystyle \limsup_{n \to \infty} \cE_n$. For $n \in \bN$, let $f_n: \bR^k \to [0,\infty)$ be compactly supported and measurable.

Define
\begin{align*}
E_N(\mu) = \sum_{n \le N} \mu(f_n), \qquad
V_N(\mu) = \int_{[0,1]^k} \left( \sum_{n \le N} (f_n(\bx) - \lam(f_n)) \right)^2 \d \mu(\bx).
\end{align*}
The \emph{expectation transference principle} (ETP) holds if
\[
E_N(\mu) = (1 + o(1)) E_N(\lam) + O(1) \qquad (N \to \infty).
\]
The \emph{variance transference principle} (VTP) holds if
\[
V_N(\mu) = V_N(\lam) + o(E_N(\mu)^2) + O(1),
\]
along a sequence of $N \to \infty$.

\begin{lem} [General convergence theory conclusion]
\label{GenConv}
Let $\psi: \bN \to [0,1)$. Assume that $f_n \gg 1$ on $\cE_n$ for all sufficiently large $n \in \bN$, that
\begin{equation}
\label{LebesgueConvergence}
\sum_{n=1}^\infty \lam(f_n) < \infty,
\end{equation}
and that we have ETP for this sequence of functions. Then $\mu(\cE_\infty) = 0$.
\end{lem}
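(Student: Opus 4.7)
The plan is to bootstrap the hypothesis \eqref{LebesgueConvergence} through ETP into a convergent Borel--Cantelli sum for $\mu$, and then apply the classical convergence half of the Borel--Cantelli lemma to the sets $\cE_n$. All of the substantive analytic content is packaged inside the ETP itself, so this lemma should be essentially a bookkeeping step.

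First, I would note that \eqref{LebesgueConvergence} says precisely that $E_N(\lam)$ is bounded uniformly in $N$. Applying ETP,
\[
E_N(\mu) = (1+o(1)) E_N(\lam) + O(1) = O(1)
\qquad (N \to \infty),
\]
so letting $N \to \infty$ by monotone convergence (each $f_n$ is non-negative) gives
\[
\sum_{n=1}^\infty \mu(f_n) = E_\infty(\mu) < \infty.
\]

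Next, the hypothesis $f_n \gg 1$ on $\cE_n$ for all sufficiently large $n$ (say $n \ge n_0$) together with non-negativity of $f_n$ yields
\[
\mu(\cE_n) \ll \int_{\cE_n} f_n \, \d\mu \le \mu(f_n)
\qquad (n \ge n_0).
\]
Summing,
\[
\sum_{n=1}^\infty \mu(\cE_n) \ll \sum_{n=1}^\infty \mu(f_n) + n_0 < \infty.
\]
The convergence half of the Borel--Cantelli lemma applied to the Borel probability measure $\mu$ and the sets $\cE_n$ then gives $\mu(\cE_\infty) = \mu(\limsup_n \cE_n) = 0$, completing the proof.

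There is no real obstacle in the argument itself: once the ETP is in hand, the lemma reduces to monotone convergence plus Borel--Cantelli. The actual work of the paper lies in verifying the ETP hypothesis for concrete choices of $(f_n)$ and missing-digit measures $\mu$, which is deferred to \S \ref{sec: ETP}; this lemma simply records the clean conclusion one extracts whenever ETP is available and the Lebesgue sum converges.
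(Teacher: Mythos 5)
Your argument is correct and is exactly what the paper intends: the paper's proof simply defers to \cite[Lemma 2.6]{CY}, which runs the same way --- ETP converts the convergent Lebesgue sum into $\sum_n \mu(f_n) < \infty$, the lower bound $f_n \gg 1$ on $\cE_n$ gives $\sum_n \mu(\cE_n) < \infty$, and the convergence Borel--Cantelli lemma finishes.
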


\begin{proof} Replace $A_n^\times$ by $\cE_n$ in the proof of \cite[Lemma 2.6]{CY}.
\end{proof}

\begin{lem} [General divergence theory conclusion]
\label{GenDiv}
Let $C \ge 1$ and $\psi: \bN \to [0,1)$. For $n \in \bN$, suppose $f_n$ is supported on $\cE_n$. Suppose we have ETP and VTP for this sequence of functions, as well as
\begin{equation}
\label{LebesgueDivergence}
\sum_{n=1}^\infty \lam(f_n) = \infty
\end{equation}
and
\begin{equation}
\label{LebesgueQIA}
\sum_{m,n \le N} \lam(f_m f_n) \le
C E_N(\lam)^2 + O(1).
\end{equation}
Then $\mu(\cE_\infty) \ge 1/C$.
\end{lem}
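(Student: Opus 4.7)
The plan is to apply the Paley--Zygmund inequality to the partial sums $S_N(\bx) = \sum_{n \le N} f_n(\bx)$. Because each $f_n$ is supported on $\cE_n$, if $S_N(\bx) \to \infty$ along some subsequence of $N$, then $\bx$ lies in $\cE_n$ for infinitely many $n$, that is, $\bx \in \cE_\infty$. The combination of ETP with the Lebesgue divergence hypothesis \eqref{LebesgueDivergence} forces $E_N(\mu) \to \infty$, so for every fixed $\theta \in (0,1)$ the limsup of the level sets $A_N^{(\theta)} = \{\bx : S_N(\bx) > \theta E_N(\mu)\}$ is contained in $\cE_\infty$.

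Next I would bound $\mu(S_N^2)$ from above. Expanding $(S_N - E_N(\lam))^2$ inside the integral against $\mu$ yields the identity
\[
\mu(S_N^2) = V_N(\mu) + 2 E_N(\lam) E_N(\mu) - E_N(\lam)^2,
\]
and the same expansion for $\lam$ together with the quasi-independence hypothesis \eqref{LebesgueQIA} produces $V_N(\lam) = \lam(S_N^2) - E_N(\lam)^2 \le (C-1) E_N(\lam)^2 + O(1)$. Feeding in VTP, and then replacing $E_N(\lam)$ by $(1+o(1)) E_N(\mu) + O(1)$ via ETP, one obtains, along the sequence of $N$ on which VTP holds,
\[
\mu(S_N^2) \le (C + o(1)) E_N(\mu)^2.
\]

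The Paley--Zygmund inequality then gives
\[
\mu\bigl(A_N^{(\theta)}\bigr) \ge (1-\theta)^2 \frac{E_N(\mu)^2}{\mu(S_N^2)} \ge \frac{(1-\theta)^2}{C + o(1)}
\]
along the same sequence. The reverse Fatou lemma (valid since $\mu$ is a finite measure) upgrades this to $\mu(\cE_\infty) \ge \mu(\limsup_N A_N^{(\theta)}) \ge (1-\theta)^2 / C$, and sending $\theta \to 0^+$ yields $\mu(\cE_\infty) \ge 1/C$.

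The main point requiring care is purely one of bookkeeping: confirming that the $o(E_N(\mu)^2)$ errors from VTP and ETP, together with the $O(1)$ remainders from \eqref{LebesgueQIA} and ETP, are all negligible against the leading $E_N(\mu)^2$ term. This is guaranteed precisely because \eqref{LebesgueDivergence} together with ETP forces $E_N(\mu) \to \infty$; without this divergence, neither the Paley--Zygmund ratio nor the limsup/reverse Fatou step would carry any content.
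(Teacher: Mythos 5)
Your argument is correct and is essentially the paper's proof with its two black boxes opened up: the rearrangement of VTP into $\mu(S_N^2) \le (C+o(1))E_N(\mu)^2 + O(1)$ is exactly the second-moment transference the paper cites as \cite[Lemma 2.5]{CY} combined with \eqref{LebesgueQIA} and ETP, and your Paley--Zygmund plus reverse-Fatou step is precisely the functional divergence Borel--Cantelli lemma \cite[Lemma 2.7]{CY} that the paper invokes to finish. The bookkeeping you flag (divergence of $E_N(\mu)$ via ETP and \eqref{LebesgueDivergence}, monotonicity of $S_N$ so that $\limsup_N A_N^{(\theta)} \subseteq \cE_\infty$) is handled correctly.
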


\begin{proof} The ETP and the divergence of $\sum_n \lam(f_n)$ give
\[
\sum_{n=1}^\infty \mu(f_n) = \infty.
\]
By \cite[Lemma 2.5]{CY},
\[
\sum_{m,n \le N} \mu(f_m f_n) = \sum_{m,n \le N} \lam(f_m f_n) + o(E_N(\mu)^2) + O(1),
\]
along a sequence of $N \to \infty$. Combining this with \eqref{LebesgueQIA} and the ETP yields
\[
\sum_{m,n \le N} \mu(f_m f_n) \le (C + o(1)) E_N(\mu)^2 + O(1)
\]
along this sequence. Functional divergence Borel--Cantelli \cite[Lemma 2.7]{CY} completes the proof.
\end{proof}

\subsection{Setup}
\label{sec: rectangle}

Our analysis involves decomposing the problem into rectangles, scaling bump functions accordingly, and passing to Fourier space. Let $w: \bR \to [0,1]$ be a non-zero bump function supported on $[-2,2]$. This has the property that
\begin{equation}
\label{decay}
\hat w(\xi) \ll_L 
|\xi|^{-L},
\end{equation}
for any $L \ge 1$. The function $w$ will approximate $\{ x: |x| \le 1\}$ or $\{ x: 1/2 \le |x| \le 1 \}$.

Next, we adapt $w$ to certain rectangles. Let $\by \in \bR^k$ and $\bd \in (0,1/2]^k$, and define
\begin{align*}
A_n(\bd) &=
\{ \bx \in [0,1]^k:
\| n x_j - y_j \| < n d_j \quad (1 \le j \le k) \}, \\
A^\circ_n(\bd) &=
\{ \bx \in [0,1]^k:
n d_j/2 < \| n x_j - y_j \| < n d_j \quad (1 \le j \le k) \},\\
\cR(\bd) &= [-d_1, d_1] \times
\cdots \times [-d_k, d_k], \\
\cR^\vee(\bd) &= [-1/d_1,1/d_1] \times \cdots \times [-1/d_k, 1/d_k], \\
w_{\cR(\bd)} (\bx) &= \prod_{j \le k} w(x_j/d_j).
\end{align*}
Then
\[
\widehat{w_{\cR(\bd)}}(\bxi) = \prod_{j \le k} d_j \hat w(d_j \xi_j).
\]
Note that $w_{\cR(\bd)}$ is normalised so that
\[
\widehat{w_{\cR(\bd)}}(\bzero)\asymp d_1\cdots d_k.
\]
If $\bd$ is clear from the context, then we will simply write $\cR$ for $\cR(\bd)$ and $\cR^\vee$ for $\cR^\vee(\bd)$. By \eqref{decay}, we thus have
\begin{equation}
\label{decay2}
\frac{\widehat{w_\cR}(\bxi)}
{d_1 \cdots d_k} \ll 
2^{-mL}
\qquad
(\bxi \in 2^m \cR^\vee \setminus 2^{m-1} \cR^\vee).
\end{equation}

We see that $A_n(\bd)$ and $\cR$ have the same shape, and note that $\cR^\vee$ is the dual rectangle of $\cR.$ The bump function $w_{\cR}$ is supported on 
\[
2 \cR := \{ 2 \bx: \bx \in \cR \}.
\]
Its Fourier transform $\hat{w}_{\cR}$ is essentially supported on $\cR^\vee.$ For any shift $\bgam\in\mathbb{R}^k,$ the shifted function $w_{\cR}(\cdot-\bgam)$ is supported on $\cR+\bgam$, and its Fourier transform is still essentially supported on $\cR^\vee.$ These considerations can be used to establish ETP and VTP for unions of rectangles. 

We will work with a smooth, periodically extended version of the indicator function of $A_n(\bd)$, namely
\begin{equation}
\label{AnStar}
A^*_n(\bx;\bd) = \sum_{\ba \in  \mathbb{Z}^k} w_{\cR(\bd)} \left( \bx - \frac{\ba+\by}{n} \right),
\end{equation}
where $w$ approximates $\{ x: |x| \le 1 \}$. By choosing $w$ to instead approximate $\{ x: 1/2 \le |x| \le 1 \}$, we can alternatively use this same expression to approximate $A_n^\circ(\bd)$.

For the simultaneous theory, we use $A_n(\bd)$ with $\bd =(d,d,\dots,d)$ for some $d>0.$ Thus, the corresponding $\cR$ and $\cR^\vee$ are all hypercubes. 

For the multiplicative theory, the situation is more complicated, as we need to consider boxes of different shapes in order to approximate 
\[
A_n^\times := \{ \bx \in [0,1)^k:
\| n x_1 - y_1 \| \cdots
\| n x_k - y_k \| < \psi(n) \}.
\]
We write
\[
h_i = 2^{-i}
\]
for each integer $i$ in some range. The range is given by
\[
\frac{\psi(2^{m-1})}{2^{m-1}} \le h_i \le \frac1{2^{m-1}}
\]
in the convergence setting, for
\[
n \in D_m := [2^{m-1}, 2^m).
\]
Fixing a small constant $\tau > 0$, the range in the divergence setting is
\[
\frac1{2^{(1+(1+\tau)/k)m}} \ll h_i \ll \frac1{2^{(1+(1-\tau)/k)m}}.
\]
Writing
\[
h h_{i_1} \cdots h_{i_{k-1}} = \frac{\psi(2^m)}{2^{km}}, \qquad
H h_{i_1} \cdots h_{i_{k-1}} = \frac{\psi(2^{m-1})}{2^{k(m-1)}}
\]
and
\begin{align*}
\fB_n &= \bigsqcup_{h_{i_1},\ldots,
h_{i_{k-1}}}
A_n^\circ(h_{i_1}, \ldots, h_{i_{k-1}}, h), \\
\fC_n &= \bigcup_{h_{i_1},\ldots,
h_{i_{k-1}}}
A_n(h_{i_1}, \ldots, h_{i_{k-1}}, 2^{k-1}H),
\end{align*}
we then have
\[
\fB_n \subseteq A_n^\times \subseteq \fC_n.
\]

\subsection{Admissible set systems and admissible function systems}

For each $m \in \bN$, we choose an index set $I_m$ and a threshold
\[
\fd_m \gg \frac1{n^{1+(1+\tau)/k}}
\qquad (n \in D_m).
\]
For each $i \in I_m$ and each $j \in \{1,2,\ldots,k \}$, we choose $d_{i,j} = d_{i,j,m}$ such that
\[
n^{-\tau} \fd_m \ll d_{i,j} \ll n^{\tau} \fd_m  \qquad (n \in D_m),
\]
and such that $\prod_{j \le k} d_{i,j}$ does not depend on $i$. For each $n \in D_m$, we introduce a disjoint union
\[
A_n = \bigsqcup_{i \in I_m} A_n^\circ(d_{i,1},\ldots,d_{i,k}).
\]
An \emph{admissible set system} is a sequence $(A_n)_{n=1}^\infty$ obtained in this way.

For the Khinchin theory, we choose $I_m$ to be a singleton for each $m$ and $d_{i,j}$ all equal.

\begin{ex} 
[Simultaneous approximation]
\label{SimOK}
We claim that for the divergent Khinchin theory, we may assume that
\[
\psi(n) \ge \psi_L(n) :=
(n (\log n)^2)^{-1/k}
\qquad (n \ge 2).
\]
To see this, put $\tilde \psi = \max \{ \psi, \psi_L \}$ and note that
\[
W_k(\tilde \psi; \by) = W_k(\psi;\by) \cup W_k(\psi_L;\by).
\]
As $W_k(\psi_L;\by)$ has zero measure, by the convergence theory, the set $W_k(\tilde \psi;\by)$ has the same measure as $W_k(\psi;\by)$, and we may therefore replace $\psi$ by $\tilde \psi$. For $n \in D_m$, we can then use $A_n^\circ(d,\ldots,d)$ with
\[
d \asymp \frac{\psi(2^m)}{2^m}.
\]
\end{ex}

\bigskip

For the Gallagher theory, we form an admissible set system as follows. 

\begin{ex} 
[Multiplicative approximation]
\label{MultOK}
We claim that for the divergent Gallagher theory we may assume that
\[
\psi(n) \ge \psi_L(n) := \frac1{n (\log n)^{k+1}} \qquad (n \ge 2).
\]
To see this, put $\tilde \psi = \max \{ \psi, \psi_L \}$ and note that
\[
W_k^\times(\tilde \psi; \by) = W_k^\times(\psi; \by) \cup W_k^\times(\psi_L; \by).
\]
As $W_k^\times(\psi_L; \by)$ has zero measure, by the convergence theory, the set $W_k^\times(\tilde \psi; \by)$ has the same measure as $W_k^\times(\psi; \by)$, and we may therefore replace $\psi$ by $\tilde \psi$.
For $n \in D_m$, we can then pack $A_n^\times$ with roughly $m^{k-1}$ many sets $$A_n^\circ(d_{i,1},\ldots,d_{i,k})$$ as above, forming an admissible set system with
\[
\prod_{j \le k} d_{i,j} \asymp \frac{\psi(2^m)}{2^{km}}.
\]
\end{ex}

\begin{rem} \label{AssumeUpper}
For the divergent Khinchin theory, we may also assume that
\[
\psi(2^{m}) \ll 2^{-m/k} \qquad (m \in \bN).
\]
Indeed, if $\psi(2^{m}) > 2^{-m/k}$ for infinitely many $m$, then we may replace $\psi$ by a non-increasing function $\psi': \bN \to [0,1)$ such that 
\[
\psi' \le \psi, \qquad
\psi'(2^{m}) \ll 2^{-m/k} \quad (m \in \bN), \qquad
\sum_{n=1}^\infty \psi'(n)^k = \infty.
\]
To construct $\psi'$, one can define
\[
\psi'(n) = 2^{-m/k} \qquad (n \in D_m, \quad \psi(2^m) > 2^{-m/k})
\]
and interpolate between these ranges.

Similarly, for the divergent Gallagher theory, we may assume that
\[
\psi(2^{m}) \ll 2^{-m} \qquad (m \in \bN).
\]
These observations simplify matters, but our approach is robust enough to succeed even without them.
\end{rem}

\bigskip

Next, we introduce a functional analogue. Let $(A_n)_{n=1}^\infty$ be an admissible set system. For $n \in D_m$, let
\[
f_n(\bx) = \sum_{i \in I_m} A_n^*(\bd^{(i)};\bx),
\]
where
\[
\bd^{(i)} = (d_{i,1}, \ldots, d_{i,k}),
\]
and $A_n^*(\bd;\bx)$ is given by \eqref{AnStar}.
Furthermore, suppose the bump function $w$, from \S \ref{sec: rectangle}, is supported on 
\[
\{ x \in \bR: 1/2 \le |x| \le 1 \}.
\]
An \emph{admissible function system} is a sequence $(f_n)_{n=1}^\infty$ constructed in this way.

\subsection{The Lebesgue second moment}

In the Gallagher setting, this was estimated in our previous article, see \cite[Remark 6.3, Theorem~6.5, and Lemma 6.6]{CY}. 

\begin{thm} [Chow--Yu \cite{CY}]
\label{LebesgueEstimate}
Let $\psi: \bN \to [0,1)$ be non-increasing, and let $k \ge 2$ be an integer. To any non-zero bump function $\bR \to [0,1]$ supported on $[-2,2]$, we may associate a sequence of smooth functions $(f_n)_{n=1}^\infty$ in $\bR^k$ as in Example \ref{SimOK} or \ref{MultOK}. There exists such a bump function $w$ supported on $\{ x \in \bR: 1/2 \le |x| \le 1 \}$ such that, for some $C = C(k,w) > 1$ arbitrarily close to 1, we have \eqref{LebesgueQIA}. 
\end{thm}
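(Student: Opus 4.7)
The plan is to follow closely the argument carried out in the authors' prior work \cite{CY}, where an essentially identical quasi-independence-on-average estimate was established in the homogeneous setting. The inhomogeneous shift by $\by$ plays no substantive role: translating every rectangle by the common shift $\by/n$ or $\by/m$ preserves all overlap volumes, and so the task reduces to explaining how the prior argument transfers to the setup of \S\ref{sec: rectangle}.

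First I would expand
\[
\lam(f_m f_n) = \sum_{i \in I_m} \sum_{i' \in I_n} \sum_{\ba,\ba' \in \bZ^k} \int_{\bR^k} w_{\cR(\bd^{(i)})}\!\left(\bx - \tfrac{\ba+\by}{m}\right) w_{\cR(\bd^{(i')})}\!\left(\bx - \tfrac{\ba'+\by}{n}\right) \d \bx,
\]
which, after the change of variable $\bx \mapsto \bx + (\ba+\by)/m$, is a convolution of two product bump functions evaluated at the rational shift $(\ba+\by)/m - (\ba'+\by)/n$. I split the outer $(m,n)$ sum into the diagonal $m=n$ and the off-diagonal $m \neq n$. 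For the diagonal, crude estimates give $\lam(f_n^2) \ll \lam(f_n)$ up to a bounded factor, which yields a total $O(E_N(\lam))$ contribution; under the divergence of $\sum_n \lam(f_n)$ this is absorbed into the stated $O(1)$ error, and otherwise into $o(E_N(\lam)^2)$.

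The substantive work is the off-diagonal sum. Distinct fractions $\ba/m, \ba'/n$ with $m,n \le N$ are separated by at least $1/(mn)$, so grouping pairs $(\ba,\ba')$ by the value of the integer vector $n\ba - m\ba'$ and applying the rapid decay \eqref{decay2} for $\widehat{w_\cR}$, one shows that only the pairs corresponding to non-overlapping rectangles contribute a near-product $\lam(f_m)\lam(f_n)(1 + O_L(2^{-mL}))$, whilst the remaining pairs form a sparse set whose total contribution is bounded by the divisor-style count of \cite[Lemma 6.6]{CY}. Summing over $m \neq n \le N$ produces $(1+\eps) E_N(\lam)^2 + O(1)$ for any fixed $\eps > 0$, provided $w$ is chosen appropriately.

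The main obstacle --- and the reason the constant $C$ is close to $1$ rather than equal to $1$ --- is the multitude of aspect ratios of the admissible rectangles in the multiplicative setting of Example \ref{MultOK}. Pairings between boxes of very different shapes produce modest geometric inflation factors that must be controlled by careful choice of $w$. Taking $w$ supported in the annulus $\{x \in \bR : 1/2 \le |x| \le 1 \}$, rather than in $\{x \in \bR : |x| \le 1\}$, suppresses contributions from neighbouring rationals and ensures that the resulting constant lies in $(1, 1 + \eps)$. The remaining bookkeeping of volumes and divisor sums is exactly that of \cite[Remark 6.3, Theorem 6.5, Lemma 6.6]{CY}, and the cited proofs go through in the present setting essentially verbatim, so the theorem follows.
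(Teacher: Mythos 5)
The paper gives no proof of this theorem beyond the citation to \cite[Remark 6.3, Theorem 6.5, Lemma 6.6]{CY}, and your proposal does exactly the same thing: it defers the substance to those results and sketches, plausibly, how the homogeneous argument there transfers to the shifted setup (correctly observing that the inhomogeneous translate $\by/n$ preserves all overlap volumes, and that the annulus-supported $w$ is what keeps the constant near $1$). This matches the paper's treatment, so there is nothing further to compare.
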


\section{Expectation transference principle}
\label{sec: ETP}

In this section, we establish some ETP results. For notational convenience, we write
\[
\tilde E_N(\mu) = E_{2N-1}(\mu) - E_{N-1}(\mu) - E_{2N-1}(\lam) + E_{N-1}(\lam).
\]

\begin{thm} [Dyadic split ETP]
\label{thm: ETP}
Let $N \in \bN$ and $\bd \in (0,1/2]^k$. For each $n \in [N,2N)$, we associate $A_n^*$ to $\bd$ as in \S \ref{sec: rectangle}. Let $\mu = \mu_1 \times \dots \times \mu_k$ be a product of probability measures on $[0,1]$ with $\dim_{\ell^1}(\mu_j) = \kappa_j$ for each $j.$ Then, for any $\eps > 0$,
\[
\tilde E_N(\mu) \ll_\eps 
N^{k+\eps} \prod_{j=1}^{k} d_j^{\kap_j - \eps}.
\]
\end{thm}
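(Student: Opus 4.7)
The plan is to pass to Fourier space via Poisson summation, exploit the product structure of $\mu$, and then perform a change of variables that converts the outer sum over $n$ into an arithmetic sum indexed by divisors of $\gcd(\bm)$. Since $f_n = A_n^*(\cdot; \bd)$ is $1/n$-periodic on $[0,1]^k$, Poisson summation gives
\[
f_n(\bx) = n^k \sum_{\bk \in \bZ^k} \widehat{w_{\cR(\bd)}}(n\bk) \, e(-\bk \cdot \by) \, e(n \bk \cdot \bx).
\]
Integrating against $\mu = \mu_1 \times \cdots \times \mu_k$ (which factorizes as $\hat \mu(\bxi) = \prod_j \hat \mu_j(\xi_j)$) and against Lebesgue measure, then subtracting the $\bk = \bzero$ term, yields
\[
\mu(f_n) - \lam(f_n) = n^k \prod_{j=1}^{k} d_j \sum_{\bk \ne \bzero} e(-\bk \cdot \by) \prod_{j=1}^{k} \hat w(n d_j k_j) \, \hat \mu_j(-n k_j).
\]

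The key manipulation is to sum over $n \in [N, 2N)$, substitute $\bm := n\bk$, and then swap the order of summation to obtain
\[
\tilde E_N(\mu) = \prod_{j=1}^{k} d_j \sum_{\bm \ne \bzero} \prod_{j=1}^{k} \hat w(d_j m_j) \, \hat \mu_j(-m_j) \cdot T(\bm),
\]
where
\[
T(\bm) := \sum_{\substack{n \in [N,\, 2N) \\ n \mid \gcd(\bm)}} n^k \, e(-\bm \cdot \by / n).
\]
The quantity $T(\bm)$ vanishes unless $\gcd(\bm) \ge N$, in which case $|m_j| \ge N$ for every $j$ with $m_j \ne 0$, and the divisor bound gives $|T(\bm)| \ll N^k \, d(\gcd(\bm)) \ll_\eps N^k |m_{j_0}|^{\eps}$ for any $j_0$ with $m_{j_0} \ne 0$.

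Taking absolute values and using the product structure of $\bm$, the sum factorizes across coordinates. For each one-dimensional factor, a dyadic decomposition in $|m_j|$ combined with the rapid decay $|\hat w(\xi)| \ll_L (1+|\xi|)^{-L}$ and the Fourier $\ell^1$ estimate $\sum_{|m| \le Q} |\hat \mu_j(m)| \ll_\eps Q^{1 - \kappa_j + \eps}$ yields
\[
\sum_{|m_j| \ge N} |\hat w(d_j m_j)| \, |\hat \mu_j(m_j)| \ll_\eps d_j^{\kappa_j - 1 - \eps},
\]
with the bulk coming from $|m_j| \lesssim 1/d_j$ and $L$ chosen large enough to suppress the tail uniformly across both regimes $d_j \le 1/N$ and $d_j > 1/N$. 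Multiplying $k$ such bounds, and absorbing the $|m_{j_0}|^{\eps}$ factor into a single coordinate, produces
\[
|\tilde E_N(\mu)| \ll_\eps N^k \prod_{j=1}^{k} d_j \cdot \prod_{j=1}^{k} d_j^{\kappa_j - 1 - O(\eps)} = N^k \prod_{j=1}^{k} d_j^{\kappa_j - O(\eps)},
\]
which, after rescaling $\eps$, is at least as strong as the stated bound.

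The conceptual heart of the argument is the substitution $\bm = n \bk$: it decouples the joint $(n,\bk)$ sum into a separable product of one-dimensional sums (controlled by $\dim_{\ell^1}(\mu_j)$) together with a single arithmetic object $T(\bm)$ amenable to a classical divisor estimate. The main technical obstacle is the one-dimensional bookkeeping: one must choose $L$ uniformly large enough to handle all shapes of $\bd$ (in particular, the case $d_j > 1/N$, where the entire sum is absorbed by $\hat w$-decay), and must carefully treat coordinates with $m_j = 0$ (where the gcd constraint is carried by the other coordinates and the factor $\hat w(0) \hat \mu_j(0)$ contributes a harmless constant).
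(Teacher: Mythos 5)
Your proposal is correct and follows essentially the same route as the paper: expand $A_n^*$ in Fourier series (so that $\widehat{A_n^*}$ is supported on multiples of $n$ and essentially on $\cR^\vee$), control the number of $n\in[N,2N)$ dividing a given frequency by the divisor bound, and then use the product structure together with the $\ell^1$-dimension bound $\sum_{|m|\le Q}|\hat\mu_j(m)|\ll_\eps Q^{1-\kappa_j+\eps}$ and the rapid decay of $\hat w$ on dyadic annuli. Your explicit substitution $\bm=n\bk$ and the quantity $T(\bm)$ are just a cleaner packaging of the paper's step ``there are $O(N^\eps)$ many integers $n$ with $n\mid\bxi$,'' and your handling of the divisor factor (absorbing $|m_{j_0}|^\eps$ into $d_{j_0}^{-\eps}$) and of the zero coordinates is sound.
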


\begin{proof}
By Parseval,
\begin{align*}
\tilde E_N(\mu) &= \sum_{N \le n < 2N} \int (A_n^*(\bx)-\lam(A_n^*))\d\mu(\bx) = \sum_{N \le n < 2N} \sum_{\bxi \neq \mathbf{0}} \widehat{A_n^*}(\bxi) \hat{\mu} (-\bxi).
\end{align*}
To proceed, the key intuition is that $\widehat{A_n^*}$ is essentially supported on the multiples of $n$ in the box
\[
\cR^\vee_N := [-1/d_1, 1/d_1] \times \cdots \times [-1/d_k, 1/d_k].
\]
Indeed, as $A_n^*$ is $n^{-1}$-periodic, its Fourier coefficients vanish away from multiples of $n$.
Moreover, we can see from \eqref{decay2} that if $u \in \bN$ and $\bxi \in 2^u \cR^\vee_N \setminus 2^{u-1} \cR^\vee_N$ then
\[
\widehat{A_n^*}(\bxi)
\ll_L 2^{-uL} N^k d_1 \cdots d_k,
\]
for any constant $L \ge 1$.
Thus,
\begin{align*}
\frac{\tilde E_N(\mu)}{N^k d_1\cdots d_k} \ll \sum_{\substack{n \mid \bxi \in \cR^\vee_N \setminus \{ \bzero \} \\ N \le n < 2N}} |\hat{\mu}(\bxi)|
+ \sum_{u=1}^\infty 2^{-uL} \sum_{\substack{n \mid \bxi \in 2^u \cR^\vee_N \setminus 2^{u-1} \cR^\vee_N 
 \\ N \le n < 2N}} 
|\hat{\mu}(\bxi)|.
\end{align*}

Next, we exploit the product structure of $\mu$. Observe that there are $O(N^\epsilon)$ many integers $n$ such that $n \mid \bxi$, whence
\begin{align*}
\sum_{\substack{n \mid \bxi \in \cR^\vee_N \setminus \{ \bzero \} \\ N \le n < 2N}} |\hat{\mu}(\bxi)| 
&\ll N^\epsilon\prod_{j \le k} \left(\frac{1}{d_j}\right)^{1-\kap_j+\epsilon}.
\end{align*}
For $u \in \bN$,
\begin{align*}
\sum_{\substack{n\mid \bxi \in 2^u \cR^\vee_N\setminus 2^{u-1} \cR^\vee_N \\ N \le n < 2N}} |\hat{\mu}(\bxi)| &\ll (2^u N)^\epsilon \prod_{j \le k} \left(\frac{2^u}{d_j}\right)^{1-\kap_j + \epsilon} \\
&\le 2^{Ku} N^{\epsilon} 
\prod_{j \le k}
\left(\frac{1}{d_j}
\right)^{1 - \kap_j + \eps},
\end{align*}
where $K = (k+1)(1+\eps)$. Choosing $L > K$ then gives
\begin{align*}
\label{eqn: last step}
\frac{\tilde E_N(\mu)}{N^k d_1\cdots d_k} \ll N^{\epsilon} \prod_{j=1}^{k} 
\left(\frac{1}{d_j}
\right)^{1 - \kap_j + \eps}.
\end{align*}
\end{proof}

We also need an ETP result for general measures, rather than product measures. For this, we require a strong assumption on $\bd.$

\begin{thm}
[Dyadic general ETP]
\label{thm: ETP for general measure}
Let $N \in \bN$ and $0 < d \le 1/2$. For each integer $n \in [N,2N)$, we associate $A_n^*$ to $\bd = (d_1, \ldots, d_k)$ as in \S \ref{sec: rectangle}, where $\tau > 0$ is fixed and
\[
N^{- \tau} d \ll d_j \ll N^\tau d.
\]
Let $\mu$ be a Borel probability measure on $[0,1]^k$ with $\dim_{\ell^1}(\mu) = \kappa$. Then, for any fixed $\epsilon>0,$
\[
\tilde E_N(\mu)
\ll 
N^{k+(2k+\eps)\tau+\eps} d^{\kappa - \eps}.
\]
\end{thm}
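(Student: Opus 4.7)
The plan is to adapt the proof of Theorem 3.1 (Dyadic split ETP), replacing the coordinate-wise application of the $\ell^1$ bound (which required the product structure) with a single application on a cube that contains $\cR_N^\vee$. This loses a factor of $N^\tau$ per coordinate, which is precisely what the exponent $(2k+\eps)\tau$ in the statement absorbs.

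First I would apply Parseval exactly as in the proof of Theorem 3.1:
\[
\tilde E_N(\mu) = \sum_{N \le n < 2N} \sum_{\bxi \neq \bzero} \widehat{A_n^*}(\bxi)\, \hat\mu(-\bxi).
\]
The $n^{-1}$-periodicity of $A_n^*$ restricts the inner sum to $\bxi$ with $n \mid \bxi$, and the rapid decay estimate \eqref{decay2} gives
\[
|\widehat{A_n^*}(\bxi)| \ll_L N^k d_1 \cdots d_k
\qquad (\bxi \in \cR_N^\vee),
\]
together with the factor $2^{-uL}$ on the dyadic shell $2^u \cR_N^\vee \setminus 2^{u-1} \cR_N^\vee$ for $u \ge 1$. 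The hypothesis $d_j \asymp d$ up to $N^\tau$ factors gives $d_1 \cdots d_k \ll N^{k\tau} d^k$ and
\[
2^u \cR_N^\vee \subseteq \{ \bxi \in \bZ^k : \|\bxi\|_\infty \ll 2^u N^\tau / d \}.
\]

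Next I would swap the order of summation on each dyadic shell. For fixed $\bxi \neq \bzero$ with $\|\bxi\|_\infty \ll 2^u N^\tau/d$, the number of $n \in [N,2N)$ with $n \mid \bxi$ is at most the divisor count of $\gcd(\bxi_1,\dots,\bxi_k)$, which is $O_\eps((2^u N^\tau/d)^\eps)$. Applying the definition of $\dim_{\ell^1}(\mu) = \kap$ on the enclosing cube then yields
\[
\sum_{\substack{N \le n < 2N \\ n \mid \bxi \in 2^u \cR_N^\vee}} |\hat\mu(\bxi)|
\ll (2^u N^\tau/d)^\eps \sum_{\|\bxi\|_\infty \ll 2^u N^\tau/d} |\hat\mu(\bxi)|
\ll (2^u N^\tau/d)^{k - \kap + 2\eps}.
\]

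Combining these estimates and choosing $L > k - \kap + 2\eps$ (so that the geometric sum over $u$ converges) gives
\[
\tilde E_N(\mu) \ll N^k d_1 \cdots d_k \cdot (N^\tau/d)^{k-\kap+2\eps}
\ll N^{k + k\tau + (k-\kap+2\eps)\tau} d^{\kap - 2\eps}
\ll N^{k + (2k+\eps)\tau + \eps} d^{\kap - \eps},
\]
after relabeling $\eps$. I do not anticipate a genuine obstacle: the argument is essentially bookkeeping once one observes that the hypothesis $d_j \asymp d$ makes $\cR_N^\vee$ nearly a cube. The main care required is to track where the $N^\tau$ factors enter (through the divisor bound, through the enclosing cube size, and through the crude bound on $d_1 \cdots d_k$) and to check that the total loss is indeed bounded by $(2k+\eps)\tau$ in the exponent of $N$, which follows from $\kap \ge 0$.
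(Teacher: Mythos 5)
Your argument is correct and follows the same route as the paper: Parseval plus the periodicity/decay of $\widehat{A_n^*}$ reduces everything to sums of $|\hat\mu(\bxi)|$ over multiples of $n$ in dyadic dilates of $\cR_N^\vee$, and since $\mu$ is not assumed to be a product one encloses $\cR_N^\vee$ in a cube of side $\asymp N^\tau/d$ before applying the divisor bound and the definition of $\dim_{\ell^1}$, exactly as the paper's ``imitate Theorem~\ref{thm: ETP}'' proof does. Your bookkeeping of the $N^\tau$ losses (from $d_1\cdots d_k$, the enclosing cube, and the divisor bound) reproduces the paper's intermediate estimate and the stated exponent.
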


\begin{proof}
Imitating the proof of Theorem \ref{thm: ETP} gives
\begin{align*}
&\frac{\tilde E_N(\mu)}{N^k d_1 \cdots d_k} \ll \sum_{\substack{n \mid \bxi \in \cR^\vee_N \setminus \{ \bzero \} \\ N \le n < 2N}} |\hat{\mu}(\bxi)|
+ \sum_{u=1}^\infty 2^{-uL} \sum_{\substack{n \mid \bxi \in 2^u \cR^\vee_N \setminus 2^{u-1} \cR^\vee_N 
 \\ N \le n < 2N}} 
|\hat{\mu}(\bxi)|,
\end{align*}
where
$
R_N^\vee = [-1/d_1,1/d_1] \times \cdots \times [-1/d_k, 1/d_k].
$
A similar argument to the remainder of that proof then delivers
\[
\frac{\tilde E_N(\mu)}{N^{(1+\tau)k} d^k} \ll N^\eps (N^\tau/d)^{k-\kap+\eps}.
\]
\end{proof}

\section{Variance transference principle}
\label{sec: VTP}
In this section, we establish the VTP.

\begin{thm}
\label{thm: VTP}
Let $k \in \bN$, let $\mu$ be a probability measure on $[0,1]^k$ satisfying \eqref{MainAssumption}, and let $(f_n)_{n=1}^\infty$ be an admissible function system. Then VTP holds for $(f_n)_n$ and $\mu$.
\end{thm}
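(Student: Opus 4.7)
The plan is to expand $V_N(\mu) - V_N(\lam)$ in Fourier space and reduce the estimate to two ETP-type quantities that can be handled by the results of Section~\ref{sec: ETP}. Writing $g_n = f_n - \lam(f_n)$ and using that $\hat{f}_n$ is supported on $n\bZ^k$ by the $1/n$-periodicity of $f_n$, Parseval gives
\[
\mu(g_n^2) - \lam(g_n^2) = \sum_{\bxi \neq \bzero} \hat{\mu}(\bxi)\,\widehat{g_n^2}(-\bxi),
\]
and the identity $\widehat{g_n^2}(\bxi) = \widehat{f_n^2}(\bxi) - 2\lam(f_n)\hat{f}_n(\bxi)$ for $\bxi \neq \bzero$ yields the decomposition
\[
V_N(\mu) - V_N(\lam) = \sum_{n \le N}\bigl[\mu(f_n^2) - \lam(f_n^2)\bigr] - 2\sum_{n \le N}\lam(f_n)\bigl[\mu(f_n) - \lam(f_n)\bigr].
\]

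Both terms are of ETP type. For the first, I would observe that $F_n := f_n^2$ inherits the admissible structure of $f_n$: since the components $A_n^\circ(\bd^{(i)})$ are disjoint by construction, and within each $A_n^*(\bd^{(i)})$ the translates $w_{\cR(\bd^{(i)})}(\cdot - (\ba+\by)/n)$ are separated by $1/n \gg d_{i,j}$, we have, up to negligible overlap, $f_n^2 = \sum_{i \in I_m} \sum_{\ba \in \bZ^k} w_{\cR(\bd^{(i)})}^2(\bx - (\ba+\by)/n)$, another admissible function with $w$ replaced by the still-admissible bump $w^2$. Theorem~\ref{thm: ETP for general measure} then applies verbatim to $(F_n)_n$. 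For the second sum, since $\lam(f_n)$ is essentially constant on each block $D_m$, I would pull this weight out of the inner sum and invoke the same ETP estimate for $(f_n)_n$ itself.

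The final step is the dyadic summation over $m \le \log_2 N$ and the comparison with $E_N(\mu)^2 \asymp E_N(\lam)^2$. In the divergent Khinchin case with $\fd_m \asymp \psi(2^m)/2^m$, the dominant contribution is schematically $\sum_m 2^{m(k-\kappa + O(\eps+\tau))}\,\psi(2^m)^{\kappa-\eps}$, where $\kappa := \dim_{\ell^1}(\mu)$, to be compared with $\bigl(\sum_m 2^m \psi(2^m)^k\bigr)^2$. Reducing to the range $\psi(2^m) \ll 2^{-m/k}$ via Remark~\ref{AssumeUpper}, the exponent arithmetic closes under \eqref{MainAssumption} with positive slack to absorb the $\eps$'s and the $N^{(2k+\eps)\tau}$ factor in Theorem~\ref{thm: ETP for general measure}. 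The divergent Gallagher case is analogous, with $|I_m| \asymp m^{k-1}$ contributing a polylogarithmic factor that is ultimately dominated by $E_N(\lam)^2 \asymp \bigl(\sum_m 2^m m^{k-1}\psi(2^m)\bigr)^2$.

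The main obstacle will be the Gallagher regime, where the boxes $\cR(\bd^{(i)})$ within $I_m$ have anisotropic aspect ratios; Theorem~\ref{thm: ETP for general measure} requires the $d_{i,j}$'s to lie within $N^\tau$ of a common scale $\fd_m$, so I would take $\tau$ just large enough to accommodate the polylogarithmic spread while keeping $\tau \to 0$ as $N \to \infty$. Beneath the ETP reduction lies the Diophantine count flagged in the introduction: after substituting $\bxi = n\bm$, $\bxi' = n\bm'$, one must count integer pairs $(\bm,\bm')$ in the dual rectangle $\cR^\vee(\bd^{(i)})$ with prescribed sum $\bm+\bm' = -\bxi/n$. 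The trivial volume bound for this count, combined with the $O(N^\eps)$ divisor bound for the constraint $n \mid \bxi$, reduces matters to an $\ell^1$-Fourier estimate $\sum_{\bxi \in \cR^\vee \cap n\bZ^k}|\hat{\mu}(\bxi)|$, which \eqref{MainAssumption} controls with just enough room to close the argument.
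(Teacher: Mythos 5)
Your decomposition addresses the wrong quantity. The VTP concerns the variance of the sum, $V_N(\mu)=\int\bigl(\sum_{n\le N}(f_n-\lam(f_n))\bigr)^2\,d\mu$, not the sum of the individual variances $\sum_{n\le N}\int(f_n-\lam(f_n))^2\,d\mu$; this is forced by how the VTP is consumed in Lemma~\ref{GenDiv}, which needs control of the full correlation sum $\sum_{m,n\le N}\mu(f_mf_n)$ to run the divergence Borel--Cantelli argument. Your identity
\[
V_N(\mu) - V_N(\lam) = \sum_{n \le N}\bigl[\mu(f_n^2) - \lam(f_n^2)\bigr] - 2\sum_{n \le N}\lam(f_n)\bigl[\mu(f_n) - \lam(f_n)\bigr]
\]
captures only the diagonal $m=n$, and those terms are essentially trivial: since the sets $A_n^\circ(\bd^{(i)})$ are disjoint and $0\le f_n\ll 1$, one has $f_n^2\ll f_n$, so the diagonal contributes only $O(E_N(\lam))=o(E_N(\lam)^2)$. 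All of the substance of the theorem lies in the off-diagonal terms $\sum_{m\ne n}\bigl[\mu(f_mf_n)-\lam(f_mf_n)\bigr]$, which your argument never touches; note also that Theorem~\ref{thm: ETP for general measure} cannot be invoked for these, because $f_mf_n$ with $m\ne n$ is not of the admissible periodic form.

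Concretely, expanding the square of the sum in Fourier space produces $\sum_{n,n'}\sum_{\bxi,\bxi'}\hat f_n(\bxi)\hat f_{n'}(\bxi')\hat\mu(\bxi+\bxi')$ with $\bxi\in n\bZ^k$, $\bxi'\in n'\bZ^k$ and $n\ne n'$ allowed. For a fixed target frequency $\bx=\bxi+\bxi'$ one must count solutions of $n\bt+n'\bt'=\bx$ with $\bt,\bt'$ in the relevant dual rectangles and $n,n'$ ranging over dyadic blocks. The trivial volume bound is far too weak here: the paper's proof hinges on a rank dichotomy (whether $\bt$ and $\bt'$ are parallel) combined with divisor-bound arguments such as $n'\mid t_2x_1-t_1x_2\ne 0$, and only after that saving does \eqref{MainAssumption} close the exponent arithmetic. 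Your closing paragraph gestures at a representation count, but with a single modulus $n$ (i.e.\ still the self-convolution $\widehat{f_n^2}$) and with ``the trivial volume bound,'' so the two-modulus counting problem --- the actual content of the theorem --- is missing.
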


\begin{proof}
We wish to estimate
\[
V_N(\mu) = \int_{[0,1]^k} \left(
\sum_{n \le N} (f_n(\bx) - \lam(f_n))
\right)^2 \d \mu(\bx).
\]
As $f_n$ is a Schwartz function, we may replace it by its Fourier series to obtain
\begin{align*}
\label{eq: VTP fourier identity}
V_N(\mu)
&= \int_{[0,1]^k} \left(
\sum_{n \le N} \sum_{\bxi \neq \bzero} \hat{f}_n(\bxi) e(\bxi \cdot \bx)
\right)^2 \d\mu(\bx) \\
&= \int_{[0,1]^k} \sum_{n,n' \le N} \sum_{\bxi,\bxi' \neq \mathbf{0}} \hat{f}_{n}(\bxi) \hat{f}_{n'}(\bxi') e((\bxi+\bxi') \cdot \bx) \d\mu(\bx)\\
&= \sum_{n,n'}
\sum_{\bxi,\bxi' \neq \mathbf{0}, \bxi+\bxi' = \mathbf{0}} \hat{f}_{n}(\bxi) \hat{f}_{n'}(\bxi')\\ & \qquad + \sum_{n,n'} \sum_{\bxi,\bxi',\bxi+\bxi'\neq\mathbf{0}}\hat{f}_{n}(\bxi)\hat{f}_{n'}(\xi') \hat{\mu}(\bxi+\bxi')
\\
&= V_N(\lam) + \sum_{n,n'} \sum_{\bxi,\bxi',\bxi+\bxi' \neq \mathbf{0}}\hat{f}_{n}(\bxi) \hat{f}_{n'}(\bxi') \hat{\mu}(\bxi+\bxi').
\end{align*}
The final equality is gotten by running the same calculation for $V_N(\lam)$. It is crucial that all of the Fourier series converge absolutely, which is the case as our functions are Schwartz. We are left with the contribution of the non-zero coefficients:
\[
V_N(\mu) = V_N(\lam) + E,
\]
where
\[
E = \sum_{n,n' \le N} \sum_{\bxi,\bxi',\bxi+\bxi' \neq \mathbf{0}} \hat{f}_{n}(\bxi) \hat{f}_{n'}(\bxi') \hat{\mu}(\bxi+\bxi').
\]

We sum over $n,n'$ dyadically, noting that
\[
|E| \le
\sum_{m,m' \le \frac{\log N}{\log 2}}
\sum_{\substack{n\in D_m \\ n'\in D_{m'}}} \sum_{\bxi, \bxi', \bxi+\bxi'\neq
\mathbf{0}} |\hat{f}_{n}(\bxi)\hat{f}_{n'}(\bxi')\hat{\mu}(\bxi+\bxi')|,
\]
where $D_m=[2^m,2^{m+1}).$ We normalise the Fourier coefficients by dividing by
\[
\lam(f_n) 
= \sum_{i\in I_m} n^k \prod_{j \le k} d_{i,j}
\asymp 2^{km} \sum_{i\in I_m} \prod_{j \le k} d_{i,j} =: w_m
\]
and 
$
\lam(f_{n'}) \asymp w_{m'}.
$
Writing 
\[
a_n(\bxi) = \frac{\hat{f}_n(\bxi)} {\lam(f_n)}, \qquad
a_{n'}(\bxi) = \frac{\hat{f}_{n'}(\bxi)} {\lam(f_{n'})},
\]
we now have
\begin{align*}
E &\ll
\sum_{m,m' \le \frac{\log N}{\log 2}} w_m w_{m'} \sum_{\substack{n\in D_m \\ n'\in D_{m'}}}
\sum_{\bxi, \bxi', \bxi + \bxi'\neq \bzero} |a_n(\bxi) a_{n'}(\bxi') \hat{\mu}(\bxi+\bxi')| \\
&= \sum_{m,m' \le \frac{\log N}{\log 2}} w_m w_{m'} S(m,m'),
\end{align*}
where
\[
S(m,m') =
\sum_{\substack{n \in D_m \\ n'\in D_{m'}}}
\sum_{\bxi+\bxi'\neq \bzero} |a_n(\bxi) a_{n'}(\bxi') \hat{\mu}(\bxi+\bxi')|.
\]

To proceed, we examine the coefficients $a_n,a_{n'},$ which satisfy 
\[
|a_n|, |a_{n'}| \le 1.
\]
The support of $f_n$ is essentially a union of rectangles with different side lengths but the same volume, and its Fourier series is the sum of the Fourier series of those rectangles. 
In what follows, we may assume that $I_m$ is non-empty, since otherwise it will not contribute anything to the sums. 

We write
\[
a_n = \sum_{i\in I_m} a_{n,i},
\]
where
$
\lam(f_n) a_{n,i} $
is the Fourier transform of $\bx \mapsto A_n^*(\bd^{(i)};\bx)$. Note that 
\[
\lam(f_n) \asymp w_m = (\# I_m) 2^{km} \prod_{j \le k} d_{i,j}
\]
for all $i \in I_m$, whence
\[
a_{n,i} \ll (\#I_m)^{-1}.
\]
We now see that
\[
S(m,m') \ll
\sum_{\substack{n\in D_m \\ n'\in D_{m'}}} \sum_{\substack{i \in I_m\\ i' \in I_{m'}}} \sum_{\bxi, \bxi', \bxi+\bxi'\neq \bzero} |a_{n,i}(\bxi)a_{n',i'}(\bxi') \hat{\mu}(\bxi+\bxi')|.
\]

The function $a_{n,i}$ is essentially supported on the multiples of $n$ in
\[
\cR_{m,i}^\vee := [-1/d_{i,1},1/d_{i,1}] \times \cdots \times [-1/d_{i,k},1/d_{i,k}].
\]
Indeed, we have the following estimate \cite[Lemma 5.3]{CY}.

\begin{lem} Let
\[
n \in D_m, \quad i \in I_m, \quad L,t \ge 1, \quad \bxi \in \bZ^k \setminus t \cR_{m,i}^\vee.
\]
Then
\[
a_{n,i}(\bx) \ll_L t^{-L} (\# I_m)^{-1}.
\]
\end{lem}

\noindent Similarly, the function $a_{n',i'}$ is essentially supported on the multiples of $n'$ in $\cR^\vee_{m',i'}$. 
The upshot is that
\[
S(m,m') \ll_L \sum_{u,u' = 0}^\infty S_{u,u'}(m,m'),
\]
where
\[
S_{u,u'}(m,m') =  \frac{2^{-L(u+u')}}{\#I_m \# I_{m'}} \sum_{\substack{i \in I_m \\ i' \in I_{m'}}} \sum_{\substack{\bxi, \bxi', \bxi + \bxi' \ne \bzero \\ n \mid \bxi \in 2^u \cR_{m,i}^\vee \\ n' \mid \bxi' \in 2^{u'} \cR_{m',i'}^\vee}} |\hat \mu(\bxi + \bxi')|.
\]
Here $L \ge 1$ is a constant that we will describe later.

Write
\[
T = \begin{pmatrix}
t_1 & t_1' \\
t_2 & t_2' \\
\vdots & \vdots \\
t_k & t_k'
\end{pmatrix}, \qquad
\bn = \begin{pmatrix} n \\ n' \end{pmatrix}, \qquad
\bx = \begin{pmatrix} x_1 \\ x_2 \\ \vdots \\ x_k \end{pmatrix}
\]
For $\bzero \ne \bx \in \bZ^k$, let $N(\bx) = N(\bx; m, m', u, u', i, i')$ count solutions $(\bt, \bt', \bn) \in \bZ^{2k+2}$ to
$T \bn = \bx$ such that
\begin{align*}
&n \in D_m, \qquad n' \in D_{m'}, \qquad \bt, \bt' \ne \bzero, \\
&|t_j n| \le 2^u/d_{i,j,m}, \quad
|t'_j n'| \le 2^{u'}/d_{i',j,m'}
\qquad (1 \le j \le k).
\end{align*}
Then
\begin{align*}
S_{u,u'}(m,m') = \frac{2^{-L(u+u')}}{\# I_m \# I_{m'}} \sum_{\substack{i\in I_m
\\ i'\in I_{m'}}} \sum_{\bzero \ne \bx \in \mathbb{Z}^k} |\hat{\mu}(\bx)| N(\bx).
\end{align*}
This is a finite sum, since $N(\bx) = 0$ for large $\bx$. Indeed, if $N(\bx) \ge 1$ then
\begin{equation}
\label{xbound}
|x_j| \le \frac{2^u}{d_{i,j,m}} + \frac{2^{u'}}{d_{i',j,m'}} \ll \frac{2^{u + \tau m}}{\fd_m} + \frac{2^{u' + \tau m'}}{\fd_{m'}}.
\end{equation}

For $j = 1,2,\ldots, k$, we can bound the number of possibilities for $t_j, t_j'$ by
\[
r_j := \frac{2^{u+2-m}}{d_{i,j,m}} + 1, \qquad r_j' := \frac{2^{u'+2-m'}}{d_{i',j,m'}} + 1, 
\]
respectively. Note that
\[
r_j \ll \frac{2^{u + (\tau - 1) m}}{\fd_m},
\qquad
r_j' \ll \frac{2^{u' + (\tau-1)m'}}{\fd_{m'}}.
\]
To estimate $N(\bx)$, we decompose $N(\bx) = N_1(\bx) + N_2(\bx)$, where $N_1(\bx)$ counts solutions with $\bt, \bt'$ parallel. Without loss of generality, we assume that $m \le m'$.

\subsection{The partial-rank case}

Since $\bx \ne \bzero$ we may, by symmetry, assume that $x_1 \ne 0$. As $n \bt + n' \bt' = \bx$ for any solution counted by $N_1(\bx)$, the vectors $\bt$ and $\bt'$ must be multiples of $\bx$, with $t_1 t_1' \ne 0$. The count $N_1(\bx)$ is therefore bounded by the number of solutions to
\[
t_1 n + t_1' n' = x_1.
\]
Choosing the value of $t_1 n \notin \{ 0,x_1 \}$, then applying the divisor bound to determine $t_1, n, t_1', n'$, gives
\[
N_1(\bx) \ll 2^{\tau m} \left( \frac{2^{u + \tau m}}{\fd_m} + \frac{2^{u' + \tau m'}}{\fd_{m'}} \right).
\]

\subsection{The full-rank case}

Since $\bt,\bt'$ are not parallel we may, be symmetry, assume that the first two rows of $T$ are linearly independent. As $(t_1, t_2) \ne (0,0)$, we may also assume that $t_1 \ne 0$. We begin by choosing $t_1 \ne 0$, as well as $t'_1$ and $t_2$. Using that
\[
t_1 n + t'_1 n' = x_1, \qquad t_2 n + t'_2 n' = x_2,
\]
we see that
\[
(t_2 t'_1 - t_1 t'_2) n' = t_2 x_1 - t_1 x_2.
\]
The left hand side is non-zero, so
\[
n' \mid t_2 x_1 - t_1 x_2 \neq 0.
\]
Thus, by the divisor bound, the number of possibilities for $n'$ is at most $O(2^{(m+m')\tau}).$ After choosing $n'$, we can determine at most one possibility for $n$ via
\[
t_1 n + t'_1 n' = x_1,
\]
and at most one possibility for $t'_2$ via
\[
t_2 n + t'_2 n' = x_2.
\]
Choosing $t_3,\dots,t_k$ will then determine $t'_3,\dots,t'_k$ in at most one way. The upshot is that
\[
N_2(\bx) \ll r_1 r_1' r_2 r_3 \cdots r_k 2^{(m+m')\tau}
\ll  \left(\frac{2^{u + (\tau - 1) m}}{\fd_m} \right)^k \frac{2^{u' + (\tau-1)m'}}{\fd_{m'}}
2^{(m+m')\tau}.
\]

\subsection{VTP conclusion}

Combining our bounds on $N_1(\bx)$ and $N_2(\bx)$ with \eqref{xbound} furnishes
\begin{align*}
&\sum_{\bx \ne \bzero} |\hat \mu(\bx)| N(\bx) \ll 2^{O(\tau)m'}
\left( \frac{2^{u}}{\fd_m} + \frac{2^{u'}}{\fd_{m'}} \right)^{k - \dim_{\ell^1}(\mu)} \left(\frac{2^{u - m}}{\fd_m} \right)^k \frac{2^{u'-m'}}{\fd_{m'}}.
\end{align*}
Now, taking $L \ge 1$ sufficiently large,
\[
S(m,m') \ll 2^{O(\tau)m'}
\left( \frac{1}{\fd_m} + \frac{1}{\fd_{m'}} \right)^{k - \dim_{\ell^1}(\mu)} \left(\frac{2^{- m}}{\fd_m} \right)^k \frac{2^{-m'}}{\fd_{m'}}.
\]
Therefore
\begin{align*}
E &\ll \sum_{m, m' \le \frac{\log N}{\log 2}} w_m w_{m'} \\
&\qquad \cdot \left(2^{O(\tau)m'}
\left( \frac{1}{\fd_m} + \frac{1}{\fd_{m'}} \right)^{k - \dim_{\ell^1}(\mu)} \left(\frac{2^{- m}}{\fd_m} \right)^k \frac{2^{-m'}}{\fd_{m'}} \right) \\
&\ll \sum_{m, m' \le \frac{\log N}{\log 2}} w_m w_{m'}  2^{O(\tau)m'}
(2^{m'(k+1)/k})^{k - \dim_{\ell^1}(\mu)} 2^m 2^{m'/k}.
\end{align*}

Since
\[
E_N(\lam) \asymp \sum_{m \le \frac{\log N}{\log 2}} 2^m w_m
\]
whenever $N$ is a power of two, and since $\tau$ is small, it remains to show that
\[
(1 + 1/k) (k - \dim_{\ell^1}(\mu)) + 1/k < 1.
\]
This is nothing more than a rearrangement of \eqref{MainAssumption}.
\end{proof}

\section{Completing the proofs}\label{sec: final proofs}

In this section, we establish Theorems \ref{thm: convergence}, \ref{thm: divergence},  \ref{HausdorffConvThm}, \ref{thm: Rational Counting}, \ref{OtherBound}, and \ref{thm: Intrinsic App}.

\subsection{Proof of Theorem \ref{thm: convergence}}
\label{ConvergentProofs}

For the convergent Khinchin statement, we may assume that
\[
\psi(n) \ge (n \log^2 n)^{-1/k}
\qquad (n \ge 2).
\]
Indeed, we may replace $\psi$ by $\tilde \psi$, where $\tilde \psi(n) = \psi(n) + (n \log^2 n)^{-1/k}$ for $n \ge 2$, since $\psi \le \tilde \psi$ and $\sum_n \tilde \psi(n)^k < \infty$.

We use $f_n(\bx) = A_n^*(\bx; \bd)$ from \eqref{AnStar} where, for $n \in D_m$,
\[
\bd = (d, \ldots, d), \qquad
d = \psi(2^{m-1})/2^{m-1}.
\]
Then
$
\lam(f_n) \ll \psi(2^{m-1})^k
$
so, by the Cauchy condensation test and the convergence of $\sum_{n=1}^\infty \psi(n)^k$, we have \eqref{LebesgueConvergence}. Moreover, we have $f_n \gg 1$ on
\[
\{ \bx \in [0,1)^k:
\max \{
\| n x_1 - y_1 \|, \ldots,
\| n x_k - y_k \|
\} < \psi(n) \}.
\]
By Lemma \ref{GenConv}, it remains to confirm the ETP. 

For any $\eps > 0$, Theorem \ref{thm: ETP for general measure} gives
\[
\sum_{n \in D_m} (\mu(f_n) -\lam(f_n)) \ll_\eps (2^{m-1})^{k+\eps} (\psi(2^{m-1})/2^{m-1})^{\kap - \eps}.
\]
For $N = 2^M - 1$, we thus have
\begin{align*}
E_N(\mu) - E_N(\lam) &\ll 
\sum_{m \le M-1} (2^{m-1})^{k - \kap + 2 \eps} \psi(2^{m-1})^{\kap - \eps},
\end{align*}
where $\kap = \dim_{\ell^1}(\mu)$.
On the other hand,
\[
E_N(\lam) \gg \sum_{n \le N} \psi(n)^k \ge \sum_{m \le M-1} 
2^{m-1} \psi(2^{m-1})^k.
\]

We see from \eqref{WeakAssumption} that if $\eps$ is sufficiently small then
\[
n^{k - \kap  - 1 + 2 \eps}
= o((n \log^2 n)^{(\kap - \eps -k)/k})
\]
and hence 
\[
n^{k - \kap  - 1 + 2 \eps} = o(\psi(n)^{\kap - \eps - k}).
\]
Applying this to $n = 2^{m-1}$, we now have
\begin{align*}
E_N(\mu) - E_N(\lam) &\ll \sum_{m \le M - 1} o_{m \to \infty}(2^{m-1} \psi(2^{m-1})^k) \\
&= O(1) + o (E_N(\lam)),
\end{align*}
as $N = 2^M - 1 \to \infty$. This verifies the ETP and completes the proof of the Khinchin assertion.

\bigskip

For the convergent Gallagher statement, we may assume that
\[
\psi(n) \ge (n \log^{k+1} n)^{-1} \qquad (n \ge 2).
\]
Indeed, we may replace $\psi$ by $\tilde \psi$, where $\tilde \psi(n) = \psi(n) + (n \log^{k+1} n)^{-1}$ for $n \ge 2$, since $\sum_n \tilde \psi(n) \log^{k-1} n < \infty$.

We deploy the rectangular decomposition $\fC_n$ from \S \ref{sec: rectangle}. Our smooth approximation then satisfies $f_n \gg 1$ on $A_n^\times$ and, since
\[
\lam(f_n) \ll \psi(2^{m-1}) \log^{k-1}(1/\psi(2^{m-1})) 
\ll \psi(2^{m-1}) m^{k-1} \qquad (n \in D_m),
\]
we have \eqref{LebesgueConvergence} by the Cauchy condensation test and the convergence of $\sum_{n=1}^\infty \psi(n) (\log n)^{k-1}$. By Lemma \ref{GenConv}, it remains to confirm the ETP. Note that $f_n$ is a linear combination of $\asymp \log^{k-1}(1/\psi(n))$ many functions $A_n^* = A_n^*(\bd; \cdot)$, where
\[
\frac{\psi(2^{m-1})}{2^{m-1}} \le d_j \le \frac1{2^{m-1}} \quad(1 \le j \le k), \qquad
d_1 \cdots d_k \asymp \frac{\psi(2^{m-1})}{2^{k(m-1)}}.
\]
It therefore suffices to prove that
\[
\sum_{n \in D_m} (\mu(A_n^*) - \lam(A_n^*)) = 
o(2^{m-1} m^{k-1} \psi(2^{m-1})).
\]

Recall that $\mu = \mu_1 \times \cdots \times \mu_k$, where each $\mu_j$ is a missing-digit measure. By \eqref{SplitAssumption}, there exists $\kap$ in the range
\[
\min \{ \dim_{\ell^1}(\mu_j): 1 \le j \le k \} > \kap > 1 - \frac{1}{k+1}.
\]
For any fixed $\eps > 0$, Theorem \ref{thm: ETP} gives
\begin{align*}
\sum_{n \in D_m} (\mu(A_n^*) -\lam(A_n^*)) &\ll (2^{m-1})^{k+\eps} \sum_{i \in I_m} (d_1 \cdots d_k)^{\kap} \\
&\ll (2^{m-1})^{k+\eps} \left(
\frac{\psi(2^{m-1})}{2^{k(m-1)}} \right)^{\kap}.
\end{align*}

Writing $n=2^{m-1}$, it remains to prove that if $\eps$ is sufficiently small then
\[
n^{k + \eps} (\psi(n)/n^k)^{\kap} = o(n \psi(n)).
\]
This is equivalent to
\[
n^{k(1-\kap) - 1 + \eps} = o(\psi(n)^{1-\kap}),
\]
which follows from the inequalities
\[
\psi(n) \ge \frac1{n^{1+\eps}},
\qquad
1 - \kap < \frac1{k+1}
\]
and the fact that $\eps$ is small. This completes the proof of Theorem \ref{thm: convergence}.

\subsection{Proof of Theorem \ref{thm: divergence}}

For the divergent Khinchin statement, we use an admissible function system $(f_n)_{n=1}^\infty$ built from the admissible set system in Example \ref{SimOK}. Then $\lam(f_n) \gg \psi(2^m)^k$ so, by the Cauchy condensation test and the divergence of $\sum_{k=1}^\infty \psi(n)^k$, we have \eqref{LebesgueDivergence}. For any $C > 1$, we may apply Theorem \ref{LebesgueEstimate} to obtain \eqref{LebesgueQIA}. We have the ETP, in the same way as for the convergent Khinchin statement (see \S \ref{ConvergentProofs}). We also have VTP, by Theorem \ref{thm: VTP}. Lemma \ref{GenDiv} now completes the proof of the divergent Khinchin assertion.

\bigskip

For the divergent Gallagher statement, we use an admissible function system $(f_n)_{n=1}^\infty$ built from the admissible set system in Example \ref{MultOK}. Then $\lam(f_n) \gg \psi(2^m) m^{k-1}$ so, by the Cauchy condensation test and the divergence of the series $\sum_{k=1}^\infty \psi(n) (\log n)^{k-1}$, we have \eqref{LebesgueDivergence}. For any $C > 1$, we may apply Theorem \ref{LebesgueEstimate} to obtain \eqref{LebesgueQIA}. We also have the VTP, by Theorem \ref{thm: VTP}. By Lemma \ref{GenDiv}, it remains to confirm the ETP.

By \eqref{MainAssumption}, there exists $\kap$ in the range
\[
\dim_{\ell^1}(\mu) > \kap > k - \frac{k-1}{k+1}.
\]
For any fixed $\eps > 0$, Theorem \ref{thm: ETP for general measure} gives
\begin{align*}
\sum_{n \in D_m} (\mu(A_n^*) -\lam(A_n^*))
&\ll (2^{m-1})^{k+O(\tau+\eps)} 
\left(
\frac{\psi(2^{m-1})^{1/k}}{2^{(m-1)}} \right)^{\kap}.
\end{align*}
Writing $n=2^{m-1}$, it suffices to prove that if $\eps$ is sufficiently small then
\[
n^{k + \eps} (\psi(n)^{1/k}/n)^{\kap} = o(n \psi(n)).
\]
This is equivalent to
\[
n^{k - 1 - \kap + \eps} = o(\psi(n)^{(k-\kap)/k}),
\]
which follows from the inequalities
\[
\psi(n) \ge \frac1{n^{1+\eps}},
\qquad
\kap > \frac{k^2 + 1}{k+1} > \frac{k^2}{k+1}
\]
and the fact that $\eps$ is small. This completes the proof of Theorem \ref{thm: divergence}.

\subsection{Proof of Theorem \ref{HausdorffConvThm}}

The proof is similar to that of the Khinchin part of Theorem \ref{thm: convergence}, but we now exploit the quantitative error term in Theorem \ref{thm: ETP for general measure}. Let $\mu$ be the Cantor--Lebesgue measure of $\cK$. Note that $\mu$ is $\kap_2$-regular, where $\kap_2 = \Haus(\cK)$. 

Our setup is similar to that of the Khinchin part of Theorem \ref{thm: convergence}, with $\psi = \psi_t$. For $n \in D_m$, the smooth function $f_n$ is supported on a union of balls of radius $2 \psi(2^{m-1})/2^{m-1}$, and
$f_n \gg 1$ on at set $\cS_n$ containing
\[
A_n := 
\{ \bx \in [0,1)^k: \max \{
\| n x_1 - y_1 \|, \ldots, \| n x_k - y_k \| \} < \psi(n) \}.
\]
Moreover, if $\bz \in A_n$ then $B_r(\bz) \subseteq \cS_n$, where 
\[
r = r_m = \psi(2^{m-1})/2^m.
\]
As $\mu$ is $\kap_2$-regular, we now see that $\bigcup_{n \in D_m} A_n \cap \cK$ is covered by a collection $\cB_m$ of hypercubes $B$ of diameter $|B| \asymp r$
centred in $\cK$ such that
\[
\sum_{B \in \cB_m} \mu(B) \ll \sum_{n \in D_m} \mu(\cS_n) \ll \sum_{n \in D_m} \mu(f_n).
\]
Thus, by Theorem \ref{thm: ETP for general measure} and the $\kap_2$-regularity of $\mu$, we have
\begin{align*}
\label{eqn: rational counting}
r_m^{\kap_2} 
\# \cB_m \ll_\eps 2^{m-1} \psi(2^{m-1})^k + (2^{m-1})^{k + \eps} r_m^{\kap - \eps}
\end{align*}
for any constant $\eps > 0$,
where $\kap = \dim_{\ell^1}(\mu)$.

Now, for any fixed $s > 0$,
\[
\sum_{B \in \cB_m} |B|^s \ll r_m^s \# \cB_m \ll r_m^{s-\kap_2} (2^{m-1} \psi(2^{m-1})^k + (2^{m-1})^{k + \eps} r_m^{\kap - \eps}).
\]
By Theorem \ref{LevesleyThm} and Lemma \ref{HausdorffCantelli}, it remains to show that
\begin{equation}
\label{FirstTermConverges}
\sum_{m=1}^\infty 
2^{m-1} r_m^{s-\kap_2} \psi(2^{m-1})^k < \infty
\end{equation}
and
\begin{equation}
\label{FirstTermDominates}
(2^{m-1})^{k + \eps} r_m^{\kap - \eps} \ll 2^{m-1} \psi(2^{m-1})^k,
\end{equation}
whenever $\eps$ is sufficiently small and 
\[
\frac1k  < t < \frac1k + \eps,
\qquad
s > \frac{k+1}{t+1} + \kap_2 - k.
\]

By the Cauchy condensation test, we have \eqref{FirstTermConverges} if and only if
\[
\sum_{n=1}^\infty (\psi(n)/n)^{s-\kap_2} \psi(n)^k < \infty.
\]
The latter indeed holds, since
\[
(\psi(n)/n)^{s-\kap_2} \psi(n)^k =
n^{(t+1)(\kap_2 - s)-tk}
\]
and
\begin{align*}
(t+1)(\kap_2 - s)-tk < 
(t+1) \left( k - \frac{k+1}{t+1} \right) - tk = -1.
\end{align*}

With $n = 2^{m-1}$, we can rewrite \eqref{FirstTermDominates} as
\[
n^{k+\eps} (\psi(n)/n)^{\kap - \eps} \ll n \psi(n)^k,
\]
which is equivalent to
\[
\psi(n)^{k - \kap + \eps} \gg n^{k-1+2\eps - \kap}.
\]
By \eqref{WeakAssumption}, the latter holds for any sufficiently small $\eps$, since $\psi(n) = n^{-t}$ and
\[
\frac{\kap - k}{k} > k - 1 - \kap.
\]
This confirms \eqref{FirstTermDominates} and completes the proof of Theorem \ref{HausdorffConvThm}.

\subsection{Proof of Theorem \ref{thm: Rational Counting}}
\label{RationalCountingProof}

Let us write
\[
\kap = \dim_{\ell^1}(\mu), \qquad \kap_2 = \Haus(\cK).
\]
For the upper bound we use the smooth function $f_n(\bx) = A_n^*(\bx; \bd)$ from \eqref{AnStar}, where
\[
\bd = (\del/N, \ldots, \del/N), \qquad
 \by = \bzero,
\]
for each $n \in [N,2N)$ and each $N \in \bN$ such that $N \le Q$ and $Q/N$ is a power of two. By Theorem \ref{thm: ETP for general measure}, if $\eps > 0$ then
\[
\sum_{N \le n < 2N} \mu(f_n) \ll_\eps \del^k N + N^{k+\eps} (\del/N)^{\kap - \eps}.
\]
To each $(\ba, n) \in \cN_\cK(2N-1, \del) \setminus \cN_\cK(N-1, \del)$ we may associate a hypercube $B$ of diameter $|B| \asymp \del/N$ centred in $\cK$, such that
\[
\sum_B \mu(B) \ll \sum_{N \le n < 2N} \mu(f_n).
\]
Using the $\kap_2$-regularity of $\mu$, we thus obtain
\begin{align*}
&\# \cN_\cK(2N-1, \del) - \# \cN_\cK(N-1, \del) \\ & \qquad \ll (\del/N)^{-\kap_2} (\del^k N + N^{k+\eps} (\del/N)^{\kap - \eps}) \\
& \qquad = \del^{k-\kap_2} N^{\kap_2 + 1} + \del^{\kap-\kap_2-\eps} N^{\kap_2 + k - \kap + 2\eps},
\end{align*}
and summing over $N$ yields
\[
\# \cN_\cK(Q, \del) \ll \del^{k-\kap_2} Q^{\kap_2 + 1} + \del^{\kap-\kap_2-\eps} Q^{\kap_2 + k - \kap + 2\eps}.
\]
Now taking $\eps, \eta$ small, we see from \eqref{WeakAssumption} that if $\del \gg Q^{-\eta-1/k}$ 
then $\# \cN_\cK(Q, \del) \ll \del^{k-\kap_2} Q^{\kap_2 + 1}$.

For the lower bound we use $f_n(\bx) = A_n^*(\bx; \bd)$ from \eqref{AnStar}, with bump function $w$ supported on $\{ 1/2 \le |x| \le 1 \}$, where
\[
\bd = (\del/Q, \ldots, \del/Q), \qquad
 \by = \bzero,
\]
for each $n \in [N,2N)$. Here $N = \lfloor (Q-1)/2 \rfloor$. By Theorem \ref{thm: ETP for general measure} and \eqref{WeakAssumption}, if $\eps > 0$ is a sufficiently small constant then
\[
\sum_{N \le n < 2N} \mu(f_n) \ge \eps \del^k N - \eps^{-1} N^{k+\eps} (\del/N)^{\kap - \eps} \gg \del^k Q.
\]
Finally, by \eqref{MeasureUpper},
\[
\# \cN_\cK(Q, \del) \gg (\del/Q)^{-\kap_2}  \del^k Q = \del^{k - \kap_2} Q^{\kap_2 + 1}.
\]
This completes the proof of Theorem \ref{thm: Rational Counting}.

\subsection{Proof of Theorem \ref{OtherBound}}

We take $\cK = \cK_{b,\cD}$ with
\[
\cD = \{ 0, 1, \ldots, b-1 \}^{k-1} \times \{ 0, 1, \ldots, a-1 \},
\]
for some positive integers $a < b$ to be chosen. We write
\[
\kap = \dim_{\ell^1}(\mu), \qquad \kap_2 = \Haus(\cK).
\] 
We may assume that $\eta < 1/100$. By Theorem \ref{thm: l1 bound}(b), if $b$ is sufficiently large then
\[
\kap_2 - \kap \le \frac{k \log (2 \log b)}{\log b} < \eta^3 =: \eps.
\]
By \eqref{HausdorffMissing}, if $b$ is sufficiently large then there exists an integer $a \in [1,b-1]$ such that
\[
k - \kap_2 = 1 - \frac{\log a}{\log b} \in (\eta/2,\eta).
\]
Now that we have provided the construction, we proceed to estimate $\# \cN_{\cK}(Q,0)$.

Let $\mu$ be the Cantor--Lebesgue measure of $\cK$.
With $\del \in (0,1/2]$ chosen in due course, we use the smooth function $f_n(\bx) = A_n^*(\bx; \bd)$ from \eqref{AnStar}, where
\[
\bd = (\del/N, \ldots, \del/N), \qquad
 \by = \bzero,
\]
for each $n \in [N,2N)$ and each $N \in \bN$ such that $N \le Q$ and $Q/N$ is a power of two. By Theorem \ref{thm: ETP for general measure}, 
\[
\sum_{N \le n < 2N} \mu(f_n) \ll \del^k N + N^{k+\eps} (\del/N)^{\kap - \eps}.
\]
Reasoning as in \S \ref{RationalCountingProof}, we obtain
\begin{align*}
\# \cN_{\cK}(Q, \del) &\ll
\del^{k-\kap_2}Q^{\kap_2+1} + \del^{\kap-\kap_2-\eps} Q^{\kap_2 + k - \kap + 2 \eps}.
\end{align*}
The choice $\delta = Q^{-2/\eta}$ delivers
$
\# \cN_{\cK}(Q, \del) \ll Q^{k+\eta},
$
which finishes the proof because
$
\cN_\cK(Q,0)\leq \cN_\cK(Q,\delta). 
$

\subsection{Proof of Theorem \ref{thm: Intrinsic App}}

Observe that
\[
W_k^\cK(\tau) = \limsup_{m \to \infty} A_m,
\]
where
\[
A_m = \bigcup_{\substack{\ba/n \in \cK \\ n \in D_m}} \prod_{j \le k}
\left(
\frac{a_j - n^{-\tau}}{n},
\frac{a_j + n^{-\tau}}{n}
\right) \qquad (m \in \bN).
\]
Put $\kap_2 = \Haus(\cK)$. By Corollary \ref{CountingRationalPoints} and the $\kap_2$-regularity of $\mu$, there exists a constant $\eta > 0$ such that
\[
\mu(A_m) \ll (2^m)^{(1+1/k) \kap_2 -\eta - (1 + \tau) \kap_2}.
\]
If $\tau$ is sufficiently close to $1/k$, then
$
(1+1/k)\kap_2 -\eta - (1 + \tau)\kap_2
< 0,
$
so
\[
\sum_{m=1}^\infty \mu(A_m) < \infty.
\]
By the first Borel--Cantelli lemma, we now have $\mu(W_k^\cK(\tau)) = 0$. This completes the proof of Theorem \ref{thm: Intrinsic App}.

\appendix

\section{More about the Fourier \texorpdfstring{$\ell^1$}{l1} dimension}
\label{sec: morel1}

In this appendix, we further discuss Fourier $\ell^1$ dimension for missing-digit measures.

\subsection{The Fourier \texorpdfstring{$\ell^1$}{l1} dimension in special cases}
\label{sec: special}

In a natural, wide class of special cases, we can rigorously estimate the Fourier $\ell^1$ dimension. Building on \cite[Theorem 2.6]{CVY}, the following estimates were demonstrated in \cite[Theorem 2.15]{YuManifold}. 

\begin{thm}
[Yu \cite{YuManifold}]
\label{thm: l1 bound}
Let $k \in \bN$, and let $\mu_{b,\cD}$ be the Cantor--Lebesgue measure of a missing-digit fractal $\cK_{b,\cD}$ on $[0,1]^k$.

\begin{enumerate}[(a)]
\item Let $t \in \bN$. Then
\[
\liminf_{\substack{b\to\infty \\ \#\cD \geq b^k-t}} \dim_{\ell^1}(\mu_{b,\cD}) = k.
\]
In particular, if $\eps > 0$ and $b$ is sufficiently large, then 
\[
\dim_{\ell^1}(\mu_{b,D}) > k-\epsilon
\]
holds whenever 
$\#\cD = b^k -1.$ 
\item Let $b\geq 4$ be an integer, and suppose $\cD$ has the form
\[
\cD = [a_1,b_1] \times [a_2,b_2] \times \cdots \times [a_k,b_k] \cap 
\{0,\dots,b-1\}^k.
\]
Then 
\[
\dim_{\ell^1} (\mu_{b,\mathcal{D}}) \geq \Haus(\cK_{b,\cD}) -\frac{k\log (2\log b)}{\log b}.
\]
\end{enumerate}
\end{thm}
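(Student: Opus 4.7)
The plan is to treat the two parts separately, exploiting in each case the explicit product formula
\[
\hat{\mu}_{b,\cD}(\bxi) = \prod_{j=1}^\infty P_{\cD}(\bxi/b^j), \qquad P_{\cD}(\by) = \frac{1}{\#\cD} \sum_{\bd \in \cD} e(-\by \cdot \bd),
\]
which comes from the self-similar structure of $\mu_{b,\cD}$.

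For part (b), I would first observe that a product digit set yields a product measure: if $\cD = \prod_i \bigl([a_i,b_i] \cap \{0,\ldots,b-1\}\bigr)$, then $\mu_{b,\cD} = \mu_1 \times \cdots \times \mu_k$, where each $\mu_i$ is a one-dimensional missing-digit measure with contiguous digit set. For product measures the Fourier coefficients factor, so
\[
\sum_{\|\bxi\|_\infty \le Q} |\hat{\mu}_{b,\cD}(\bxi)|
= \prod_{i=1}^k \sum_{|\xi_i| \le Q} |\hat{\mu}_i(\xi_i)|,
\]
whence $\dim_{\ell^1}(\mu_{b,\cD}) \ge \sum_i \dim_{\ell^1}(\mu_i)$. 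Since also $\Haus(\cK_{b,\cD}) = \sum_i \Haus(\supp \mu_i)$, it suffices to prove the one-dimensional bound $\dim_{\ell^1}(\mu_i) \ge \Haus(\supp \mu_i) - \log(2 \log b)/\log b$. For a contiguous digit set of size $N_i$, the factor $P_{\cD_i}$ is an explicit Dirichlet-type kernel, and I would bound $\sum_{|\xi| \le Q} \prod_{j \le J} |P_{\cD_i}(\xi/b^j)|$ (with $J \asymp \log_b Q$) by organising $\xi$ according to its base-$b$ digit expansion and comparing with a tree whose branching factor is $N_i$. The loss of $\log(2\log b)/\log b$ should enter through the $L^1$-normalisation constants for the Dirichlet kernels at each scale.

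For part (a), the strategy is perturbative. Write $\bP$ for the uniform measure on $\cD$ and $\bP_0$ for the uniform measure on the full digit box $\{0, \ldots, b-1\}^k$. Since $\#\cD \ge b^k - t$, we have $\|P_{\cD} - P_{\bP_0}\|_\infty = O(t/b^k)$. Now $P_{\bP_0}$ factors as $\prod_j \frac{1}{b}\frac{1 - e(-\xi_j)}{1 - e(-\xi_j/b)}$, and the infinite product $\prod_{j \ge 1} P_{\bP_0}(\bxi/b^j)$ equals $\ind[\bxi = \bzero]$ because it represents Lebesgue measure on $[0,1]^k$. Substituting $P_{\cD} = P_{\bP_0} + O(t/b^k)$ into the product expansion for $\hat\mu_{b,\cD}(\bxi)$ and expanding term-by-term, one gets, for any $\bxi \ne \bzero$ with $\log_b\|\bxi\|_\infty \asymp J$, an estimate of the form
\[
|\hat\mu_{b,\cD}(\bxi)| \ll \sum_{S \subseteq \{1, \ldots, J\}, S \ne \emptyset} (t/b^k)^{|S|} \prod_{j \notin S} |P_{\bP_0}(\bxi/b^j)|.
\]
Summing over $\bxi$ with $\|\bxi\|_\infty \le Q$ and using that $P_{\bP_0}$ has small $\ell^1$-norm on annular dyadic shells, one obtains $\sum_{\|\bxi\|_\infty \le Q} |\hat\mu(\bxi)| \ll_\eps Q^\eps$ once $b$ is large enough in terms of $t$ and $\eps$, which yields $\dim_{\ell^1}(\mu_{b,\cD}) \ge k - \eps$.

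The main obstacle in both parts is controlling the multiplicative build-up across scales. Unlike the $\ell^2$ theory, where orthogonality handles cross-terms cleanly, the $\ell^1$ sum has no such cancellation, so one must carefully track the sizes of $|P_{\cD}(\bxi/b^j)|$ on every scale simultaneously. In part (b) the difficulty is to extract the near-optimal constant $\log(2\log b)/\log b$, which forces one to use more than just the trivial bound $|P| \le 1$; a refined Fejér/Dirichlet kernel estimate relating the average of $|P|$ on residue classes to the digit count $N_i/b$ is needed. In part (a), the obstacle is ensuring that the error terms of size $O(t/b^k)$, compounded over $J \asymp \log_b Q$ scales, do not blow up; this is where the requirement that $b$ be large relative to $t$ and $\eps$ really enters.
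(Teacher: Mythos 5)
This theorem is stated in the paper as a citation of \cite[Theorem 2.15]{YuManifold} (building on \cite[Theorem 2.6]{CVY}); the paper contains no proof of it, so there is no in-paper argument to compare yours against line by line. The closest in-paper material is Theorem \ref{l1lower}, the single-scale transfer bound
\[
\dim_{\ell^1}(\mu) \ge \frac{-\log\bigl(\sup_{\bx} b^{-k}\sum_{\bi} g(\bx+\bi/b)\bigr)}{\log b},
\]
and your sketch is, in substance, the multi-scale unwinding of exactly that criterion. Your part (b) reduction (product digit set $\Rightarrow$ product measure $\Rightarrow$ the $\ell^1$ sums over boxes factorise, so it suffices to treat $k=1$ with a contiguous digit set) is correct and is how the constant $k\log(2\log b)/\log b$ arises: one applies the $L=1$ criterion to the Dirichlet kernel $g(y)=\frac1N\bigl|\sin(\pi Ny)/\sin(\pi y)\bigr|$ and uses $|D_N(y)|\le\min\{N,(2\|y\|)^{-1}\}$ to get $\sup_x b^{-1}\sum_{i<b} g(x+i/b)\le (N/b+\log b+O(1))/N\le 2\log b/N$ for $b\ge4$; this is precisely the ``refined Fej\'er/Dirichlet estimate'' you name but do not carry out, and it is the only genuinely quantitative step. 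Your part (a) is also workable, but the subset expansion over $S\subseteq\{1,\dots,J\}$ is an unnecessarily heavy route: since $|P_{\cD}|\le|P_{\bP_0}|+O(t/b^k)$ pointwise, one can feed this directly into Theorem \ref{l1lower} with $L=1$ to get $\sup_{\bx} b^{-k}\sum_{\bi}|P_\cD(\bx+\bi/b)|\le(2\log b/b)^k+O(t/b^k)$, whence $\dim_{\ell^1}(\mu_{b,\cD})\ge k-\log\bigl((2\log b)^k+O(t)\bigr)/\log b\to k$; this avoids having to justify the cross-scale summation of $\prod_{j\notin S}|P_{\bP_0}(\bxi/b^j)|$, which in your version is the one step that is asserted (``small $\ell^1$-norm on dyadic shells'') rather than proved and really requires the same tree/induction bookkeeping as Theorem \ref{l1lower}. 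In short: the strategy is right and consistent with the cited proofs, but as written the two load-bearing estimates are placeholders.
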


\subsection{Estimating the Fourier \texorpdfstring{$\ell^1$}{l1} dimension}
\label{sec: compute}

Here we provide a general algorithm to estimate the Fourier $\ell^1$ dimension of a missing-digit measure $\mu = \mu_{b, \bP}$ on $[0,1]^k.$ The strategy comes from \cite{CVY}. For $\bx \in \bR^k$, define
\[
g(\bx) =  \left| \sum_{\bd \in \{0,1,\ldots,b-1\}^k} \bP(\bd) e(\bd \cdot \bx) \right|.
\]
Since the Fourier transform of the distribution of a sum of independent random variables is the product of the Fourier transforms of the individual distributions, we have
\begin{equation}
\label{ProductFormula}
|\hat \mu(\bx)| = \prod_{j=1}^\infty g(b^{-j} \bx).
\end{equation}
For $L \in \bN$ and $\bx \in \bR^k$, define
\[
S_L(\bx) = \prod_{j=0}^{L-1} g(b^j \bx).
\]

\begin{thm} 
\label{l1lower}
If $L \in \bN$ then
\[
\dim_{\ell^1}(\mu) \ge \frac{-\log \left(
\displaystyle \sup_\bx b^{-kL} \sum_{i_1, \ldots, i_k = 0}^{b^L-1} S_L\left(\bx + \frac{\bi}{b^L} \right)
\right)}{\log (b^L)}.
\]
\end{thm}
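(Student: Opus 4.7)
The plan is to set up an inductive bound for the sum $\sum_{\|\bxi\|_\infty \le Q} |\hat\mu(\bxi)|$ by extracting $L$ factors at a time from the product formula \eqref{ProductFormula}. The key observation is that $g$ is $\bZ^k$-periodic, since $g(\bx + \bn) = \bigl|\sum_\bd \bP(\bd) e(\bd \cdot (\bx + \bn))\bigr| = g(\bx)$ whenever $\bn \in \bZ^k$. Extending $|\hat\mu|$ to all of $\bR^k$ via
\[
F(\bx) := \prod_{j=1}^\infty g(b^{-j}\bx),
\]
which agrees with $|\hat\mu(\bxi)|$ on $\bZ^k$ and satisfies $0 \le F \le 1$, I would introduce the shifted maximal sum
\[
U_N := \sup_{\by \in \bR^k} \sum_{\substack{\bxi \in \bZ^k \\ \|\bxi\|_\infty \le b^{NL}}} F(\bxi + \by),
\]
which clearly dominates $\sum_{\|\bxi\|_\infty \le b^{NL}} |\hat\mu(\bxi)|$.

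The core step is the recursion $U_N \le b^{kL} M \cdot U_{N-1}$, where $M$ denotes the quantity inside the $-\log$ in the theorem. For each $\bxi$ with $\|\bxi\|_\infty \le b^{NL}$, write uniquely $\bxi = b^L \bxi' + \bi$ with $\bxi' \in \bZ^k$, $\|\bxi'\|_\infty \le b^{(N-1)L} + O(1)$, and $\bi \in \{0, 1, \ldots, b^L-1\}^k$. Splitting $F(\bxi + \by)$ into its first $L$ factors and the remaining tail, the periodicity of $g$ collapses each of the first $L$ factors (since $b^{L-j}\bxi' \in \bZ^k$ for $1 \le j \le L$), yielding
\[
F(\bxi + \by) = S_L\bigl(b^{-L}(\bi + \by)\bigr) \cdot F\bigl(\bxi' + b^{-L}(\bi + \by)\bigr).
\]
Summing over $\bxi'$ first, the tail contributes at most $U_{N-1}$ uniformly in $\bi$ and $\by$; summing then over $\bi$, the first factor contributes at most
\[
\sup_{\by} \sum_{\bi \in \{0,\ldots,b^L-1\}^k} S_L\bigl(b^{-L}\by + \bi/b^L\bigr) = b^{kL} M,
\]
as $\bx := b^{-L}\by$ ranges over $\bR^k$. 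This establishes the recursion.

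Iterating from the trivial base case $U_0 = O(1)$ gives $U_N \ll (b^{kL} M)^N$ uniformly in $N$. For general $Q \ge 1$, choose $N$ so that $b^{(N-1)L} \le Q \le b^{NL}$; then, with $s := -\log M / \log(b^L)$,
\[
\sum_{\|\bxi\|_\infty \le Q} |\hat\mu(\bxi)| \le U_N \ll Q^k M^N \asymp Q^{k - s},
\]
which gives $\dim_{\ell^1}(\mu) \ge s$, as claimed. No serious obstacle arises in the argument; the only part requiring a little care is the algebraic reshuffling that converts the first $L$ factors into $S_L$ and repackages the tail as a shifted, rescaled copy of $F$. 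This is precisely where both the $\bZ^k$-periodicity of $g$ and the self-similarity inherent in \eqref{ProductFormula} enter, and both steps are routine once the extension $F$ is in hand.
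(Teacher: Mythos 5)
Your proposal is correct and follows essentially the same route as the paper: both arguments extend $|\hat\mu|$ to real arguments via the product formula, introduce the shifted maximal sum $\sup_\by \sum_\bxi |\hat\mu(\bxi+\by)|$, and run an induction on $N$ using the digit decomposition $\bxi = b^L\bxi' + \bi$ to peel off the first $L$ factors as $S_L$ and repackage the tail as a rescaled shift. The only (cosmetic) difference is that the paper first reduces to $L=1$ by passing from base $b$ to base $b^L$, whereas you carry general $L$ through the induction directly.
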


\begin{proof} We may assume without loss of generality that $L = 1$. Indeed, if $L \ge 2$, then we can replace $b$ by $b^L$ and $\bP$ by the distribution of
\[
\sum_{j=0}^{L-1} \bd^{(j)} b^j,
\]
where the $\bd^{(j)}$ are independent random variables distributed according to $\bP$. This leaves $\mu$ unchanged, and replaces $S_L$ by $S_1 = g$.

It now suffices to prove that
\[
\sum_{\| \bxi \|_\infty \le Q} |\hat \mu(\bxi)| \ll Q^{k-s} \qquad (Q \in \bN),
\]
where 
\[
s = \frac{-\log \left(
\displaystyle \sup_\bx b^{-k} \sum_{i_1, \ldots, i_k = 0}^{b-1} g\left(\bx + \frac{\bi}{b} \right)
\right)}{\log b}.
\]
Furthermore, we may assume that $Q = b^N - 1$ for some $N \in \bN$. Thus, it remains to show that
\[
\sum_{\| \bxi \|_\infty < b^N} |\hat \mu(\bxi)| \ll \left(
\displaystyle \sup_\bx \sum_{i_1, \ldots, i_k = 0}^{b-1} g\left(\bx + \frac{\bi}{b} \right) \right)^N.
\]

Arguing by induction, we establish the sharper and more general inequality
\[
\displaystyle \sup_\by \sum_{\| \bxi \|_\infty < b^N} |\hat \mu(\by + \bxi)| \le \left( \sup_\bx \sum_{i_1, \ldots, i_k = 0}^{b-1} g\left(\bx + \frac{\bi}{b} \right) \right)^N,
\]
for every integer $N \ge 0$.
The case $N = 0$ is simply the fact that $|\hat \mu(\by)| \le 1$ for all $\by$. 

Let us now take $N \ge 1$, and assume the inequality with $N-1$ in place of $N$. By \eqref{ProductFormula},
\[
|\hat \mu(\bx)| = g(b^{-1}\bx) |\hat \mu(b^{-1}\bx)| \qquad (\bx \in \bR^k).
\]
Writing $\bxi = \bxi^{(1)} + b \bxi^{(2)}$ now gives
\begin{align*}
&\sum_{\| \bxi \|_\infty < b^N} |\hat \mu(\by + \bxi)| \\ &= \sum_{\| \bxi^{(1)} \|_\infty < b} g(b^{-1} \by + b^{-1} \bxi^{(1)}) \sum_{\| \bxi^{(2)} \|_\infty < b^{N-1}} |\hat \mu(b^{-1} \by + b^{-1} \bxi^{(1)} + \bxi^{(2)})|.
\end{align*}
Thus, by the inductive hypothesis,
\begin{align*}
&\sum_{\| \bxi \|_\infty < b^N} |\hat \mu(\by + \bxi)| \\ &\le \sum_{\| \bxi^{(1)} \|_\infty < b} g(b^{-1} \by + b^{-1} \bxi^{(1)}) \left( \sup_\bx \sum_{i_1, \ldots, i_k = 0}^{b-1} g\left(\bx + \frac{\bi}{b} \right) \right)^{N-1} \\
&\le \left( \sup_\bx \sum_{i_1, \ldots, i_k = 0}^{b-1} g\left(\bx + \frac{\bi}{b} \right) \right)^N.
\end{align*}
This closes the induction and completes the proof.
\end{proof}

We expect that replacing the maximum with a minimum would give rise to an upper bound. However, we also expect that a proof of this would be technical and lengthy. Moreover, the lower bound is what is needed for applications. For these reasons, we do not establish the upper bound.

Note by periodicity that the lower bound in Theorem \ref{l1lower} is effectively computable.

\bibliographystyle{amsplain}

\end{document}